\DeclareFontFamily{U}{euf}{}
\DeclareFontShape{U}{euf}{m}{n}{%
  <5><6><7><8><9>gen*eufm%
  <10><10.95><12><14.4><17.28><20.74><24.88>eufm10%
  }{}
\DeclareFontShape{U}{euf}{b}{n}{%
  <5><6><7><8><9>gen*eufb%
  <10><10.95><12><14.4><17.28><20.74><24.88>eufb10%
  }{}
\DeclareFontFamily{U}{msb}{}
\DeclareFontShape{U}{msb}{m}{n}{%
  <5><6><7><8><9>gen*msbm%
  <10><10.95><12><14.4><17.28><20.74><24.88>msbm10%
  }{}
\DeclareFontFamily{U}{msa}{}
\DeclareFontShape{U}{msa}{m}{n}{%
  <5><6><7><8><9>gen*msam%
  <10><10.95><12><14.4><17.28><20.74><24.88>msam10%
  }{}
\newtheorem{theorem}{Theorem}[section]
\newtheorem{lemma}[theorem]{Lemma}
\newtheorem{proposition}[theorem]{Proposition}
\newtheorem{corollary}[theorem]{Corollary}
\theoremstyle{definition}
\newtheorem{definition}[theorem]{Definition}
\newtheorem{remark}[theorem]{Remark}
\numberwithin{equation}{section}
\begin{document}

\title[A question of S\'{a}rkozy and S\'{o}s on representation functions] {A question of S\'{a}rkozy and S\'{o}s on representation functions}

\author{Yan Li and Lianrong Ma}

\address{Department of Applied Mathematics, China Agriculture University, Beijing 100083, China}
\email{liyan\_00@mails.tsinghua.edu.cn}
\address{Department of
Mathematics, Tsinghua University, Beijing 100084, China}
\email{lrma@math.tsingua.edu.cn}

\subjclass[2010]{11B34} \keywords{Additive representation
functions; Generating functions; Fractional power series;
logarithmic derivatives; Cyclotomic polynomials.}

\begin{abstract}For $m\geq 1$, let $0<b_0<b_1<\cdots <b_m$ and $\ e_0,e_1,\cdots£¬e_m>0$ be
fixed positive integers. Assume there exists a prime $p$ and an integer
$t>0$ such that $p^t\mid b_0$, but $p^t\nmid b_{i}\ {\rm for}\ 1\leq
i\leq m$.
 Then, we prove that there is no
infinite subset $\mathcal A$ of positive integers, such that the
number of solutions of the following equation
$$n=b_0(a_{0,1}+\cdot +a_{0,e_0})+\cdots +b_m(a_{m,1}+\cdots +a_{m,r_m}),\  a_{i,j}\in \mathcal A$$
is constant for $n$ large enough. This result generalizes the recent
result of Cilleruelo and Ru\'{e} for the bilinear case,
 and answers a question posed by S\'{a}rkozy and S\'{o}s.
\end{abstract}

\maketitle

\def\C{\mathbb C_p}
\def\BZ{\mathbb Z}
\def\Z{\mathbb Z_p}
\def\Q{\mathbb Q_p}
\def\C{\mathbb C_p}
\def\BZ{\mathbb Z}
\def\Z{\mathbb Z_p}
\def\Q{\mathbb Q_p}
\def\psum{\sideset{}{^{(p)}}\sum}
\def\pprod{\sideset{}{^{(p)}}\prod}

\section{Introduction}
Given an infinite subset $\mathcal A$ of positive integers $\mathbb N $, the representation function $r(n,\mathcal A)$ is defined as
$$r(n,\mathcal A)=\#\{(a,a')|n=a+a',\ a,a'\in \mathcal A\}.$$
This function was initially studied by Erd\"{o}s and Tur\'{a}n
~\cite{Erdos}. In that paper, they made the following important conjecture.

\textbf{Conjecture}: (Erd\"{o}s and Tur\'{a}n): If $\mathcal
A\subseteq \mathbb N$ and $r(n,\mathcal A)>0$ for $n>n_0$ (i.e.,
$\mathcal A$ is an asymptotic basis of order 2), then $r(n,\mathcal
A)$ cannot be bounded.

As an evidence, Erd\"{o}s and Tur\'{a}n ~\cite{Erdos} found, by means of
analytic arguments, that $r(n, A)$ cannot be constant for $n$ large
enough.

Dirac ~\cite{Dirac} showed an elementary proof also exists: obviously,
$r(n,\mathcal A)$ is odd when $n=2a,\ a\in \mathcal A$, and even,
otherwise. Moreover, by using the technique of generating functions,
he gave a short and elegant proof that
$$r^{+}(n,\mathcal A)=\#\{(a,a')|n=a+a',\ a,a'\in \mathcal A,\ a\le a'\}$$ cannot be constant, either.

Ruzsa made an surprising example which shows that the above
conjecture does not hold if one replaces $a+a'$ with $a+2a'$.

\textbf{Example of Ruzsa}: Let $$\mathcal A=\{a:a=\sum \limits
_{i=0}^{+\infty}\varepsilon _i2^{2i},\varepsilon _i=0\ {\rm or}\
1\}.$$ Then, for $n\in \mathbb N$, the representation function
$$r_{1,2}(n,\mathcal A)=\#\{(a,a')|n=a+2a',a,a'\in \mathcal A\}$$ is always 1.

Replacing base 2 by base $k$, Ruzsa's example still works, see
earlier arguments of Moser~\cite{Moser}, whose approach is also through
generating functions.

More generally, S\'{a}rkozy and S\'{o}s \cite{Sarkozy} asked the following
question on the representation function of multi-linear forms.

\textbf{Question}: For which $(c_1,\cdots ,c_k)$, can the
representation function
$$r_{c_1,\cdots ,c_k}(n,\mathcal A)=\{(a_1,\cdots ,a_k)|c_1a_1+\cdots +c_ka_k=n,\ a_1,\cdots ,a_k\in \mathcal A\}$$ be constant for $n$ large enough.

Recently, Cilleruelo and Ru\'{e} ~\cite{Cilleruelo} gave a partial answer to
the above question.

\textbf{Theorem} (Cilleruelo and Ru\'{e}): Let $1<c_1<c_2$ and ${\rm
gcd}(c_1,c_2)=1$. There is no infinite subset $\mathcal A$ of
positive integers such that $r_{c_1,c_2} (n, \mathcal A)$ is
constant for $n$ large enough.

Combining the earlier work of Moser ~\cite{Moser}, they completely solved
S\"{a}rkozy and S\'{o}s's question for bilinear forms.

Every multilinear form $c_1x_1+\cdots +c_kx_k$ can be uniquely written as
$$b_0(x_{0,1}+\cdots +x_{0,e_0})+b_1(x_{1,1}+\cdots +x_{1,e_1})+\cdots +b_m(x_{m,1}+\cdots +\cdots +x_{m,e_m})$$ with
 $$0<b_0<b_1<\cdots <b_m\ {\rm and}\ e_0,e_1,\cdots£¬e_m>0.$$ We call
$e_0,e_1,\cdots£¬e_m$ the multiplicities of $b_0,b_1\cdots ,b_m$,
respectively. Denote $${\mathrm M}=\{(b_0,e_0),\cdots ,(b_m,e_m)\}.$$
The representation function of $n$ with respect to ${\mathrm M}$, or equivalently with respect to the multi-linear form $c_1x_1+\cdots +c_kx_k$,
 is
the number of solutions of the equation
$$n=b_0(a_{0,1}+\cdot +a_{0,e_0})+\cdots +b_m(a_{m,1}+\cdots +a_{m,r_m})$$
with $a_{i,j}\in \mathcal A$. We denote this value by $r_{\mathrm
M}(n,\mathcal A)$.

The main result of Ru\'{e}~\cite{Rue} implied that $r_{\mathrm
M}(n,\mathcal A)$ cannot be constant for $n$ large enough if ${\rm
gcd }(e_0,\cdots ,e_m)\ge 2$. The main tool which he used in the proof is
from analytic combinatorics.

\begin{remark}From Ru\'{e}'s~\cite{Rue} result, the case of $m=0$ is clear. That is,
for $m=0$, $r_{\mathrm M}(n,\mathcal A)$ can be constant for $n$
large enough if and only if ${\rm M}=\{(1,1)\}$. So from now on, we will always
assume $m\geq 1$ unless specification.
\end{remark}
In this paper, we will prove the following theorem .

\begin {theorem}\label{th1new} Let $m\ge 1$. Assume there exists a prime $p$ and a positive integer $t$ such that
$p^t\mid b_0,\ {\rm but}\ p^t\nmid b_{i}\ {\rm for}\ 1\leq i\leq m.
$
 Then, for any $\mathcal A\subseteq \mathbb N$,
$r_{\mathrm M}(n,\mathcal A)$ cannot be constant for $n$ large
enough.
\end{theorem}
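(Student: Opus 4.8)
The natural approach is via generating functions, reducing the statement to an identity of formal power series and then exploiting $p$-adic analysis, mirroring the strategy of Cilleruelo and Ru\'{e} but adapted to the multilinear setting. Suppose, for contradiction, that $\mathcal A\subseteq\mathbb N$ is infinite and $r_{\mathrm M}(n,\mathcal A)=C$ for all $n\ge N_0$. Let $f(x)=\sum_{a\in\mathcal A}x^{a}$ be the (formal, or convergent on the open unit disk) generating function of $\mathcal A$. Counting solutions of $n=b_0(a_{0,1}+\cdots+a_{0,e_0})+\cdots+b_m(a_{m,1}+\cdots+a_{m,e_m})$ shows that
$$\prod_{i=0}^{m} f\bigl(x^{b_i}\bigr)^{e_i} \;=\; P(x) \;+\; \frac{C\,x^{M_0}}{1-x}$$
for some polynomial $P(x)\in\mathbb Z[x]$ accounting for the finitely many small $n$, where $M_0$ is the least $n$ with $r_{\mathrm M}(n,\mathcal A)\ne 0$ from which things become constant. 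So $\prod_i f(x^{b_i})^{e_i}$ is a rational function with a single (simple) pole at $x=1$ on the unit circle.

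The next step is to pass to the field $\mathbb C_p$ of $p$-adic complex numbers and study the behaviour of $f$ near an appropriate root of unity. Write $b_i = p^{t_i} b_i'$ with $p\nmid b_i'$; by hypothesis $t_0\ge t$ while $t_i<t$ for $1\le i\le m$ — actually the cleanest hypothesis to use is just $t_0\ge t > t_i$ is not quite given, so I would instead argue directly with a primitive $p^{t}$-th root of unity $\zeta$. Substituting $x\to \zeta x$ (or evaluating logarithmic derivatives) one sees that $f(x^{b_i})$ for $i\ge 1$ is essentially unaffected in its order of vanishing/pole structure at $x=1$ relative to the factor coming from $i=0$, because $b_i$ is not divisible by $p^t$ while $b_0$ is. The key is to take the logarithmic derivative $\frac{d}{dx}\log\bigl(\prod_i f(x^{b_i})^{e_i}\bigr) = \sum_i e_i b_i x^{b_i-1} \frac{f'(x^{b_i})}{f(x^{b_i})}$ and compare the orders of vanishing/poles at the various $p^{t}$-th roots of unity. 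Since the right-hand side $P(x)+Cx^{M_0}/(1-x)$ is rational, its logarithmic derivative has only simple poles with integer residues at roots of unity; one extracts a contradiction by showing the left side must have a pole of non-integer "fractional" order — this is where fractional power series enter: near $x=1$, $f(x^{b_0})$ behaves like a power of $(1-x)$ with exponent forced by the pole of the product, and dividing by the $b_0$ (a multiple of $p^{t}$) while the other $b_i$ are not produces an exponent that is not a $p$-adic integer, contradicting rationality.

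More precisely, the plan is: (i) from the rational form, deduce that $\prod_i f(x^{b_i})^{e_i}$ extends to a meromorphic-like object on a $p$-adic disk with controlled zeros; (ii) localize at $x=\zeta$ for $\zeta$ a primitive $p^{t}$-th root of unity in $\mathbb C_p$, noting $\zeta^{b_i}$ is a primitive $p^{t-t_i}$-th root of unity, hence $\zeta^{b_i}=1$ iff $p^{t}\mid b_i$, i.e. only for $i=0$; (iii) this means that at $x=\zeta$ only the factor $f(x^{b_0})^{e_0}$ can contribute the pole structure tied to the pole of the whole product at $x=1$, forcing a relation $v_\zeta$ of the order of the product at $\zeta$ to be $e_0$ times something, while the global rationality forces it to be an honest integer independent in a way that clashes — the arithmetic of dividing the residue/order by $b_0$ versus by the $b_i$ yields the incompatibility. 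I would formalize step (iii) with the $p$-adic logarithm: set $g(x)=\log f(x)$ as a fractional power series, use $\sum_i e_i g(x^{b_i})$ is "log of rational", differentiate, and track valuations.

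**Main obstacle.** The hard part will be step (iii): making rigorous the claim that the contributions of the factors $f(x^{b_i})$ with $1\le i\le m$ at the $p^{t}$-th roots of unity cannot conspire to cancel the anomalous fractional exponent produced by $f(x^{b_0})$. Because $m\ge 1$, there are genuinely several factors, and a priori $f(x^{b_i})$ could vanish or blow up at $\zeta$ for $i\ge 1$ in a way that compensates. Controlling this requires a careful induction on $t$ (or on the $p$-adic valuation structure of $\mathcal A$ itself), using that $\zeta^{b_i}$ is a root of unity of strictly smaller $p$-power order, so the relevant local analysis for $i\ge 1$ happens at a "lower level" and can be handled by the inductive hypothesis or by a separate rationality/integrality argument; organizing this descent cleanly, and setting up the right notion of fractional power series and their logarithmic derivatives over $\mathbb C_p$ so that "order at $\zeta$" is well-defined and additive, is the technical crux.
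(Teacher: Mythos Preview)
Your proposal has a genuine gap at its core: you never explain how to make sense of ``the order of $f$ at $\zeta$'' for a primitive $p^{t}$-th root of unity $\zeta$. The series $f(x)=\sum_{a\in\mathcal A}x^{a}$ has coefficients in $\{0,1\}$ and $\mathcal A$ is infinite, so $f$ diverges at every point with $|x|=1$, both in $\mathbb C$ and in $\mathbb C_p$ (in the latter, $|\zeta|_p=1$ for any root of unity). Thus ``localizing at $\zeta$'', ``order of vanishing at $\zeta$'', or ``$f(x^{b_0})$ behaves like a power of $(1-x)$ near $x=1$'' are all heuristics with no obvious rigorous meaning for a transcendental $f$. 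Your step (iii) presupposes exactly this undefined notion, and your own ``main obstacle'' paragraph does not address it; it worries about cancellation among the $i\ge 1$ factors, but the more basic problem is that none of the individual orders are defined to begin with. The induction on $t$ you sketch would need, at each level, a well-defined local invariant to descend on, and you do not have one.

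The paper's argument is quite different in spirit and avoids this problem entirely. It never evaluates at roots of unity. Instead it enlarges the ring of formal power series to a ring $\mathbb C[[x^{\Lambda}]]$ of series $\sum_{\lambda\in\Lambda}c_\lambda x^{\lambda}$ where $\Lambda\subset\mathbb R_{\ge 0}$ is a discrete additive monoid generated by $1/b_0$ and the ratios $\theta_i=b_i/b_0$. In this ring the functional equation $\prod_i f(x^{b_i})^{e_i}=P(x)/(1-x)$ can be solved explicitly by substituting $x\mapsto x^{1/b_0}$ and iterating, giving an infinite product formula for $f$. One then takes the (formal) logarithmic derivative $xf'/f$ and reads off exactly which coefficients $c_\lambda$ with $\lambda\notin\mathbb N$ must vanish for $f$ to lie in $\mathbb C[[x]]$. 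These vanishing conditions isolate the contribution of roots of unity among the zeros of $P$, reduce to a ``cyclotomic part'' $H(x)$, and the hypothesis $p^{t}\mid b_0$, $p^{t}\nmid b_i$ is used to show the corresponding solution $g$ is a \emph{finite} product $\prod_d(1-x^d)^{g_d}$. The final contradiction is then purely algebraic, via cyclotomic factorizations and Gauss's Lemma, not via any local pole analysis.
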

 Note that the conditions of Theorem \ref{th1new} include the case: $b_0,b_1,\cdots ,b_m$ are pairwise coprime.
Therefore, the above theorem generalizes the theorem of Cilleruelo and
Ru\'{e}~\cite{Cilleruelo} from bilinear forms to multi-linear forms.

It should be noted that our method is different from theirs. For
example, the approach of ~\cite{Cilleruelo} makes use of complex analysis, but
ours is purely algebraic. The new ingredients in our proof are
fractional power series and their logarithmic derivatives.

\section{The idea of proof: Translation of the problem into generating functions}
For every set $\mathcal A$ of non-negative integers, the generating function of $\mathcal A$ is the formal power series $f_{\mathcal A}(x)$
defined as $$f_{\mathcal A}(x)=\sum \limits _{a\in \mathcal A}x^a.$$
In this way, the subsets of non-negative integers are in one to one correspondence to formal power series with coefficients 0 or 1.

The power series
$f_{\mathcal A}(x)$ also defines an analytic function around $x=0$. Indeed, if $\mathcal A$ is finite, then $f_{\mathcal A}(x)$ is a polynomial.
Otherwise,  $f_{\mathcal A}(x)$ has radius of convergence $r=1$ at $x=0$.

We now translate the combinatorial problem in the language of
generating functions. Let $\mathcal A$ be a subset of non-negative
integers and $${\mathrm M}=\{(b_0,e_0),(b_1,e_1),\cdots
,(b_m,e_m)\}.$$ The following equation is fundamental: \vskip 0.1cm
\begin{equation}\label{I}
\begin{aligned}
&f_{\mathcal A}(x^{b_0})^{e_0}f_{\mathcal A}(x^{b_1})^{e_1}\cdots f_{\mathcal A}(x^{b_m})^{e_m}\\
=&\sum \limits  _{a_{i,j}\in \mathcal A}x^{b_0(a_{0,1}+\cdots +a_{0,e_0})+\cdots +b_m(a_{m,1}+\cdots +a_{m,e_m})}\\
=&\sum \limits _{n=0}^{+\infty }r_{{\mathrm M}}(n,\mathcal A)x^n.\\
\end{aligned}
\end{equation}

Assume $r_{\mathrm M}(n,\mathcal A)=c \ne 0$ for $n>n_0$. Then
\begin{equation}\label{II}
\begin{aligned}
\sum \limits _{n=0}^{+\infty}r_{\mathrm M}(n,\mathcal A)x^n&=\sum \limits _{n=0}^{n_0}r_{\mathrm M}(n,\mathcal A)x^n+\sum \limits _{n=n_0+1}^{+\infty}cx^n\\
&=\sum \limits _{n=0}^{n_0}r_{\mathrm M}(n,\mathcal A)x^n+\frac {\displaystyle cx^{n_0+1}}{\displaystyle 1-x}=\frac
{\displaystyle P(x)}{\displaystyle 1-x},\\ \end{aligned}
\end{equation}
where $P(x)$ is a polynomial in $\mathbb Z[x]$ with
$P(1)\ne 0.$ Notice that $P(1)\ne 0$ is equivalent to $c\ne 0$.

Combining (\ref {I}) and (\ref {II}), we see that if $r_{\mathrm M}(n,\mathcal A)=c \ne 0$ for $n$ large enough, then
$f(x)=f_{\mathcal A}(x)$ is a solution in $\mathbb Z[[x]]$ of the
following equation
\begin{equation}\label{III}
f(x^{b_0})^{e_0}f(x^{b_1})^{e_1}\cdots f(x^{b_m})^{e_m}=\frac
{\displaystyle P(x)}{\displaystyle 1-x}
\end {equation}
for some $P(x)\in \mathbb Z[x]$ with $P(1)\ne 0.$

Conversely, a solution $f(x)\in \mathbb Z[[x]]$ of (\ref {III}) with
coefficients in $\{0,1\}$ defines, by the relation
$f(x)=f_{\mathcal A}(x)$, a subset $\mathcal A$ such that
$$r_{\mathrm M}(n,\mathcal A)
=c\ne0$$
for $n$ large enough.

Summing up, we get the following lemma.
\begin{lemma}\label{1} There exists an infinite subset $\mathcal A$ of non-negative integers
such that $r_{\mathrm M}(n, A)$ is a nonzero constant for $n$ large
enough if and only if there is a polynomial $P(x)\in \mathbb Z[x]$
with $P(1)\ne 0$ such that (\ref {III}) has a solution $f(x)\in
\mathbb Z[[x]]$ with coefficients $\in \{0,1\}$.
\end{lemma}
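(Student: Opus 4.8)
The plan is to note that both implications are obtained by unwinding the identities \eqref{I} and \eqref{II} already established above, so the work consists only of assembling them and verifying the two side conditions (infinitude of $\mathcal A$ and $P(1)\neq 0$).

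For the forward direction, I would start from an infinite $\mathcal A$ with $r_{\mathrm M}(n,\mathcal A)=c\neq 0$ for $n>n_0$. Its generating function $f_{\mathcal A}(x)=\sum_{a\in\mathcal A}x^a$ lies in $\mathbb Z[[x]]$ and, by construction, has all coefficients in $\{0,1\}$. Taking $f=f_{\mathcal A}$ in \eqref{I} and combining the right-hand side of \eqref{I} with the computation \eqref{II} gives precisely \eqref{III}, with $P(x)=(1-x)\sum_{n=0}^{n_0}r_{\mathrm M}(n,\mathcal A)x^n+cx^{n_0+1}\in\mathbb Z[x]$. Writing $P(x)=\sum_{k=0}^{d}p_kx^k$ with $d=\deg P$, the coefficient of $x^n$ in $P(x)/(1-x)$ equals $\sum_{k=0}^{\min(n,d)}p_k$, which is $P(1)$ for all $n\ge d$; hence $P(1)=c\neq 0$, as required.

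For the converse, suppose $f(x)\in\mathbb Z[[x]]$ has coefficients in $\{0,1\}$ and satisfies \eqref{III} for some $P\in\mathbb Z[x]$ with $P(1)\neq 0$. I would set $\mathcal A=\{a\in\mathbb N:\ \text{the coefficient of }x^a\text{ in }f\text{ equals }1\}$, so that $f=f_{\mathcal A}$. Then \eqref{I} yields $\sum_{n\ge 0}r_{\mathrm M}(n,\mathcal A)x^n=P(x)/(1-x)$, and the coefficient computation above shows $r_{\mathrm M}(n,\mathcal A)=P(1)\neq 0$ for all $n\ge\deg P$, so $r_{\mathrm M}(\cdot,\mathcal A)$ is eventually a nonzero constant. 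Finally, $\mathcal A$ must be infinite: if it were finite, $f_{\mathcal A}$ would be a polynomial, the left side of \eqref{III} would be a polynomial, so $P(x)/(1-x)$ would be a polynomial, forcing $(1-x)\mid P(x)$ and hence $P(1)=0$, contradicting the hypothesis.

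There is no genuine obstacle in this lemma; it is a translation step. The only points deserving a word of justification are the equivalence $P(1)\neq 0\Leftrightarrow c\neq 0$ (the telescoping/coefficient computation) and the infinitude of $\mathcal A$ in the converse (the polynomial-versus-rational-function dichotomy driven by $P(1)\neq 0$). The substantive part of the paper will be the subsequent analysis ruling out solutions $f$ of \eqref{III} with $\{0,1\}$-coefficients under the hypothesis of Theorem \ref{th1new}, using fractional power series and their logarithmic derivatives.
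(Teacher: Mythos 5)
Your proof is correct and follows essentially the same route as the paper, which obtains the lemma by assembling the generating-function identities (2.1)--(2.3) exactly as you do. The only difference is that you explicitly verify the infinitude of $\mathcal A$ in the converse direction (via the observation that a polynomial $f_{\mathcal A}$ would force $P(1)=0$), a point the paper leaves implicit.
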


It is convenient to work with power series with constant
term being 1. For subset $\mathcal A$ of non-negative integers, this can be
achieved by replacing $\mathcal A$ by $\mathcal A- {\rm min}\{x|x\in
\mathcal A\}$. Alternatively, for power series $f(x)$, this can be
achieved by dividing $f_{\mathcal A}(x)$ by the lowest term. Obviously, this does not affect the solvability of equation
(\ref{III}). So, from now on, we always assume $0\in \mathcal A$ and $f(0)=1$ unless specification.

By lemma \ref{1} and the above arguments, to prove Theorem \ref{I}, it is sufficient to prove the following theorem.
\begin{theorem}\label{mainnew}
Let $m\ge 1$. Assume there exists a prime $p$ and a positive integer $t$ such that
$p^t\mid b_0,\ {\rm but}\ p^t\nmid b_{i}\ {\rm for}\ 1\leq i\leq m.
$ Then, for any $P(x)\in \mathbb Z[x]$ with $P(0)=1$ and
$P(1)\ne 0,$ the equation
 \begin{equation}\label{eqnew}
f(x^{b_0})^{e_0}f(x^{b_1})^{e_1}\cdots f(x^{b_m})^{e_m}=\frac
{\displaystyle P(x)}{\displaystyle 1-x}
\end {equation}
has no solution in $\mathbb
C[[x]].$\end{theorem}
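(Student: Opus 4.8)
The strategy is to pass from $\mathbb{C}[[x]]$ to the ring of $p$-adic fractional power series and exploit the prime $p$ dividing $b_0$ to high order. Since $P(0)=1$, equation~\eqref{eqnew} forces $f(0)^{e_0+\cdots+e_m}=1$, so after adjusting we may take $f(0)=1$; then each $f(x^{b_i})^{e_i}$ is a unit in $\mathbb{C}[[x]]$, and the right-hand side $P(x)/(1-x)$ is a rational function with a genuine pole at $x=1$ and (since $P(1)\neq 0$) no zero there. The plan is to take the logarithmic derivative $\frac{d}{dx}\log(\cdot)$ of both sides. On the left this yields $\sum_{i=0}^m e_i b_i x^{b_i-1}\frac{f'(x^{b_i})}{f(x^{b_i})}$, and on the right it yields $\frac{P'(x)}{P(x)}+\frac{1}{1-x}$, which has a simple pole at $x=1$ with residue $+1$ (coming solely from the $\frac{1}{1-x}$ term, as $P(1)\neq0$).

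Next I would analyze the singularities of $g(x):=\frac{f'(x)}{f(x)}$. Writing $h(x)=\log f(x)\in x\mathbb{C}[[x]]$, the functional equation becomes $\sum_i e_i h(x^{b_i}) = \log P(x) - \log(1-x)$ as formal power series, i.e.\ $\sum_i e_i h(x^{b_i})$ has a logarithmic singularity at $x=1$. The key device is to work $p$-adically: view everything in $\mathbb{C}_p[[x]]$, or better in a ring of \emph{fractional} power series $\mathbb{C}_p[[x^{1/N}]]$ for suitable $N$, where one can substitute $x\mapsto$ a $b_0$-th root. Because $p^t\mid b_0$ while $p^t\nmid b_i$ for $i\geq1$, substituting $x\to\zeta x$ for $\zeta$ a primitive $p^t$-th root of unity fixes $x^{b_0}$ but moves $x^{b_i}$ to $\zeta^{b_i}x^{b_i}$ with $\zeta^{b_i}\neq1$. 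Averaging~\eqref{eqnew} over such $\zeta$ (multiplicatively), or taking the appropriate combination of logarithmic derivatives at $x$ and at $\zeta x$, should isolate the term $f(x^{b_0})^{e_0}$ and produce a contradiction: the left side would become a product/ratio built from $f(x^{b_0})$ and translates of the $f(x^{b_i})$, which is a power series (unit), whereas the right side, being essentially a ratio of cyclotomic-type shifts of $P(x)/(1-x)$, retains a pole at some $p^t$-th root of unity that cannot cancel.

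More precisely, the heart of the argument is this. Let $\zeta$ run over the primitive $p^t$-th roots of unity and consider $F(x)=\prod_{\zeta} \bigl(\text{LHS at }\zeta x\bigr)^{\pm1}$ chosen so that all $f(x^{b_i})$-factors with $i\geq 1$ telescope away (this works because $\{\zeta^{b_i}:\zeta\}$ is a full set of primitive $p^t$-th roots of unity, $p^t\nmid b_i$), leaving a power of $f(x^{b_0})$. The right-hand side then becomes $\prod_{\zeta}\bigl(P(\zeta x)/(1-\zeta x)\bigr)^{\pm1}$; its zeros and poles at the $p^t$-th roots of unity do not fully cancel because $\prod_\zeta(1-\zeta x)=\Phi_{p^t}(x)$ (the cyclotomic polynomial) contributes a pole of order exactly the number of $\zeta$'s with the $+1$ sign, while $\prod_\zeta P(\zeta x)$ is a polynomial not vanishing at those roots (as $P(1)\neq0$ and $P$ has integer coefficients—one must check $P$ does not vanish at \emph{any} primitive $p^t$-th root of unity, or absorb such factors). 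Evaluating orders of vanishing at $x=1$ (or at a chosen root of unity), or comparing radii of convergence, then shows $f(x^{b_0})$ would have to have a pole strictly inside $|x|<1$, contradicting that $f\in\mathbb{C}[[x]]$ has a power-series expansion (radius of convergence $\geq$ the radius forced by the RHS, which is exactly $1$).

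The main obstacle I anticipate is making the "telescoping via $p^t$-th roots of unity" rigorous when $f$ is only a formal power series, not a priori convergent: one must either first establish that $f$ \emph{converges} on $|x|<1$ (which follows since its coefficients are bounded—being $0$ or $1$ in the original problem, or more generally controlled by~\eqref{eqnew}), or carry out the entire root-of-unity manipulation purely formally in $\mathbb{C}_p[[x^{1/N}]]$ and extract a contradiction from the structure of the functional equation alone (e.g.\ by examining a suitable coefficient, or the order of the induced logarithmic singularity). The delicate bookkeeping is ensuring the exponents $e_i$ and the multiplicities of the roots of unity combine so that the $i\geq1$ terms cancel \emph{exactly} while the $i=0$ term and the singular part of the RHS do \emph{not}; this is where the hypothesis $p^t\mid b_0$, $p^t\nmid b_i$ is used in an essential way, and where the "fractional power series and logarithmic derivatives" advertised in the introduction enter.
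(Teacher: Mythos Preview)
Your central mechanism---choosing signs $\pm1$ on the substitutions $x\mapsto\zeta x$ (with $\zeta$ a primitive $p^t$-th root of unity) so that the factors $f(x^{b_i})^{e_i}$ with $i\ge1$ ``telescope away'' while a nontrivial power of $f(x^{b_0})$ survives---does not work. Since $p^t\mid b_0$, the substitution $x\mapsto\zeta x$ \emph{fixes} $f(x^{b_0})$, so any signed product over $\zeta$ leaves $f(x^{b_0})^{e_0\sum_\zeta\epsilon(\zeta)}$. For $i\ge1$, you would need $\sum_{\zeta:\,\zeta^{b_i}=\omega}\epsilon(\zeta)=0$ for every value $\omega$; but if for instance $p\nmid b_i$ then $\zeta\mapsto\zeta^{b_i}$ is a bijection on the $p^t$-th roots of unity, and this forces $\epsilon(\zeta)=0$ for all $\zeta$, killing the $b_0$-term too. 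Even when $p\mid b_i$ but $p^t\nmid b_i$, the fibers of $\zeta\mapsto\zeta^{b_i}$ are cosets of a nontrivial subgroup, and you cannot make all such coset-sums vanish while keeping $\sum_\zeta\epsilon(\zeta)\neq0$. So the products $\prod_\zeta f(\zeta^{b_i}x^{b_i})^{\pm e_i}$ do not disappear; you are left with a functional equation still genuinely involving $f$ at several arguments, and the pole-counting argument at $x=1$ or at a root of unity cannot proceed. (Separately, $P$ is allowed to vanish at primitive $p^t$-th roots of unity---only $P(1)\neq0$ is assumed---so the parenthetical ``or absorb such factors'' hides real work.)

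The paper's route is quite different and does not use root-of-unity averaging at all. One substitutes $x\to x^{1/b_0}$ and solves \eqref{eqnew} for $f(x)$ explicitly as an infinite product in a ring $\mathbb{C}[[x^\Lambda]]$ of formal series whose exponents lie in a discrete additive monoid $\Lambda\subset\mathbb{Q}_{\ge0}$ generated by $1/b_0$ and the ratios $b_i/b_0$ (this is what ``fractional power series'' means here---fractional \emph{exponents}, not $p$-adics). Taking the formal logarithmic derivative converts the product to a sum and yields an explicit arithmetic criterion for $f$ to lie in $\mathbb{C}[[x]]$. One then strips off all roots of $P$ that are not roots of unity of order built from primes dividing $b_0\cdots b_m$, reducing to a right-hand side $H(x)=\prod_d(1-x^d)^{m_d}$. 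The hypothesis $\mathrm{ord}_p(b_0)>\mathrm{ord}_p(b_i)$ is used not to average but to bound: it forces the corresponding solution $g(x)=\prod_d(1-x^d)^{g_d}$ to have $g_d=0$ for $d\gg0$, via an elementary growth estimate on $bd\,\theta_{i_1}^{-1}\cdots\theta_{i_k}^{-1}$. Finally, rewriting in cyclotomic polynomials and applying Gauss's lemma to a resulting polynomial identity in an auxiliary variable yields the contradiction with the pole $1/(1-x)$ (encoded as $c_1=-1$). The prime $p$ enters through a valuation/growth argument, not through substitutions by roots of unity.
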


In the following, we will illustrate the idea of the proof, especially the motivation of fractional power series. Let us look at Moser's argument first.

Moser's argument: For each $k\ge 2$, Moser constructed an infinite set $\mathcal A$ such that $r_{1,k}(n, \mathcal A)=1$ for all $n\ge 0$ by solving the equation
$$f(x)f(x^k)=\sum _{n\ge 0}^{+\infty}x^n=\frac {1}{1-x}.$$
Writing it as
$$f(x)=\frac {1}{1-x}f(x^k)^{-1},$$
then iterate
\begin {equation*}\begin {aligned}
f(x)&=\frac {\displaystyle 1}{\displaystyle 1-x}\left(\frac {\displaystyle 1}{\displaystyle 1-x^k}\right)^{-1}f(x^{k^2})\\
&=\frac {\displaystyle 1}{\displaystyle 1-x}\left(\frac {\displaystyle 1}{\displaystyle 1-x^k}\right)^{-1}\left(\frac {\displaystyle 1}{\displaystyle 1-x^{k^2}}\right)f(x^{k^3})^{-1}\\
&=\cdots \cdots \\
&=\prod \limits _{i=0}^{j-1}\left(\frac {\displaystyle 1}{\displaystyle 1-x^{k^i}}\right)^{(-1)^i}\cdot f(x^{k^j})^{(-1)^j}.\\
\end {aligned} \end {equation*}
Letting $j\rightarrow +\infty$, we get
$$f(x)=\prod _{i=0}^{+\infty }\left(\frac {1}{1-x^{k^i}}\right)^{(-1)^i}=\prod _{i=0}^{+\infty}(1+x^{k^{2i}}+x^{2k^{2i}}+\cdots +x^{(k-1)k^{2i}}).$$
By the uniqueness of $k$-adic representation of an integer, $f(x)$
is the generating function of the set $$\mathcal A=\left \{\sum
\limits _{i=0}^{+\infty}\varepsilon _{i}k^{2i},\varepsilon _i\in
\{0,1,\cdots ,k-1\}\right\}.$$

Our initial approach is similar to Moser's argument.  For simplicity, we will take the
example, ${\mathrm M}=\{(2,1),(3,1)\}$ to illustrate the ideas. From Lemma \ref{1}, we need to consider the equation
\begin{equation}\label{IV}
f(x^2)f(x^3)=\frac {P(x)}{1-x}.\end{equation} with $P(0)=1$ and
$P(1)\ne 0$.

If Equation (\ref{IV}) has a solution $f=f_{\mathcal A}$ for some
infinite subset $\mathcal A$ of non-negative integers, then $f$
defines an analytic function in the unit disk with $f(0)=1$.

Let $0<x<1$ and $x^{\frac {1}{2}}$ be the positive square root of
$x$. Substituting $x$ by $x^{\frac {1}{2}}$ in (\ref {IV}), we get
\begin {equation*}
f(x)=\frac {P(x^{\frac {1}{2}})}{1-x^{\frac {1}{2}}}f(x^{\frac {3}{2}})^{-1}.
\end{equation*}
Repeating Moser's arguments, we get
\begin{equation*}
f(x)=\prod _{k=0}^{j-1}\left (\frac {P(x^{\frac {1}{2}(\frac {3}{2})^k})}{1-x^{\frac {1}{2}(\frac {3}{2})^k}}\right )^{(-1)^k}f(x^{(\frac {3}{2})^j})^{(-1)^j}.
\end{equation*}
Since $x^{\frac {1}{2}(\frac {3}{2})^j}\rightarrow 0$ and $f(x^{\frac {1}{2}(\frac {3}{2})^j})\rightarrow 1$ as $j\rightarrow +\infty$, we obtain
\begin {equation}\label{V}
f(x)=\prod _{k=0}^{+\infty }\left (\frac {P(x^{\frac {1}{2}(\frac {3}{2})^k})}{1-x^{\frac {1}{2}(\frac {3}{2})^k}}\right)^{(-1)^k},
\end{equation}
for $0<x<1.$

Viewing
$x^{\frac {1}{2}(\frac {3}{2})^k}$ as an analytic function defined
in $\mathbb C-(-\infty ,0]$ with value $1$ at $x=1$, since for any
positive integer $n$,
\begin{equation*}
\sum _{k=0}^{+\infty }|x^{n\frac {1}{2}(\frac {3}{2})^k}|\le \sum
_{k=0}^{+\infty }r^{n\frac {1}{2}(\frac {3}{2})^k}<+\infty,\ {\rm
if}\ |x|\le r<1, \end{equation*} the infinite products
\begin{equation*}
\prod _{k=0}^{+\infty}P(x^{\frac {1}{2}(\frac {3}{2})^{2k}}),\ \prod _{k=0}^{+\infty}P(x^{\frac {1}{2}(\frac {3}{2})^{2k+1}}),\
\prod _{k=0}^{+\infty}(1-x^{\frac {1}{2}(\frac {3}{2})^{2k}}),\ {\rm and}\ \prod _{k=0}^{+\infty}(1-x^{\frac {1}{2}(\frac {3}{2})^{2k+1}})
\end{equation*}
are absolutely and uniformly convergent in compact subsets of $$D'=\{x\mid x\in\mathbb
C,\ |x|< 1\}-\{x\mid x\in\mathbb R,\ -1<x\le 0\},$$ hence,
analytic in $D'$ (e.g., see Proposition 3.2 of Chapter 5
of~\cite{Stein}). Therefore, the right hand side of (\ref {V}) is a
meromorphic function in $D'$.

The analytic function $f$ is determined by its values on the
interval $(0,1)$ (e.g., see Corollary 4.9 of Chapter 2
of~\cite{Stein}). Therefore, by (\ref {V}),
\begin {equation}\label{Vnew}
f(x)=\prod _{k=0}^{+\infty }\left (\frac {P(x^{\frac {1}{2}(\frac {3}{2})^k})}{1-x^{\frac {1}{2}(\frac {3}{2})^k}}\right)^{(-1)^k}
\end{equation}
holds for all $x\in D'.$

However, from Equation ({\ref{Vnew}}),
it seems that $f(x)$ can not be analytic around 0. This contradicts
to the hypothesis that $f(x)=f_{\mathcal A}(x)$, which is analytic in the unit disk. The rigorous proof goes as follows.

A useful method to treat ``infinite products" is taking its
logarithmic derivative, which transforms ``infinite products" to
``infinite sums" (e.g., see Proposition 3.2 of Chapter 5
of~\cite{Stein}). So instead of considering $f(x)$, we look at
$\frac {\displaystyle f'(x)}{\displaystyle f(x)}$. Since $f(0)=1$,
if $f(x)$ is analytic around zero, so is $\frac {\displaystyle
f'(x)}{\displaystyle f(x)}$.

As $P(0)=1$, we can assume $$\frac {\displaystyle P(x)}{\displaystyle 1-x}=\prod \limits _{i}(1-\alpha _ix)^{n_i}, {\rm with}\ \alpha_i '{\rm s\  distict.}$$

Denote $$\frac {\displaystyle P(x)}{\displaystyle 1-x}=G(x).$$ Then
\begin {equation}\label{VI}
x\frac {G'(x)}{G(x)}=\sum _{i}n_i\frac {-\alpha _ix}{1-\alpha _i x}=-\sum _{n=1}^{+\infty }\sum _{i}n_i\alpha _i^nx^n.
\end{equation}
From equation (\ref{Vnew}) and (\ref{VI}), we have\begin {equation}\label{VII}
\begin {aligned}
x\frac {f'(x)}{f(x)}&=\sum _{k=0}^{+ \infty }(-1)^k\frac {G(x^{\frac {1}{2}(\frac {3}{2})^k})'x}{G(x^{\frac {1}{2}(\frac {3}{2})^k})}
\\&=\sum _{k=0}^{+ \infty }(-1)^k\frac {1}{2}(\frac {3}{2})^k \frac {G'(x^{\frac {1}{2}(\frac {3}{2})^k})}{G(x^{\frac {1}{2}(\frac {3}{2})^k})}x^{\frac {1}{2}(\frac {3}{2})^k}\\
&=-\sum _{k=0}^{+\infty }(-1)^k \frac {1}{2}(\frac {3}{2})^k \sum _{n=1}^{+\infty }\sum _{i}n_i\alpha _i^nx^{n\frac {1}{2}(\frac {3}{2})^k}\\
\end {aligned}
\end {equation}

Note that since $|x|<1,\ x^{\frac {1}{2}(\frac {3}{2})^k}$ goes to zero
very fast as $k\rightarrow +\infty.$ A routine argument, which we do
not make here, shows that (\ref {VII}) are absolutely and uniformly
convergent, in a small neighborhood of zero (inside $D'$).
Therefore, we can take derivatives of (\ref {VII}) term by term
(e.g., see Theorem 5.3 of Chapter 2 of~\cite{Stein}).

Taking derivatives of (\ref {VII}) of all order term by term and evaluating the
derivatives at zero, one can see $\frac {\displaystyle
xf'(x)}{\displaystyle f(x)}$ is analytic around zero if and only if
the coefficient of $x^{\lambda }$ in (\ref {VII}) is zero, whenever
$\lambda \not \in \mathbb N$.

Letting the coefficient of $x^{\lambda }$ in (\ref {VII}) be 0, we get the equation
\begin {equation}\label{VIII}
\sum _{\frac {1}{2}(\frac {3}{2})^k|\lambda }(-1)^k\frac
{1}{2}(\frac {3}{2})^k\sum _{i}n_i\alpha _i^{\lambda \cdot 2(\frac
{2}{3})^k}=0,
\end {equation}
where
$$\frac {1}{2}(\frac {3}{2})^k|\lambda\Leftrightarrow\lambda \cdot 2(\frac {2}{3})^k\in \mathbb N.
$$
Finally, we succeed to prove for all $\lambda \not \in \mathbb N$, Equations (\ref {VIII}) have no common solution $\alpha _i 's$.

 After that, we realized
that $f(x)$ and $x\frac {\displaystyle f'(x)}{\displaystyle f(x)}$ (see Equations (\ref{Vnew}), (\ref{VII}))
can be viewed as some generalized formal series, which we call
fractional power series. Then everything can be computed formally in the
ring of fractional power series. In the rest of paper, we will use fractional power series other than analytic functions since the convergence of the former ones are much simpler than the latter ones.

The above arguments explain the motivation of using fractional power
series. As far as we know, the notion of fractional power series do
not appear in the literature. So they will be defined and discussed
in detail in Section 3. Generally speaking, fractional power series
behave like formal power series.

After the preparation of section 3, we begin to prove the main
result of this paper, Theorem \ref{mainnew}. The proof is actually
direct, but it is rather long. So We had better divide it into
several steps. The plan of the proof will be described in detail at the beginning
of section 4, after we introduce some basic notations. Section 4 and
section 5 provide all the ingredients of the proof. Finally, we prove
Theorem \ref{mainnew} in section 6.

At last, we discuss the question of S\'{a}rkozy and S\'{o}s in
section 7. We will give a conjectural answer in the case that all
the coefficients of linear forms are positive.

\section {Fractional power series}
In this section, we introduce the concept of fractional power series and basic operations of them, including their convergence, derivatives,
infinite products and logarithmic derivatives, etc.
We also prove their basic properties. These are fundamental to our later computations.

Let $\theta _1, \cdots ,\theta _m>1$ be distinct real numbers and
$b\in \mathbb N$ be a positive integer. Define $\mathbb Z_{\ge
0}[x_1,\cdots ,x_m]$ to be the set of polynomials of $x_1,\cdots x_m$
with coefficients of non-negative integers. Define
$$\Lambda =\left \{\frac {1}{b}F(\theta _1,\cdots ,\theta _m)\mid F\in \mathbb Z_{\ge 0}[x_1,\cdots ,x_m]\right \}.$$
We call $\Lambda $ the lattice associated to $(b; \theta _1,\cdots
,\theta _m)$.
\begin{proposition}\label{p1} Let $\Lambda $ be defined as above. Then
\begin{equation*}\begin{aligned}
&(1)\ \Lambda{\rm\ is\ discrete,\ i.e.,}\ \forall\ M>0,\ \{\lambda\in \Lambda\mid \lambda<M\}\ {\rm is\ a\ finite\ set.}\\
&(2)\ {\rm If\ } \lambda ,\lambda '\in \Lambda,\ {\rm then\ }  \lambda +\lambda
'\in \Lambda .\\
&(3)\ {\rm If\ } \lambda \in \Lambda,\ {\rm then\ }   \theta _i\lambda \in \Lambda\
 {\rm for}\ i=1,\cdots ,m.\\
&(4)\ \mathbb Z _{\ge 0}\subseteq \Lambda,\ {\rm  where\ } \mathbb Z _{\ge
0}\ {\rm is\ the\ set\ of\ nonnegative\ integers.} \ \ \ \ \ \ \ \ \
\end{aligned}\end{equation*}
\end{proposition}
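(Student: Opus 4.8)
The plan is to read off parts (2), (3) and (4) directly from the definition, manipulating only the representing polynomials, and to put the (modest) real work into the discreteness statement (1).

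For (2), write $\lambda=\frac{1}{b}F(\theta_1,\dots,\theta_m)$ and $\lambda'=\frac{1}{b}G(\theta_1,\dots,\theta_m)$ with $F,G\in\mathbb Z_{\ge 0}[x_1,\dots,x_m]$; then $F+G\in\mathbb Z_{\ge 0}[x_1,\dots,x_m]$ and $\lambda+\lambda'=\frac{1}{b}(F+G)(\theta_1,\dots,\theta_m)\in\Lambda$. For (3), from $\lambda=\frac{1}{b}F(\theta_1,\dots,\theta_m)$ we get $\theta_i\lambda=\frac{1}{b}(x_iF)(\theta_1,\dots,\theta_m)$ with $x_iF\in\mathbb Z_{\ge 0}[x_1,\dots,x_m]$. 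For (4), given $n\in\mathbb Z_{\ge 0}$, take the constant polynomial $F=bn\in\mathbb Z_{\ge 0}[x_1,\dots,x_m]$, so that $n=\frac{1}{b}F(\theta_1,\dots,\theta_m)\in\Lambda$.

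For (1), the idea is that a bound on the real number $F(\theta_1,\dots,\theta_m)$ forces a bound on the polynomial $F$ itself. Set $\theta=\min\{\theta_1,\dots,\theta_m\}>1$, fix $M>0$, and suppose $\lambda=\frac{1}{b}F(\theta_1,\dots,\theta_m)<M$ with $F=\sum_{a}c_a\,x_1^{a_1}\cdots x_m^{a_m}$, $c_a\in\mathbb Z_{\ge 0}$, $a=(a_1,\dots,a_m)$. Each nonzero term of $F$ contributes a summand to the (nonnegative) quantity $F(\theta_1,\dots,\theta_m)$, so
\[
c_a\,\theta_1^{a_1}\cdots\theta_m^{a_m}\le F(\theta_1,\dots,\theta_m)<bM ;
\]
since every $\theta_i>1$ we have $\theta_1^{a_1}\cdots\theta_m^{a_m}\ge\theta^{\,a_1+\cdots+a_m}\ge 1$, so whenever $c_a\ne 0$ both $c_a<bM$ and $a_1+\cdots+a_m<\log(bM)/\log\theta$. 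Hence every admissible $F$ is supported on the finitely many monomials in $x_1,\dots,x_m$ of total degree below $\log(bM)/\log\theta$, with each coefficient a nonnegative integer below $bM$; there are only finitely many such $F$. Consequently $\{\lambda\in\Lambda:\lambda<M\}$ is contained in the image of this finite set of polynomials under the map $F\mapsto\frac{1}{b}F(\theta_1,\dots,\theta_m)$, so it is finite, which is (1).

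The only step carrying any content is this last one, and the point to handle carefully is the conversion of a bound on the \emph{value} $F(\theta_1,\dots,\theta_m)$ into a simultaneous bound on the \emph{degree} and the \emph{coefficients} of $F$ — precisely where the hypothesis $\theta_i>1$ enters. Parts (2)--(4) are purely formal.
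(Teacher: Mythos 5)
Your proof is correct and follows essentially the same route as the paper: parts (2)--(4) are read off formally from the definition (the paper omits them as immediate), and for (1) you bound the total degree and the coefficients of $F$ using nonnegativity of the coefficients together with $\min\{\theta_1,\dots,\theta_m\}>1$, exactly as the paper does, concluding that only finitely many polynomials $F$ can produce a value below $bM$.
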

\begin{proof}
We only prove (1). Let $F\in \mathbb Z_{\ge 0}[x_1,\cdots ,x_m]$.
Denote the total degree of $F$ with respect to $x_1,\cdots ,x_m$ by
$d$. Let $C$ be an arbitrary coefficient of $F$. If
\begin{equation}\label{tirednew}
\frac
{\displaystyle 1}{\displaystyle b}F(\theta _1 ,\cdots ,\theta _m)\le
M,\end{equation} then
$$(\min \{\theta _1,\cdots ,\theta _m\})^d\le F(\theta _1,\cdots ,\theta
_m)\le bM,\ $$$$0\le C\le F(\theta _1,\cdots ,\theta _m)\le bM.$$ So
there are only finitely many $F$ satisfying Equation (\ref{tirednew}).
\end{proof}
\begin{definition}\label{d1}
The formal series $$\sum \limits _{\lambda \in \Lambda
}c_{\lambda}x^{\lambda },\ {\rm with}\ c_{\lambda }\in \mathbb C, $$ are called
fractional power series with respect to $\Lambda $, or $(b; \theta
_1,\cdots , \theta _m).$ Define $$\sum \limits _{\lambda \in
\Lambda }c_{\lambda}x^{\lambda }=\sum \limits _{\lambda \in
\Lambda }c_{\lambda}'x^{\lambda }$$ if and only
if $c_{\lambda}=c_{\lambda}'$ for all $\lambda \in \Lambda .$ Denote $\mathbb C[[x^{\Lambda}]]$ to be the set of all fractional power
series with respect to $\Lambda $.
\end{definition}
The following definition makes $\mathbb C[[x^{\Lambda}]]$ a commutative ring with unit element.
\begin{definition}\label{d2}
For $$\sum \limits _{\lambda \in \Lambda }c_{\lambda}x^{\lambda },\
\sum \limits _{\lambda \in \Lambda }c_{\lambda}'x^{\lambda }\in
\mathbb C[[x^{\Lambda}]],$$ their sum and product are defined as
\begin{equation*}
\begin {aligned}
\sum  _{\lambda \in \Lambda }c_{\lambda}x^{\lambda }+\sum _{\lambda
\in \Lambda }c_{\lambda}'x^{\lambda }&=\sum
_{\lambda \in \Lambda }(c_{\lambda}+c_{\lambda}')x^{\lambda },\\
\sum _{\lambda \in \Lambda }c_{\lambda}x^{\lambda }\cdot \sum
_{\lambda \in \Lambda }c_{\lambda}'x^{\lambda }&=\sum _{\lambda \in
\Lambda }(\sum _{\mu +\nu =\lambda }c_{\mu}c_{\nu
}')x^{\lambda }. \\
\end {aligned}
\end{equation*}We call $\mathbb C[[x^{\Lambda }]]$
the ring of fractional power series with respect to $$\Lambda\ {\rm or}\ (b;\theta _1,\cdots ,\theta _m).$$
\end{definition}
By (3) of Proposition \ref{p1}, we know that $${\rm for\ any}\ \lambda \in \Lambda,\ {\rm the\ sum\ } \sum \limits _{\mu +\nu =\lambda }c_{\mu}c_{\nu }$$ is a finite
sum, so the multiplication of two elements of $\mathbb C[[x^{\Lambda }]]$ is well-defined.
It is easily seen that $x^0$ is the unit element of $\mathbb C[[x^{\Lambda }]]$.  Moreover, by (4) of Proposition \ref {p1}, we have
$$\mathbb C[[x]]\subseteq \mathbb C[[x^{\Lambda }]].$$
\begin{remark}\label{r1}
If $\{\theta _1,\cdots ,\theta _m\}=\varnothing $, then
$$\mathbb C[[x^{\Lambda }]]=\{\sum \limits _{n\ge 0}c_nx^{\frac {n}{b}}\mid c_n\in \mathbb C\}=\mathbb C[[x^{\frac {1}{b}}]].$$
 If $f\in \mathbb C[[x^{\frac {1}{b}}]]$ for some $b\geq 1$, then $f$ is called a fractional power serie by Stanley
 (see page 161 of ~\cite{Stanley}). So Definition \ref{d1} can be viewed as a generalization of Stanley's definition.
\end{remark}
Generally speaking, $\mathbb C[[x^{\Lambda }]]$ has many properties
similar to  $\mathbb C[[x]]$. For example, we can define metrics
both on $\mathbb C[[x^{\Lambda }]]$ and  $\mathbb C[[x]]$, which
make them complete metric spaces.
\begin{definition}\label{d3}
Let $f=\sum \limits _{\lambda \in \Lambda}c_{\lambda }x^{\lambda}\in
\mathbb C[[x^{\Lambda }]]$. The order of $f$, denoted by ${\rm ord}
f$, is defined as follows:
\begin{equation*}
{\rm ord}f=\left \{
\begin{aligned}
\min _{c_{\lambda}\ne 0}\{\lambda \},&&{\rm if}\ f\ne 0;\\
+\infty,&&{\rm if}\ f=0.\\
\end{aligned}
\right.
\end{equation*}
\end{definition}
We have the following proposition.
\begin{proposition}\label{p2}
Let $f, g\in \mathbb C[[x^{\Lambda }]]$. Then
\begin{equation*}\begin{aligned}
&(1)\ {\rm ord}(f+g)\ge \min \{{\rm ord}f,{\rm ord}g\}.\\
&(2)\ {\rm ord}(f\cdot g)={\rm ord}f+{\rm ord}g.\\
&(3)\ {\rm ord}f=+\infty \Leftrightarrow f=0. \ \ \ \ \ \ \ \ \ \ \ \ \ \ \ \ \ \ \ \ \ \ \ \ \ \ \ \ \ \ \ \ \ \ \ \  \ \ \  \ \ \ \ \ \ \ \ \ \ \ \ \ \ \ \ \ \ \ \ \ \ \ \  \ \ \ \ \ \
\end{aligned}\end{equation*}
\end{proposition}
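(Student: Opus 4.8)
The plan is to verify (1)--(3) directly from Definitions \ref{d2} and \ref{d3}; only part (2) carries any content, and even that is the familiar behaviour of a valuation on a formal series ring. Write $f=\sum_{\lambda\in\Lambda}c_\lambda x^\lambda$ and $g=\sum_{\lambda\in\Lambda}c'_\lambda x^\lambda$. Before anything else I would record the standing remark that, by the discreteness of $\Lambda$ (Proposition \ref{p1}(1)), every nonempty subset of $\Lambda$ has a least element; hence for $f\neq 0$ the quantity $\min_{c_\lambda\neq 0}\{\lambda\}$ is genuinely attained and lies in $\Lambda$. This is what makes Definition \ref{d3} meaningful and it is used implicitly throughout.

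Statement (3) I would settle in one line from the definition: ${\rm ord}f=+\infty$ means there is no $\lambda$ with $c_\lambda\neq 0$, i.e.\ every coefficient of $f$ vanishes, i.e.\ $f=0$. For (1), assuming $f+g\neq 0$ (the case $f+g=0$ being trivial under the convention $+\infty\ge M$), I set $\mu={\rm ord}(f+g)$; then the coefficient $c_\mu+c'_\mu$ of $x^\mu$ in $f+g$ is nonzero, so $c_\mu\neq 0$ or $c'_\mu\neq 0$, whence $\mu\ge{\rm ord}f$ or $\mu\ge{\rm ord}g$, and therefore $\mu\ge\min\{{\rm ord}f,{\rm ord}g\}$.

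For (2) I would first clear the degenerate case $f=0$ or $g=0$ using (3) together with the conventions for $+\infty$, and then assume $f,g\neq 0$, putting $\alpha={\rm ord}f$ and $\beta={\rm ord}g$, so $c_\alpha\neq 0$, $c'_\beta\neq 0$, and $c_\mu=0$ for $\mu<\alpha$, $c'_\nu=0$ for $\nu<\beta$. By Proposition \ref{p1}(2) we have $\alpha+\beta\in\Lambda$. The key step is to inspect the coefficient $\sum_{\mu+\nu=\lambda}c_\mu c'_\nu$ of $x^\lambda$ in $fg$, a finite sum over $\mu,\nu\in\Lambda$ by Proposition \ref{p1}(1): if $\lambda<\alpha+\beta$ then every admissible pair satisfies $\mu<\alpha$ or $\nu<\beta$ (else $\mu+\nu\ge\alpha+\beta>\lambda$), so the entire sum is $0$; and if $\lambda=\alpha+\beta$ then the only pair that can contribute is $(\mu,\nu)=(\alpha,\beta)$, so the coefficient of $x^{\alpha+\beta}$ in $fg$ equals $c_\alpha c'_\beta$. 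Since $\mathbb C$ has no zero divisors, $c_\alpha c'_\beta\neq 0$, which promotes the easy bound ${\rm ord}(fg)\ge\alpha+\beta$ to the equality ${\rm ord}(fg)=\alpha+\beta$ asserted in (2).

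I do not anticipate a real obstacle: this is the standard order computation for a formal power series ring, and the only points demanding a moment's care are the appeal to discreteness of $\Lambda$ (so that orders are true minima and the convolution sums are finite) and the use of the integral-domain property of $\mathbb C$ to obtain exact equality, rather than merely an inequality, in (2). A convenient byproduct of the argument for (2) is that $\mathbb C[[x^\Lambda]]$ itself has no zero divisors, i.e.\ is an integral domain.
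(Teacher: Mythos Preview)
Your proof is correct and complete. The paper itself states Proposition~\ref{p2} without proof, treating it as routine; your verification from Definitions~\ref{d2} and~\ref{d3}, together with the discreteness in Proposition~\ref{p1}(1) and the fact that $\mathbb C$ is an integral domain, is exactly the standard argument one would supply.
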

Fix some real number $\beta \in (0,1)$.
\begin{definition}\label{d4}
Let $f \in \mathbb C[[x^{\Lambda }]]$. The valuation of $f$, denoted by $|f|$, is equal to $\beta ^{{\rm ord }f}$.
\end{definition}
Corresponding to Proposition \ref {p2}, we have
\begin {proposition}\label{p3}
Let $f,g\in \mathbb C[[x^{\Lambda }]]$. Then
\begin{equation*}\begin{aligned}
&(1)\ |f+g|\le \max \{|f|,|g|\}.\\
&(2)\ |f\cdot g|=|f||g|.\\
&(3)\ |f|=0\Leftrightarrow f=0. \ \ \ \ \ \ \ \ \ \ \ \ \ \ \ \ \ \ \ \ \ \ \ \ \ \ \ \ \ \ \ \ \ \ \ \  \ \ \  \ \ \ \ \ \ \ \ \ \ \ \ \ \ \ \ \ \ \ \ \ \ \ \  \ \ \ \ \ \
\end{aligned}\end{equation*}
\end{proposition}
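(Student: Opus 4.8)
The plan is to deduce Proposition~\ref{p3} directly from Proposition~\ref{p2} by transporting each statement about ${\rm ord}$ through the map $t\mapsto \beta^{t}$. Since $0<\beta<1$, this map is strictly decreasing and continuous on $\mathbb R$, satisfies $\beta^{s+t}=\beta^{s}\beta^{t}$, takes only positive values on $\mathbb R$, and --- by the convention built into Definition~\ref{d4} --- sends $+\infty$ to $0$. Being strictly decreasing, it also interchanges $\min$ and $\max$, i.e.\ $\beta^{\min\{s,t\}}=\max\{\beta^{s},\beta^{t}\}$. These are exactly the features needed to convert parts (1)--(3) of Proposition~\ref{p2} into parts (1)--(3) of Proposition~\ref{p3}.

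For (1): Proposition~\ref{p2}(1) gives ${\rm ord}(f+g)\ge \min\{{\rm ord}\,f,{\rm ord}\,g\}$, so applying the decreasing map $\beta^{(\cdot)}$ yields
\[
|f+g|=\beta^{{\rm ord}(f+g)}\le \beta^{\min\{{\rm ord}\,f,{\rm ord}\,g\}}=\max\{\beta^{{\rm ord}\,f},\beta^{{\rm ord}\,g}\}=\max\{|f|,|g|\}.
\]
For (2): Proposition~\ref{p2}(2) gives ${\rm ord}(f\cdot g)={\rm ord}\,f+{\rm ord}\,g$, hence $|f\cdot g|=\beta^{{\rm ord}\,f+{\rm ord}\,g}=\beta^{{\rm ord}\,f}\beta^{{\rm ord}\,g}=|f|\,|g|$, where when $f$ or $g$ is $0$ one reads $\beta^{+\infty}=0$ and $t+(+\infty)=+\infty$. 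For (3): since $\beta^{t}>0$ for every real $t$, we have $|f|=0\iff \beta^{{\rm ord}\,f}=0\iff {\rm ord}\,f=+\infty$, and the last condition is equivalent to $f=0$ by Proposition~\ref{p2}(3).

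I expect no genuine obstacle here: all the substance sits in Proposition~\ref{p2} (which in turn follows routinely from Definition~\ref{d3} together with the descriptions of addition and multiplication in $\mathbb C[[x^{\Lambda}]]$ from Definition~\ref{d2}, using the discreteness of $\Lambda$ from Proposition~\ref{p1}(1) to ensure the minimum defining ${\rm ord}$ is actually attained), and Proposition~\ref{p3} is then a purely formal restatement. The only thing demanding care is the bookkeeping with the symbol $+\infty$: one must fix and use consistently the conventions $\beta^{+\infty}=0$, $\min\{t,+\infty\}=t$, $\max\{t,+\infty\}=+\infty$, and $t+(+\infty)=+\infty$, so that the displayed identities remain valid when $f$ or $g$ is the zero series.
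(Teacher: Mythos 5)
Your proposal is correct, and it matches the paper's intent exactly: the paper states Proposition~\ref{p3} with no proof, introducing it only with ``Corresponding to Proposition~\ref{p2}'', i.e.\ it is regarded as the immediate translation of the ${\rm ord}$ statements through $t\mapsto\beta^{t}$, which is precisely your argument (including the $+\infty$ conventions). Nothing further is needed.
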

Given two elements $f,g\in \mathbb C[[x^{\Lambda }]]$, their distance is defined as
$$d(f,g)=|f-g|.$$

By Proposition \ref {p3}, $ \mathbb C[[x^{\Lambda }]]$ is really a metric space. And (1) of Proposition \ref {p3}
is usually called the strong triangle inequality.

A sequence $\{f_n\}_{n\in \mathbb N}$ is convergent to $f$ if and
only if $$\lim \limits _{n\rightarrow +\infty}d(f_n,f)=0, $$ or
equivalently, $$\lim \limits _{n\rightarrow +\infty} {\rm
ord}(f_n-f)=+\infty.$$ In this case, we denote $$f=\lim \limits
_{n\rightarrow +\infty}f_n.$$

The following proposition shows that $ \mathbb C[[x^{\Lambda }]]$ is a complete metric space, i.e., every cauchy sequence converges.
\begin{proposition}\label{p4}
If $\lim \limits _{n\rightarrow +\infty}{\rm ord}(f_{n+1}-f_n)=+\infty$, then there exists $f\in \mathbb C[[x^{\Lambda }]]$
such that $f=\lim \limits _{n\rightarrow +\infty}f_n$.
\end{proposition}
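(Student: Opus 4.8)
The plan is to run the standard completeness argument for a ring of formal series carrying an order valuation, the only structural input being that $\Lambda$ is discrete (Proposition \ref{p1}(1)). Write each $f_n = \sum_{\lambda \in \Lambda} c_\lambda^{(n)} x^{\lambda}$ with $c_\lambda^{(n)} \in \mathbb{C}$. The first step is to build a candidate limit coefficient by coefficient. Fix $\lambda \in \Lambda$. By hypothesis there is an index $N(\lambda)$ with $\mathrm{ord}(f_{n+1}-f_n) > \lambda$ for all $n \geq N(\lambda)$; by Definition \ref{d3} the coefficient of $x^{\lambda}$ in $f_{n+1}-f_n$ then vanishes, i.e. $c_\lambda^{(n+1)} = c_\lambda^{(n)}$ for every $n \geq N(\lambda)$. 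Hence the sequence $\bigl(c_\lambda^{(n)}\bigr)_{n}$ is eventually constant; call its eventual value $c_\lambda$, and set $f := \sum_{\lambda \in \Lambda} c_\lambda x^{\lambda}$, which lies in $\mathbb{C}[[x^{\Lambda}]]$ by Definition \ref{d1}.

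It then remains to verify that $f = \lim_{n\to\infty} f_n$, that is, $\mathrm{ord}(f - f_n) \to +\infty$. Fix $M > 0$. Since $\Lambda$ is discrete, the set $S_M := \{\lambda \in \Lambda : \lambda \leq M\}$ is finite. For each $\lambda \in S_M$ take $N(\lambda)$ as above and put $N^{*} := \max_{\lambda \in S_M} N(\lambda)$ (with $N^{*} := 0$ if $S_M = \varnothing$). For every $n \geq N^{*}$ and every $\lambda \in S_M$ we have $c_\lambda^{(n)} = c_\lambda$, so $f - f_n$ has vanishing coefficient at each $\lambda \leq M$, whence $\mathrm{ord}(f - f_n) > M$. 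As $M$ was arbitrary, $\mathrm{ord}(f - f_n) \to +\infty$, which by definition means $f_n \to f$.

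I do not expect any genuine obstacle here; the proof is purely bookkeeping, and the only two points requiring attention are the following. First, one must check that the coefficientwise limits exist before assembling $f$, which is exactly where the hypothesis is phrased in terms of $\mathrm{ord}(f_{n+1}-f_n)$ (this is equivalent to the general Cauchy condition $\mathrm{ord}(f_m - f_n) \to \infty$ by iterating Proposition \ref{p2}(1), but the telescoping form is the convenient one). Second, to obtain a single threshold $N^{*}$ that controls all coefficients of order at most $M$ simultaneously, one genuinely needs $S_M$ to be finite, i.e. the discreteness of $\Lambda$ supplied by Proposition \ref{p1}(1); without it the indices $N(\lambda)$ need not be uniformly bounded and the convergence statement could fail. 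Everything else is a direct comparison of coefficients.
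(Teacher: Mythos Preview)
Your proof is correct and follows the same coefficient-wise construction as the paper. One minor simplification: the appeal to discreteness in the second paragraph is unnecessary, since any single $N$ with $\mathrm{ord}(f_{n+1}-f_n) > M$ for all $n \geq N$ already forces $c_\mu^{(n)} = c_\mu$ for every $\mu \leq M$ simultaneously, which is exactly how the paper argues.
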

\begin{proof}
Let $$f_n=\sum \limits _{\lambda \in
\Lambda}c_{n,\lambda}x^{\lambda}.$$
Since $$\lim \limits _{n\rightarrow +\infty} {\rm
ord}(f_{n+1}-f_n)=+\infty,$$for any $\lambda \in \Lambda$,
there exists $N\in \mathbb N$ such that
$${\rm ord}(f_{n+1}-f_n) >\lambda\ {\rm for}\ n>N.$$
This implies that
$$c_{n+1,\lambda}=c_{n,\lambda},\ {\rm when}\ n>N,$$ that is, for $\lambda$ being fixed, the sequence
$c_{n,\lambda}$ is constant for $n$ large enough.
Therefore, let $$f=\sum
\limits _{\lambda \in \Lambda} (\lim \limits _{n\rightarrow
+\infty}c_{n,\lambda})x^{\lambda}.$$ Then ${\rm ord}(f-f_n)>\lambda$
if $n>N$. Therefore $$\lim \limits _{n\rightarrow +\infty}f_n=f.$$
\end{proof}
For $f=\sum \limits _{\lambda \in \Lambda}c_{\lambda }x^{\lambda}\in \mathbb C[[x^{\Lambda }]]$, denote $c_0$  by $f(0)$.
\begin{corollary}\label{c1}
$f$ is invertible if and only if $f(0)\ne 0$.
\end{corollary}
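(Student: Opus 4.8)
The plan is to prove both directions using the completeness result (Proposition \ref{p4}) and the multiplicativity of order (Proposition \ref{p2}). The easy direction is necessity: if $fg = x^0$ for some $g \in \mathbb C[[x^{\Lambda}]]$, then by part (2) of Proposition \ref{p2} we get ${\rm ord}f + {\rm ord}g = {\rm ord}(x^0) = 0$. Since $\Lambda \subseteq \mathbb R_{\ge 0}$ (every element of $\Lambda$ is a non-negative real by its definition via $F \in \mathbb Z_{\ge 0}[x_1,\dots,x_m]$ evaluated at positive $\theta_i$), both orders are non-negative, hence both are $0$; in particular $f(0) \ne 0$.

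For sufficiency, suppose $f(0) = c_0 \ne 0$. After dividing by the constant $c_0$ (which is harmless for invertibility), I may assume $f = x^0 - h$ where ${\rm ord}\, h > 0$, i.e. $|h| < 1$. The natural candidate inverse is the geometric series $\sum_{n=0}^{\infty} h^n$. To make sense of this I would set $g_N = \sum_{n=0}^{N} h^n$ and observe that $g_{N+1} - g_N = h^{N+1}$, so by part (2) of Proposition \ref{p2}, ${\rm ord}(g_{N+1} - g_N) = (N+1)\,{\rm ord}\, h \to +\infty$ as $N \to \infty$ (here I use ${\rm ord}\, h > 0$). Hence by Proposition \ref{p4} the limit $g = \lim_{N \to \infty} g_N$ exists in $\mathbb C[[x^{\Lambda}]]$. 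Then $(x^0 - h) g_N = x^0 - h^{N+1}$, and since ${\rm ord}(h^{N+1}) \to +\infty$ while multiplication is continuous with respect to the metric (which follows from parts (1) and (2) of Proposition \ref{p3}: $|fg - fg'| = |f||g - g'|$), passing to the limit gives $(x^0 - h)g = x^0$. Thus $f = c_0(x^0 - h)$ has inverse $c_0^{-1} g$.

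The main point requiring a little care is the continuity of multiplication used in the last step — namely that $f_n \to f$ and $g_n \to g$ imply $f_n g_n \to fg$ in this metric. This is where the strong triangle inequality and multiplicativity of the valuation (Proposition \ref{p3}) do the work: one writes $f_n g_n - fg = f_n(g_n - g) + (f_n - f)g$ and bounds $|f_n g_n - fg| \le \max\{|f_n||g_n - g|,\, |f_n - f||g|\}$, which tends to $0$ since $|f_n|$ is eventually bounded. I expect no real obstacle here; the corollary is a formal consequence of the completeness and valuation-ring structure already established, exactly parallel to the classical fact that a formal power series is a unit iff its constant term is nonzero.
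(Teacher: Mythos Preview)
Your proposal is correct and follows essentially the same approach as the paper: both prove sufficiency by writing $f = f(0)(1 \pm h)$ with ${\rm ord}\,h > 0$ and inverting via the geometric series, invoking Proposition~\ref{p4} for convergence. Your treatment is in fact slightly more careful than the paper's, which simply asserts that $1 + \sum_{i\ge 1}(-1)^i h^i$ is the inverse without explicitly justifying the passage to the limit in the product; your continuity-of-multiplication argument fills that small gap. For the forward direction the paper argues directly that $f(0)g(0)=1$, whereas you go through additivity of order---both are fine and equivalent.
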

\begin{proof}
If there exists $g\in \mathbb
C[[x^{\Lambda }]]$ such that $f\cdot g=1$, then $f(0)g(0)=1$. Therefore,
$f(0)\ne 0$.

Conversely, assume $f(0)\ne 0$. Write $$f=f(0)(1+h)\
{\rm with}\ {\rm ord} (h)>0.$$ By Proposition \ref {p4}, $$1+\sum \limits
_{i=1}^{+\infty}(-1)^ih^i$$ converges. Therefore, $$f^{-1}=f(0)^{-1}(1+\sum
\limits _{i=1}^{+\infty }(-1)^ih^i)\in \mathbb C[[x^\lambda]].$$

\end{proof}
\begin{corollary}\label{newc1}
For $n\ge 1$, assume ${\rm ord} f_n>0$ and $\lim \limits
_{n\rightarrow +\infty}{\rm ord}f_n=+\infty$. Then the infinite
product $\prod \limits _{n=1}^{+\infty}(1+f_n)$ converges.
\end{corollary}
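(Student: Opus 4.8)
The plan is to show that the sequence of partial products $P_N=\prod_{n=1}^{N}(1+f_n)$ is a Cauchy sequence in $\mathbb C[[x^{\Lambda}]]$ and then to invoke the completeness statement, Proposition \ref{p4}. By definition, the convergence of the infinite product $\prod_{n=1}^{+\infty}(1+f_n)$ is precisely the convergence of this sequence $\{P_N\}$ in the metric $d$. So it suffices to verify that ${\rm ord}(P_{N+1}-P_N)\to+\infty$ as $N\to+\infty$.

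To that end, I would use the telescoping identity
$$P_{N+1}-P_N=P_N\bigl((1+f_{N+1})-1\bigr)=P_N\, f_{N+1},$$
together with part (2) of Proposition \ref{p2}, which gives ${\rm ord}(P_{N+1}-P_N)={\rm ord}(P_N)+{\rm ord}(f_{N+1})$. Since each $f_n$ satisfies ${\rm ord}(f_n)>0$, the factor $1+f_n$ has constant term $1$, hence ${\rm ord}(1+f_n)=0$; applying part (2) of Proposition \ref{p2} repeatedly then yields ${\rm ord}(P_N)=\sum_{n=1}^{N}{\rm ord}(1+f_n)=0$. Therefore ${\rm ord}(P_{N+1}-P_N)={\rm ord}(f_{N+1})$, which tends to $+\infty$ by hypothesis.

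With this, Proposition \ref{p4} applies to $\{P_N\}$ and produces a limit $f\in\mathbb C[[x^{\Lambda}]]$, which by definition is the value of the infinite product. I expect no real obstacle here: the whole argument rests on the ultrametric (strong triangle) inequality together with completeness, and the only point one must observe is the elementary fact that all the partial products have order $0$, so that the size of the increment $P_{N+1}-P_N$ is governed entirely by ${\rm ord}(f_{N+1})$. A slight variant would be to bound ${\rm ord}(P_M-P_N)$ for $M>N$ directly, using part (1) of Proposition \ref{p3} applied to the telescoping sum $\sum_{k=N}^{M-1}(P_{k+1}-P_k)$; but passing through consecutive differences via Proposition \ref{p4} is cleaner.
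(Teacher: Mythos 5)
Your proof is correct and follows essentially the same route as the paper: both compute $P_{N+1}-P_N=f_{N+1}P_N$, observe that its order equals ${\rm ord}\,f_{N+1}$ (since ${\rm ord}\,P_N=0$), and conclude by the completeness statement, Proposition \ref{p4}. The only difference is that you spell out why ${\rm ord}\,P_N=0$, which the paper leaves implicit.
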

\begin{proof}
Since $$\prod \limits _{n=1}^{m+1}(1+f_n)-\prod \limits
_{n=1}^{m}(1+f_n)=f_{m+1}\prod \limits _{n=1}^{m}(1+f_n),$$ its order
equals to ${\rm ord} f_{m+1}$. By the assumption, $$\lim \limits
_{m\rightarrow +\infty}{\rm ord}(\prod \limits
_{n=1}^{m+1}(1+f_n)-\prod \limits _{n=1}^{m}(1+f_n))=+\infty.$$ By
Proposition \ref {p4}, we get the desired result.

\end{proof}
Let $$f(x)=\sum \limits _{\lambda \in \Lambda}c_{\lambda
}x^{\lambda}\in \mathbb C[[x^{\Lambda }]].$$ Then $$\sum \limits
_{\lambda \in \Lambda}c_{\lambda }x^{\lambda \theta _i}\in \mathbb
C[[x^{\Lambda }]],$$ by (3) of Proposition \ref {p1}, where
$i=1,\cdots ,m$. Then define $$f(x^{\theta _i})=\sum \limits _{\lambda \in
\Lambda}c_{\lambda }x^{\lambda \theta _i}.$$ This can be viewed as changing variable ``$x$" by ``$x^\theta$". It
is easy to see that the map $f(x)\mapsto f(x^{\theta _i})$ is a continuous ring
homomorphism of $\mathbb C[[x^{\Lambda }]]$.
\begin{definition}\label{d5}
Let $$f(x)=\sum \limits _{\lambda \in \Lambda}c_{\lambda }x^{\lambda}\in \mathbb C[[x^{\Lambda }]].$$ Define the derivative of $f$ by $$xf'(x)=
\sum \limits _{\lambda \in \Lambda}\lambda c_{\lambda }x^{\lambda}\in \mathbb C[[x^{\Lambda }]].$$
\end{definition}
\begin{remark}\label{r2}
In Definition \ref {d5}, we multiply the usual derivative $f'$ by
$x$ to make sure $xf'\in \mathbb C[[x^{\Lambda}]]$.
\end{remark}
\begin{proposition}\label{p5}
Let $f,\ g\in \mathbb C[[x^{\Lambda }]]$. Then
\begin{equation*}\begin{aligned}
&(1)\ x(f+g)'=xf'+xg'.\\
&(2)\ x(f\cdot g)'=xf'\cdot g +f\cdot xg'.\\
&(3)\ x(f(x^{\theta _i}))'=\theta _i(xf')(x^{\theta _i}),\ {\rm for}\
i=1,\cdots ,m.\\
&(4)\ \lim\limits _{n\rightarrow +\infty} (xf_n')=x(\lim \limits _{n\rightarrow +\infty}f_n)',\
{\rm if}\ \lim\limits _{n\rightarrow +\infty }f_n\ {\rm exists}. \ \ \ \ \ \ \ \ \ \ \ \ \ \ \ \ \ \ \ \ \ \ \ \ \ \ \ \ \ \ \ \ \ \ \ \  \ \ \  \ \ \ \ \ \ \
\end{aligned}\end{equation*}
\end{proposition}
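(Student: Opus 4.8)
The plan is to verify all four identities directly from Definition \ref{d5}, comparing coefficients of $x^\lambda$ on both sides; the point is that every operation in sight acts diagonally on the monomials $x^\lambda$, so these are essentially bookkeeping.

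For (1), I would write $f=\sum_{\lambda\in\Lambda}c_\lambda x^\lambda$ and $g=\sum_{\lambda\in\Lambda}c_\lambda'x^\lambda$; by Definition \ref{d2} the $x^\lambda$-coefficient of $f+g$ is $c_\lambda+c_\lambda'$, so that of $x(f+g)'$ is $\lambda(c_\lambda+c_\lambda')=\lambda c_\lambda+\lambda c_\lambda'$, which is exactly the $x^\lambda$-coefficient of $xf'+xg'$. For (2), recall that the $x^\lambda$-coefficient of $f\cdot g$ is $\sum_{\mu+\nu=\lambda}c_\mu c_\nu'$, a finite sum by (2)--(3) of Proposition \ref{p1}, hence that of $x(f\cdot g)'$ is $\lambda\sum_{\mu+\nu=\lambda}c_\mu c_\nu'$; on the other hand the $x^\lambda$-coefficients of $xf'\cdot g$ and $f\cdot xg'$ are $\sum_{\mu+\nu=\lambda}\mu c_\mu c_\nu'$ and $\sum_{\mu+\nu=\lambda}\nu c_\mu c_\nu'$, and since $\mu+\nu=\lambda$ their sum is $\lambda\sum_{\mu+\nu=\lambda}c_\mu c_\nu'$, matching. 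For (3), $f(x^{\theta_i})=\sum_\lambda c_\lambda x^{\lambda\theta_i}$ gives $x(f(x^{\theta_i}))'=\sum_\lambda\lambda\theta_i c_\lambda x^{\lambda\theta_i}$, while $xf'=\sum_\lambda\lambda c_\lambda x^\lambda$, and substituting $x\mapsto x^{\theta_i}$ and multiplying by $\theta_i$ yields $\theta_i(xf')(x^{\theta_i})=\sum_\lambda\lambda\theta_i c_\lambda x^{\lambda\theta_i}$, the same series.

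For (4), the observation I would use is that $f\mapsto xf'$ does not decrease order: if $h=\sum_\lambda d_\lambda x^\lambda$, then $xh'=\sum_\lambda\lambda d_\lambda x^\lambda$ and its set of nonzero coefficients is contained in that of $h$ (only the constant term, if present, can be annihilated), so ${\rm ord}(xh')\ge{\rm ord}(h)$. Granting $f=\lim_{n\to\infty}f_n$, the same coefficientwise computation as in (1) gives $xf_n'-xf'=x(f_n-f)'$, whence ${\rm ord}(xf_n'-xf')\ge{\rm ord}(f_n-f)\to+\infty$, i.e. $xf_n'\to xf'$, which by definition of the limit in $\mathbb C[[x^\Lambda]]$ is exactly the assertion.

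I do not expect a genuine obstacle: all four parts are mechanical, the multiplication identity in (2) being the only one with more than one line of computation, and the single conceptual point is the ``derivation does not decrease order'' remark needed for (4), which is immediate from the diagonal action on the $x^\lambda$.
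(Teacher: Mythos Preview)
Your proof is correct and complete; each of the four parts is handled by the straightforward coefficient comparison you describe, and the order inequality ${\rm ord}(xh')\ge{\rm ord}(h)$ is exactly the right ingredient for (4). The paper itself states Proposition~\ref{p5} without proof, so there is no ``paper approach'' to compare against --- your argument is the natural one and nothing more is needed.
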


\begin{definition}\label{d6}
Let $$f(x)=\sum \limits _{\lambda \in \Lambda}c_{\lambda }x^{\lambda}\in \mathbb C[[x^{\Lambda }]]\  {\rm with}\ f(0)\ne 0.$$ We call
$\frac {\displaystyle xf'}{\displaystyle f}$ the logarithmic derivative of $f$.
\end{definition}
\begin{remark}\label{r3}
If $f(0)=0$, from Corollary \ref{c1}, $f$ is not invertible. Hence,
$\frac {\displaystyle xf'}{\displaystyle f}$ may not belong to
$\mathbb C[[x^{\Lambda }]]$. This is different from the case of
$\mathbb C[[x]]$.
\end{remark}
By Proposition \ref {p5}, we have
\begin{proposition}\label{p6}
Let $f,\ g\in \mathbb C[[x^{\Lambda }]]$. Then
\begin{equation*}
\begin{aligned}
 &{\rm (1)}\ \frac {\displaystyle x(fg)'}{\displaystyle fg}
=\frac {\displaystyle xf'}{\displaystyle f} +\frac {\displaystyle xg'}{\displaystyle g}
\\
  &{\rm (2)}\ \frac {\displaystyle x(f(x^{\theta _i}))'}{\displaystyle
f(x^{\theta _i})} =\theta _i\left (\frac {\displaystyle
xf'}{\displaystyle f}\right ) (x^{\theta _i})\ {\rm for}\ i=1,\cdots , m.\ \ \ \ \ \ \ \ \ \ \ \ \ \ \ \ \ \ \ \ \
\end{aligned}
\end{equation*}
\end{proposition}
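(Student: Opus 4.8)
The plan is to obtain both identities as formal consequences of the product rule and the substitution rule of Proposition~\ref{p5}, combined with the invertibility criterion of Corollary~\ref{c1}. Before starting one should record the standing hypothesis implicit in Definition~\ref{d6}: the logarithmic derivatives in the statement only make sense when $f(0)\neq 0$ and $g(0)\neq 0$. Under this assumption $f$ and $g$ are invertible by Corollary~\ref{c1}; so is $fg$, since $(fg)(0)=f(0)g(0)\neq 0$; and so is $f(x^{\theta_i})$, whose constant term is again $f(0)$.

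For part~(1) I would start from Proposition~\ref{p5}(2), namely $x(fg)'=xf'\cdot g+f\cdot xg'$, an identity in $\mathbb C[[x^{\Lambda}]]$. Multiplying both sides by $(fg)^{-1}=f^{-1}g^{-1}$ and using that $\mathbb C[[x^{\Lambda}]]$ is commutative, the first summand becomes $xf'\cdot f^{-1}$ and the second becomes $xg'\cdot g^{-1}$, which is precisely $\frac{xf'}{f}+\frac{xg'}{g}$. This is nothing more than bookkeeping inside the ring.

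For part~(2) I would begin from Proposition~\ref{p5}(3), $x(f(x^{\theta_i}))'=\theta_i\,(xf')(x^{\theta_i})$. The one point deserving a remark is that inversion commutes with the substitution map $h(x)\mapsto h(x^{\theta_i})$: since this map is a ring homomorphism of $\mathbb C[[x^{\Lambda}]]$ (as recorded just before Definition~\ref{d5}), applying it to $f\cdot f^{-1}=1$ gives $f(x^{\theta_i})\cdot(f^{-1})(x^{\theta_i})=1$, hence $(f(x^{\theta_i}))^{-1}=(f^{-1})(x^{\theta_i})$. Multiplying the displayed identity by this inverse and using once more that the substitution map preserves products yields
\[
\frac{x(f(x^{\theta_i}))'}{f(x^{\theta_i})}
=\theta_i\,(xf')(x^{\theta_i})\cdot(f^{-1})(x^{\theta_i})
=\theta_i\,(xf'\cdot f^{-1})(x^{\theta_i})
=\theta_i\left(\frac{xf'}{f}\right)(x^{\theta_i}).
\]

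I do not expect any genuine obstacle: the whole content is formal once Proposition~\ref{p5} and Corollary~\ref{c1} are available. The only place requiring slight care is the commutation of inversion with the substitution homomorphism in part~(2), together with checking that all elements entering the computation ($f$, $g$, $fg$, $f(x^{\theta_i})$) are indeed invertible — which is exactly what the constant-term hypothesis and Corollary~\ref{c1} guarantee.
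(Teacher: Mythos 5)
Your proof is correct and follows essentially the same route as the paper, which simply derives Proposition~\ref{p6} from Proposition~\ref{p5} (product and substitution rules) without writing out the details; your careful remarks on invertibility via Corollary~\ref{c1} and on inversion commuting with the substitution homomorphism are exactly the bookkeeping the paper leaves implicit.
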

The following proposition shows that the logarithmic derivative transforms infinite products to infinite sums.
\begin{proposition}\label{p7}
For $n\ge 1$, assume ${\rm ord} f_n>0$ and $\lim \limits
_{n\rightarrow +\infty}{\rm ord }f_n=+\infty$. Then
\begin{equation*}
x(\prod \limits _{n=1}^{+\infty}(1+f_n))'(\prod \limits _{n=1}^{+\infty}(1+f_n))^{-1}
=\sum \limits _{n=1}^{+\infty} xf_n'(1+f_n)^{-1}.
\end{equation*}
\end{proposition}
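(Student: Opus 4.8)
The plan is to deduce the identity from the finite-product case, Proposition \ref{p6}(1), by a limiting argument inside the complete metric ring $\mathbb{C}[[x^{\Lambda}]]$.

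I would begin by collecting the relevant convergence facts. Since $\mathrm{ord}\,f_n>0$ we have $f_n(0)=0$, so each $1+f_n$ and the product $P:=\prod_{n=1}^{+\infty}(1+f_n)$ (which exists by Corollary \ref{newc1}) take the value $1$ at $0$; hence by Corollary \ref{c1} they are all invertible, and by Proposition \ref{p2}(2) their inverses have order $0$. Also, because $\mathrm{ord}\,f_n>0$, Definition \ref{d5} gives $\mathrm{ord}(xf_n')=\mathrm{ord}\,f_n$, so $\mathrm{ord}\bigl(xf_n'(1+f_n)^{-1}\bigr)=\mathrm{ord}\,f_n\to+\infty$; therefore the partial sums $S_N:=\sum_{n=1}^N xf_n'(1+f_n)^{-1}$ are Cauchy and, by Proposition \ref{p4}, converge to the right-hand side of the claimed identity.

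Next, writing $P_N:=\prod_{n=1}^N(1+f_n)$ and using $x(1+f_n)'=xf_n'$ together with Proposition \ref{p6}(1), an easy induction on $N$ yields the exact finite identity
$$\frac{xP_N'}{P_N}=\sum_{n=1}^N\frac{xf_n'}{1+f_n}=S_N.$$
It remains to let $N\to+\infty$ on the left. We have $P_N\to P$ by Corollary \ref{newc1}, hence $xP_N'\to xP'$ by Proposition \ref{p5}(4). For the inverses, from $P_N^{-1}-P^{-1}=P_N^{-1}P^{-1}(P-P_N)$ and $\mathrm{ord}(P_N^{-1})=\mathrm{ord}(P^{-1})=0$ we get $\mathrm{ord}(P_N^{-1}-P^{-1})=\mathrm{ord}(P-P_N)\to+\infty$, so $P_N^{-1}\to P^{-1}$. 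Finally, multiplication is continuous: from $a_Nb_N-ab=a_N(b_N-b)+(a_N-a)b$ and the fact that every element of $\Lambda$ is nonnegative, Proposition \ref{p2}(1) gives $\mathrm{ord}(a_Nb_N-ab)\ge\min\{\mathrm{ord}(b_N-b),\mathrm{ord}(a_N-a)\}\to+\infty$ whenever $a_N\to a$ and $b_N\to b$. Thus $xP_N'\,P_N^{-1}\to xP'\,P^{-1}$, and comparing with $S_N\to\sum_{n=1}^{+\infty}xf_n'(1+f_n)^{-1}$ and invoking uniqueness of limits completes the proof.

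There is no genuine obstacle here once the groundwork of Section 3 is in place; the argument is entirely formal. The only points needing a little care are the computation $\mathrm{ord}(xf_n')=\mathrm{ord}\,f_n$ (which uses $\mathrm{ord}\,f_n>0$ and is exactly what makes the right-hand series converge) and the continuity of the inversion and multiplication maps, which are not isolated as separate statements earlier but follow immediately from Proposition \ref{p2} and the strong triangle inequality; I would record these in a sentence rather than as a separate lemma.
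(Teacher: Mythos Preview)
Your proof is correct and follows essentially the same approach as the paper: establish the finite identity $xP_N'/P_N=S_N$ via Proposition~\ref{p6}(1), then pass to the limit using Proposition~\ref{p5}(4) for the derivative and the convergence $P_N^{-1}\to P^{-1}$. You supply more detail than the paper does on the continuity of multiplication and inversion and on the convergence of the right-hand side, all of which is correct and welcome.
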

\begin{proof}
By (1) of Proposition \ref {p6}, we have
\begin{equation}\label{IX}
x(\prod _{n=1}^{N}(1+f_n))'\cdot (\prod _{n=1}^{N}(1+f_n))^{-1}=\sum _{n=1}^{N}xf_n'(1+f_n)^{-1}.
\end{equation}
By (4) of Proposition \ref {p5}, we get
\begin{equation*}
\lim \limits _{N\rightarrow +\infty}x(\prod \limits _{n=1}^{N}(1+f_n))'=x(\prod \limits _{n=1}^{+\infty}(1+f_n))'.
\end{equation*}
Since
$$
\lim \limits _{N\rightarrow +\infty}\prod \limits _{n=1}^{N}(1+f_n)^{-1}=(\prod \limits _{n=1}^{+\infty}(1+f_n))^{-1},
$$
letting $N\rightarrow +\infty$ in (\ref {IX}), we get the desired result.
\end{proof}
\begin{proposition}\label{p8}
Let $f\in \mathbb C[[x^{\Lambda}]]$ with $f(0)\ne 0$. Then $$f\in \mathbb C[[x]]\Leftrightarrow
\frac {\displaystyle xf'}{\displaystyle f}\in \mathbb C[[x]].$$
\end{proposition}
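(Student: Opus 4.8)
The plan is to prove the two implications separately. For ``$\Rightarrow$'', suppose $f\in\mathbb C[[x]]$ with $f(0)\ne 0$. Then $f$ is a unit in $\mathbb C[[x]]$ by the usual theory of formal power series, and since $\mathbb C[[x]]$ is a subring of $\mathbb C[[x^{\Lambda}]]$ (as noted after Definition \ref{d2}), the inverse produced by Corollary \ref{c1} coincides with the ordinary $f^{-1}$, hence lies in $\mathbb C[[x]]$. Also $xf'\in\mathbb C[[x]]$ straight from Definition \ref{d5}, because the nonzero exponents appearing in $f$ are positive integers. Therefore $\frac{xf'}{f}=(xf')\cdot f^{-1}\in\mathbb C[[x]]$.

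For ``$\Leftarrow$'', write $f=\sum_{\lambda\in\Lambda}c_{\lambda}x^{\lambda}$ and put $g=\frac{xf'}{f}$, which by hypothesis lies in $\mathbb C[[x]]$. The defining property of the quotient gives the identity $xf'=gf$ in $\mathbb C[[x^{\Lambda}]]$. Reading off the coefficient of $x^{0}$ shows $g(0)=0\cdot c_{0}=0$, so $g=\sum_{n\ge 1}g_{n}x^{n}$. I would then argue by contradiction: if $f\notin\mathbb C[[x]]$, the set
$$S=\{\lambda\in\Lambda:\ \lambda\notin\mathbb Z_{\ge 0}\ \text{and}\ c_{\lambda}\ne 0\}$$
is nonempty, and by discreteness of $\Lambda$ (Proposition \ref{p1}) it has a least element $\lambda^{\ast}$, which satisfies $\lambda^{\ast}>0$ since every element of $\Lambda\setminus\mathbb Z_{\ge 0}$ is a positive non-integer.

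Now compare the coefficients of $x^{\lambda^{\ast}}$ in $xf'=gf$. The left side contributes $\lambda^{\ast}c_{\lambda^{\ast}}$. The right side contributes $\sum_{n\ge 1}g_{n}c_{\lambda^{\ast}-n}$, a finite sum. For each $n\ge 1$, the exponent $\lambda^{\ast}-n$ is either negative, in which case $c_{\lambda^{\ast}-n}=0$ because $\Lambda\subseteq\mathbb R_{\ge 0}$, or it is a non-negative real which is not an integer (as $\lambda^{\ast}$ is not) and is strictly smaller than $\lambda^{\ast}$; then $\lambda^{\ast}-n\notin S$ by minimality of $\lambda^{\ast}$, so again $c_{\lambda^{\ast}-n}=0$, regardless of whether $\lambda^{\ast}-n\in\Lambda$. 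Hence the right side contributes $0$, forcing $\lambda^{\ast}c_{\lambda^{\ast}}=0$ and thus $c_{\lambda^{\ast}}=0$, contradicting $\lambda^{\ast}\in S$. Therefore $f\in\mathbb C[[x]]$.

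The argument is essentially bookkeeping inside the ring $\mathbb C[[x^{\Lambda}]]$ set up in Section 3; the point that must be handled with a little care — the closest thing to an obstacle — is the observation that $g$ has zero constant term, so that in the convolution $gf$ an integer exponent coming from $g$ can never combine with the (by minimality of $\lambda^{\ast}$, purely integral) part of $f$ of order $<\lambda^{\ast}$ to reach the non-integer exponent $\lambda^{\ast}$. The use of discreteness of $\Lambda$ to guarantee that a minimal ``bad'' exponent exists is the other ingredient.
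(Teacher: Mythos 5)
Your proof is correct, but for the substantive direction it takes a genuinely different route from the paper. The paper proves only the ``if'' part, and does so by an explicit construction: writing $\frac{xf'}{f}=\sum_{n\ge1}c_nx^n$, it forms $g=\exp\bigl(\sum_{n\ge1}\frac{c_n}{n}x^n\bigr)\in\mathbb C[[x]]$, checks that $g$ has the same logarithmic derivative as $f$, and then concludes from $x\bigl(\frac{g}{f}\bigr)'\bigl(\frac{g}{f}\bigr)^{-1}=0$ that $g/f$ is a constant, so $f\in\mathbb C[[x]]$; this uses the formal $\exp$/$\log$ calculus set up just before the proposition and has the side benefit of producing the closed form $f=f(0)\exp\bigl(\sum_{n\ge1}\frac{c_n}{n}x^n\bigr)$. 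You instead argue by a minimal-counterexample coefficient comparison: from $xf'=gf$ you first read off $g(0)=0$ (using $f(0)\ne0$), then take the least non-integer exponent $\lambda^{\ast}$ with $c_{\lambda^{\ast}}\ne0$ (existence via the discreteness in Proposition \ref{p1}(1) together with $\Lambda\subseteq\mathbb R_{\ge0}$), and compare coefficients of $x^{\lambda^{\ast}}$ to force $\lambda^{\ast}c_{\lambda^{\ast}}=0$, a contradiction since $\lambda^{\ast}>0$. This is more elementary — it needs only the ring structure of $\mathbb C[[x^{\Lambda}]]$, not the exponential — and your key observations ($g$ has zero constant term, so integer exponents from $g$ convolved with the purely integral part of $f$ below $\lambda^{\ast}$ can never reach $\lambda^{\ast}$) are exactly the right ones. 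You also supply the easy ``only if'' direction, which the paper omits; your justification there (uniqueness of inverses and $\mathbb C[[x]]\subseteq\mathbb C[[x^{\Lambda}]]$ as a subring) is sound.
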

\begin{proof}
We only prove the ``if" part. Assume $$\frac {\displaystyle
xf'}{f}=\sum \limits _{n=1}^{+\infty}c_nx^n\in \mathbb C [[x]].$$ Let $$g=\exp (\sum
\limits _{n=1}^{\infty}\frac {\displaystyle c_n}{n}x^n) =\sum
\limits _{m=0}^{+\infty}\frac {\displaystyle 1}{\displaystyle m!}
(\sum \limits _{n=1}^{\infty}\frac {\displaystyle c_n}{n}x^n)^m.$$
By (4) of Proposition \ref {p5}, we have
\begin{equation*}
xg'=\exp(\sum \limits _{n=1}^{+\infty}\frac {\displaystyle c_n}{\displaystyle n}x^n)
\sum \limits _{n=1}^{+\infty}c_nx^n.
\end{equation*}
Therefore, $$\frac {\displaystyle xg'}{\displaystyle g}=\sum \limits _{n=1}^{+\infty}c_nx^n=\frac {\displaystyle
xf'}{\displaystyle f}.$$ Since $$x(\frac {\displaystyle
g}{\displaystyle f})'(\frac { g}{ f})^{-1}
=\frac {\displaystyle xf'}{\displaystyle f}-\frac {\displaystyle
xg'}{\displaystyle g}=0,$$ we get $\frac{\displaystyle g}{\displaystyle f}$ is a constant. Thus $f\in \mathbb C [[x]]$.
\end{proof}
 The following power series are well-known.
 \begin{equation*}
 \begin{aligned}
 \exp (x)&=\sum \limits _{n=0}^{+\infty}\frac {x^n}{n!},\\
 \log (1+x)&=\sum \limits _{n=1}^{+\infty}\frac {(-1)^{n-1}x^n}{n},\\
 (1+x)^{\alpha }&=\sum \limits _{n=0}^{+\infty}\left ( \begin {aligned}\alpha \\ n \\ \end {aligned}
 \right )x^n,\ {\rm where} \ \alpha \in \mathbb C, \\{\rm and}\
 &\left ( \begin {aligned}\alpha \\ n \\ \end {aligned}
 \right )=\frac{\alpha (\alpha-1)\cdots(\alpha-n+1)}{n(n-1)\cdots
 1}.
 \end{aligned}
 \end{equation*}

 Let $f\in \mathbb C[[x^{\Lambda}]]$ with ${\rm ord}(f)>0$. Then
we can define $\exp(f),\ \log (1+f),\ (1+f)^{\alpha}$ by replacing
$x$ with $f$ in the above expressions. It is easy to see the
equalities which hold for $\exp(x),\ \log (x),\ (1+x)^{\alpha }$
also hold for $\exp(f),\ \log (1+f),\ (1+f)^{\alpha}$. For example,
we have
\begin{equation*}
 \begin{aligned}
((1+f)^{\alpha })^{\beta }&=(1+f)^{\alpha \beta},\\
(1+f)^{\alpha }(1+f)^{\beta }&=(1+f)^{\alpha +\beta},\\
(1+f)^{\alpha }(1+g)^{\alpha }&=(1+f+g+fg)^{\alpha },\\
x\left((1+f)^{\alpha }\right)'&=\alpha(1+f)^{\alpha-1 }xf'
\end{aligned}
\end{equation*}
where ${\rm ord}(f),\ {\rm ord} (g)>0,\ {\rm and}\ \alpha ,\beta \in \mathbb
C$.

Finally, we prove the following proposition.
\begin{proposition}\label{p9}
For $n\ge 1$, let $\alpha _n,\ \beta _n \in \mathbb C$. Then
\begin{equation}\label{X}
\prod _{n=1}^{+\infty }(1-x^n)^{\alpha _n}=\prod _{n=1}^{+\infty }(1-x^n)^{\beta _n}
\end{equation}
if and only if $\alpha _n=\beta _n$ for all $n\ge 1$.
\end{proposition}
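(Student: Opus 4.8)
The \emph{``if''} direction is immediate, so I concentrate on the \emph{``only if''} part: assuming (\ref{X}), I want $\alpha_n=\beta_n$ for every $n\ge1$. I would argue by contradiction. First note that both infinite products converge by Corollary \ref{newc1}, since $\mathrm{ord}\big((1-x^n)^{\gamma}-1\big)=n$ for any nonzero exponent $\gamma$. Suppose $\alpha_n\ne\beta_n$ for some $n$ and let $n_0$ be the least such index. Then $(1-x^n)^{\alpha_n}=(1-x^n)^{\beta_n}$ for every $n<n_0$, and since the finite product $\prod_{n<n_0}(1-x^n)^{\alpha_n}$ has constant term $1$ and is therefore invertible (Corollary \ref{c1}), cancelling it from (\ref{X}) leaves $\prod_{n\ge n_0}(1-x^n)^{\alpha_n}=\prod_{n\ge n_0}(1-x^n)^{\beta_n}$.

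Next I would split off the $n_0$-th factor on each side and compare coefficients of $x^{n_0}$. For each $n>n_0$ we have $\mathrm{ord}\big((1-x^n)^{\alpha_n}-1\big)=n\ge n_0+1$, so by Proposition \ref{p2} the tail product $\prod_{n>n_0}(1-x^n)^{\alpha_n}$ equals $1$ plus a series of order $\ge n_0+1$, and similarly for the $\beta$-tail. Using the binomial expansions $(1-x^{n_0})^{\alpha_{n_0}}=1-\alpha_{n_0}x^{n_0}+(\text{order}\ge 2n_0)$ and $(1-x^{n_0})^{\beta_{n_0}}=1-\beta_{n_0}x^{n_0}+(\text{order}\ge 2n_0)$ and Proposition \ref{p2} once more, the coefficient of $x^{n_0}$ in $\prod_{n\ge n_0}(1-x^n)^{\alpha_n}$ is $-\alpha_{n_0}$ while that in $\prod_{n\ge n_0}(1-x^n)^{\beta_n}$ is $-\beta_{n_0}$. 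Hence $\alpha_{n_0}=\beta_{n_0}$, contradicting the choice of $n_0$, and so $\alpha_n=\beta_n$ for all $n$.

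An alternative, more in the spirit of the rest of the paper, is to take logarithmic derivatives: dividing, (\ref{X}) becomes $\prod_{n\ge1}(1-x^n)^{\gamma_n}=1$ with $\gamma_n=\alpha_n-\beta_n$, and applying $f\mapsto xf'/f$ via Propositions \ref{p6} and \ref{p7} gives $0=\sum_{n\ge1}\gamma_n\frac{-nx^n}{1-x^n}=-\sum_{N\ge1}\big(\sum_{d\mid N}d\,\gamma_d\big)x^N$, so $\sum_{d\mid N}d\,\gamma_d=0$ for every $N$, whence $\gamma_N=0$ by an easy induction on $N$. In either case the argument is short; the only point that needs attention is the bookkeeping with the infinite products and the accompanying order estimates, which is exactly what the machinery of Section 3 was built to supply, so I do not anticipate a genuine obstacle.
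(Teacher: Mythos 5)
Both of your arguments are correct. Your second, ``alternative'' argument is essentially the paper's own proof: the paper takes the logarithmic derivative of (\ref{X}) to get $-\sum_{n\ge1}\frac{\alpha_n n x^n}{1-x^n}=-\sum_{n\ge1}\frac{\beta_n n x^n}{1-x^n}$ and then peels off the lowest-order term repeatedly, which is the same computation as your divisor-sum identity $\sum_{d\mid N}d\,\gamma_d=0$ followed by induction on $N$ (yours is arguably stated more cleanly, since the induction is explicit). Your first argument, however, is a genuinely different and more elementary route: instead of invoking the logarithmic-derivative machinery of Propositions \ref{p6} and \ref{p7}, you cancel the invertible finite product of agreeing factors (Corollary \ref{c1}), use the order estimates of Proposition \ref{p2} and Corollary \ref{newc1} to isolate the tail, and read off the coefficient of $x^{n_0}$ directly from the binomial expansion, obtaining $-\alpha_{n_0}=-\beta_{n_0}$. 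This buys you independence from Proposition \ref{p6}--\ref{p7} (only the ring structure, orders, and convergence of infinite products are needed), at the cost of a little bookkeeping to justify splitting the infinite product as (finite part)$\times$(tail), which follows from continuity of multiplication in the metric of Section 3; the paper's log-derivative route avoids that splitting and fits the toolkit it reuses later in Theorems \ref{t2}--\ref{t4}. Either version is acceptable.
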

\begin{proof}
Taking the logarithmic derivatives of (\ref{X}), we get
\begin{equation}\label{XI}
-\sum _{n=1}^{+\infty}\frac {\alpha _n nx^n}{1-x^n}=-\sum _{n=1}^{+\infty}\frac {\beta _n nx^n}{1-x^n}.
\end{equation}
Comparing the coefficients of the lowest terms of (\ref{XI}), we get
$\alpha _1=\beta _1$. Then
\begin{equation}\label{XI1}
-\sum _{n=2}^{+\infty}\frac {\alpha _n nx^n}{1-x^n}=-\sum
_{n=2}^{+\infty}\frac {\beta _n nx^n}{1-x^n}.
\end{equation}
Repeating the same procedure, we obtain $\alpha _2=\beta _2,\cdots
,\alpha _n=\beta _n, \cdots $. This concludes the proof.
\end{proof}

\section {Solving equation with fractional power series}
From now on, the following notations will be used unless specification.
\begin{equation*}
\begin{aligned}
&\bullet \ \ 2\le b=b_0<b_1<\cdots ,<b_m,\ {\rm positive \ integers.}\\
&\bullet \ \ \theta _1=\frac {b_1}{b_0},\cdots ,\theta _m=\frac {b_m}{b_0}.\\
&\bullet \ \ \Lambda ,\ {\rm the\ lattice\ associated\ to}\ (b;\ \theta _1,\cdots ,\theta _m).\\
&\bullet \ \ \mathbb C[[x^{\Lambda }]],\ {\rm the\ ring\ of\ fractional\ power\ series\ with\ respect\ to\ }\Lambda .\\
&\bullet \ \ e=e_0,e_1,\cdots ,e_m,\ {\rm positive \ integers}.\\
&\bullet \ \  \nu _1=\frac{e_1}{e_0},\cdots ,\nu_m=\frac {e_m}{e_0}.\\
&\bullet \ \ \mathbb N'=\{N\in \mathbb N\mid p|N\Rightarrow p|b_0b_1\cdots b_m\}\\
&\bullet \ \ \mathbb Q'=\{\frac {n}{m}\mid n,m\in \mathbb N'\}.\\
&\bullet \ \ \mathbb Q_b=\{\frac {n}{b^t}\mid n\in \mathbb N,\ t\in \mathbb Z,\ t\ge 0\}.\\
&\bullet \ \ \mathbb Q_b'=\{\frac {n}{b^t}\mid n\in \mathbb N',\ t\in \mathbb Z,\ t\ge 0\}.\\
&\bullet \ \ G(x)=\prod _{i=1}^{t}(1-\alpha _ix)^{n_i},\ {\rm with}\ \alpha _i '{\rm s\ distict,\ nonzero}.\\
&\bullet \ \ S=\{\alpha _i\mid \alpha _i^n=1\ {\rm for\ some}\ n\in \mathbb N',\ G(\frac {1}{\alpha _i})=0\}.\\
&\bullet \ \ H(x)=\prod _{\alpha _i\in S}(1-\alpha _ix)^{n_i}\ \ {\rm the}\ \mathbb N'{\rm -cyclotomic\ part\ of}\ G(x).\\
&\bullet \ \ P(x)\in \mathbb Z[x],\ P(0)=1,\ P(1)\ne 0.\\
&\bullet \ \ \lambda\mid\mu\Leftrightarrow\mu\lambda^{-1}\in \mathbb N,\ {\rm where}\ \lambda,\mu\in \mathbb Q,\ {\rm and}\ \lambda,\mu>0.\\
\end{aligned}
\end{equation*}

In this section, we will prove that the equation
\begin{equation}\label{XII}
f(x^{b_0})^{e_0}f(x^{b_1})^{e_1}\cdots f(x^{b_m})^{e_m}=G(x)
\end{equation}
has a unique solution $f(x)$ in $\mathbb C[[x^{\Lambda}]]$ with $f(0)=1$.

By taking the logarithmic derivative of $f$, we give a criterion for $f$ being a power series. As a corollary, we show if (\ref{XII}) has a
power series solution, then the  equation
\begin{equation}\label{XIII}
g(x^{b_0})^{e_0}g(x^{b_1})^{e_1}\cdots g(x^{b_m})^{e_m}=H(x)
\end{equation}
has a power series solution $g(x)$ with $g(0)=1$, where $H(x)$ is the $\mathbb N'$-cyclotomic part of $G(x)$.

Moreover, if $H(x)$
has the following form
\begin{equation}\label{XIV}
H(x)=\prod _{d\in \mathbb N'}(1-x^d)^{m_d},\ m_d\in \mathbb Z,\ m_d=0\ {\rm for}\ d\gg 0,
\end{equation}
then the power series solution of (\ref {XIII}) (if it exists) can be explicitly given by
\begin{equation}\label{XV}
g(x)=\prod _{d\in \mathbb N'}(1-x^d)^{g_d},\ g_d\in \mathbb Q.
\end{equation}

Finally, under some conditions on $b_0,b_1,\cdots,b_m$, we show
$g(x)$ is almost rational, that is, $$g_d=0\ {\rm for}\ d\gg 0,$$ in Equation
(\ref{XV}) (see Theorem \ref{t5}).

In the next section, under certain conditions on $H(x)$, we will
show that $g(x)$, the solution of (\ref{XIII}), can not be almost
rational (see Theorem \ref{main}). A contradiction!

This finally leads to the proof of Theorem \ref{mainnew} if we apply
the above results to
 the case
\begin{equation}\label{XV1}
G(x)=\frac {P(x)}{1-x}.
\end{equation}
and assume the existence of a prime $p$ and a positive integer $t$
such that $p^t\mid b_0,\ {\rm but}\ p^t\nmid b_{i}\ {\rm for}\ 1\leq
i\leq m. $

\begin{theorem}\label{t1}
The equation
\begin{equation}\label{XVI}
f(x^{b_0})^{e_0}f(x^{b_1})^{e_1}\cdots f(x^{b_m})^{e_m}=G(x)
\end{equation}
has a unique solution $f(x)\in \mathbb C[[x^{\Lambda }]]$ with $f(0)=1$. In fact,
\begin{equation}\label{XVII}
f(x)=\prod _{k=0}^{+\infty }\left (\prod _{1\le i_1,\cdots ,i_k\le m}G(x^{b^{-1}\theta _{i_1}\cdots ,\theta _{i_k}})^{e^{-1}\nu _{i_1}\cdots \nu _{i_k}}\right )^{(-1)^k}.
\end{equation}
\end{theorem}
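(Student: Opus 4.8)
The formula (\ref{XVII}) is the limit of a Moser-type iteration, so my plan is: (1) check that the infinite product in (\ref{XVII}) converges in $\mathbb C[[x^\Lambda]]$; (2) exhibit it as the fixed point, obtained by iterating from $1$, of an operator $\mathcal F$ whose fixed points with constant term $1$ solve (\ref{XVI}); (3) deduce uniqueness directly from (\ref{XVI}). For (1): the $k$-th factor of (\ref{XVII}) is a \emph{finite} product of $m^k$ terms $G(x^{b^{-1}\theta_{i_1}\cdots\theta_{i_k}})^{e^{-1}\nu_{i_1}\cdots\nu_{i_k}}$, and since $G(0)=1$ each such term is $1+(\text{something of order}\ge b^{-1}\theta_{i_1}\cdots\theta_{i_k}\ge b^{-1}\theta_{\min}^k)$, where $\theta_{\min}=\min_j\theta_j>1$. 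Hence the $k$-th factor is $1+h_k$ with ${\rm ord}(h_k)\ge b^{-1}\theta_{\min}^k\to+\infty$, and Corollary \ref{newc1} yields convergence to some $f\in\mathbb C[[x^\Lambda]]$ with $f(0)=1$. That all exponents involved lie in $\Lambda$ uses only $\mathbb Z_{\ge0}\subseteq\Lambda$ and closure of $\Lambda$ under addition and under multiplication by the $\theta_j$ (Proposition \ref{p1}).

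For (2), I would set
\[
\mathcal F(g):=G(x^{b^{-1}})^{e^{-1}}\prod_{j=1}^m g(x^{\theta_j})^{-\nu_j},
\]
a well-defined element of $\mathbb C[[x^\Lambda]]$ whenever $g(0)=1$: here $G(x^{b^{-1}})=\sum_n g_n x^{n/b}\in\mathbb C[[x^\Lambda]]$ (with $G(x)=\sum_n g_n x^n$, $g_0=1$), and the fractional powers make sense because their bases are units with constant term $1$ (Corollary \ref{c1}). This $\mathcal F$ is what one gets, as in Moser's argument, by dividing (\ref{XVI}) through, extracting the $e_0$-th root, and formally substituting $x\mapsto x^{1/b_0}$; conversely, if $f=\mathcal F(f)$ then applying the ring homomorphism $g\mapsto g(x^{b_0})$ (which sends $x^{b^{-1}}$ to $x$ and $x^{\theta_j}$ to $x^{b_j}$) and raising to the power $e_0$ recovers (\ref{XVI}), so it suffices to produce a fixed point of $\mathcal F$ with constant term $1$. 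The key estimate will be
\[
{\rm ord}\bigl(\mathcal F(g_1)-\mathcal F(g_2)\bigr)\ge\theta_{\min}\cdot{\rm ord}(g_1-g_2),
\]
which holds because $x\mapsto x^{\theta_j}$ multiplies orders by $\theta_j$, because $(1+a)^{-\nu_j}-(1+b)^{-\nu_j}$ has the same order as $a-b$ when ${\rm ord}(a),{\rm ord}(b)>0$ (its lowest term is $-\nu_j(a-b)$ and $\nu_j\ne0$), because a difference $\prod_j A_j-\prod_j B_j$ telescopes, and because multiplying by the unit $G(x^{b^{-1}})^{e^{-1}}$ preserves order. Then, iterating from $f_0=1$, one gets ${\rm ord}(f_{n+1}-f_n)\ge\theta_{\min}^n\,{\rm ord}(f_1-f_0)\to+\infty$, so $(f_n)$ is Cauchy and converges (Proposition \ref{p4}) to some $f$; the same estimate gives ${\rm ord}(\mathcal F(f)-f_{n+1})\ge\theta_{\min}\,{\rm ord}(f-f_n)\to+\infty$, so $f=\mathcal F(f)$ and hence $f$ solves (\ref{XVI}). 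Finally a routine induction, using only the identities $(1+h)^\alpha(1+h')^\alpha=(1+h+h'+hh')^\alpha$ and $((1+h)^\alpha)^\beta=(1+h)^{\alpha\beta}$ from Section 3, identifies $f_n$ with the $n$-th partial product of (\ref{XVII}): the step $f_{n+1}=\mathcal F(f_n)$ prepends an index $j$ to each tuple $(i_1,\dots,i_k)$, turning $\theta_j\theta_{i_1}\cdots\theta_{i_k}$ and $\nu_j\nu_{i_1}\cdots\nu_{i_k}$ into the data of $(j,i_1,\dots,i_k)$ and shifting $k$ to $k+1$. Hence $f=\lim_{n\to\infty}f_n$ is precisely (\ref{XVII}).

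For (3): given any solution $\tilde f$ of (\ref{XVI}) with $\tilde f(0)=1$, put $u=f\tilde f^{-1}\in\mathbb C[[x^\Lambda]]$ (Corollary \ref{c1}), so $u(0)=1$; dividing the two instances of (\ref{XVI}) and using that $g\mapsto g(x^{b_i})$ is a ring homomorphism gives $\prod_{i=0}^m u(x^{b_i})^{e_i}=1$. If $u\ne1$, set $d={\rm ord}(u-1)\in(0,+\infty)$ and let $a\ne0$ be the coefficient of $x^d$ in $u$; then $u(x^{b_i})^{e_i}=1+e_i a\,x^{b_i d}+(\text{order}>b_i d)$, and since $b_0<b_1<\cdots<b_m$ the term of least positive order in $\prod_{i=0}^m u(x^{b_i})^{e_i}$ is $e_0 a\,x^{b_0 d}$ with $e_0 a\ne0$, contradicting $\prod_{i=0}^m u(x^{b_i})^{e_i}=1$. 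Hence $u=1$, i.e.\ $f$ is the unique solution.

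The main difficulty is technical rather than conceptual: one must carry the order estimates cleanly through the substitutions $x\mapsto x^{\theta_j}$ (which act multiplicatively on orders) and the fractional powers $(\cdot)^{-\nu_j}$, and invoke the completeness and continuity statements of Section 3 to legitimize every limit, infinite product, and re-indexing. The geometric growth of the tails (orders bounded below by constant multiples of $\theta_{\min}^k$) is exactly what makes these manipulations valid; granting that, the remaining algebra is the telescoping displayed above.
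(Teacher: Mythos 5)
Your proposal is correct and follows essentially the same route as the paper: the same Moser-type rearrangement of (\ref{XVI}) into an iteration, convergence via the completeness results of Section 3 (Proposition \ref{p4}, Corollary \ref{newc1}), identification of the iterates with the partial products of (\ref{XVII}), and the identical uniqueness argument comparing the lowest nonzero term $e_0c_\mu x^{b_0\mu}$. Your contraction-mapping dressing of the existence step is only a cosmetic variant of the paper's direct telescoping verification that the infinite product satisfies the rescaled equation.
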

\begin{proof}
{\bf Existence}: Recall $b=b_0,e=e_0$. Substituting $x$ by $x^{\frac
{1}{b}}$ and taking the $e$-th root of both sides of Equation (\ref
{XVI}), we get
\begin{equation}\label{XVIII}
f(x)f(x^{\theta _1})^{\nu _1}\cdots f(x^{\theta _m})^{\nu _m}=G(x^{\frac {1}{b}})^{\frac {1}{e}}.
\end{equation}
Then
\begin{equation}\label{XIX}
f(x)=G(x^{\frac {1}{b}})^{\frac {1}{e}}(f(x^{\theta _1})^{\nu _1}\cdots f(x^{\theta _m})^{\nu _m})^{-1}.
\end{equation}

Let $\mathcal F(f)=f(x^{\theta _1})^{\nu _1}\cdots f(x^{\theta
_m})^{\nu _m}$ and view it as an operator on $\mathbb C[[x^{\Lambda
}]]$. Obviously, $\mathcal F$ is multiplicative. Rewrite Equation (\ref{XIX}) as
 \begin{equation}\label{XX}
 f(x)=G(x^{\frac {1}{b}})^{\frac {1}{e}}\mathcal F(f)^{-1}.
 \end{equation}
 Then iterate£º
 \begin{equation}\label{XXI}
 \begin{aligned}
 f(x)&=G(x^{\frac {1}{b}})^{\frac {1}{e}}\mathcal F(G(x^{\frac {1}{b}})^{\frac {1}{e}})^{-1}\mathcal F^2(f)\\
 &=\cdots \cdots \\
 &=\prod _{k=0}^{n-1}\mathcal F^k(G(x^{\frac {1}{b}})^{\frac {1}{e}})^{(-1)^k}\mathcal F^n(f)^{(-1)^n}.\\
 \end{aligned}
 \end{equation}

 Since $$\mathcal F^n(f)=\prod \limits _{1\le i_1,\cdots ,i_n\le m}f(x^{\theta_{i_1} \cdots \theta_{i_n}})^{\nu _{i_1}\cdots \nu
 _{i_n}},$$
 we have $\lim \limits _{n\rightarrow +\infty}\mathcal F^n(f)=1.$

 Letting $n\rightarrow +\infty $ in Equation (\ref{XXI}),
 by Corollary \ref{newc1}, we get
 \begin{equation}\label{XXII}
 \begin{aligned}
 f(x)&=\prod _{k=0}^{+\infty }\mathcal F^k(G(x^{\frac {1}{b}})^{\frac {1}{e}})^{(-1)^k}\\
&=\prod _{k=0}^{+\infty }\left (\prod _{1\le i_1,\cdots ,i_k\le m}G(x^{b^{-1}\theta _{i_1}\cdots ,\theta _{i_k}})^{e^{-1}\nu _{i_1}\cdots \nu _{i_k}}\right )^{(-1)^k}.\\
 \end{aligned}
 \end{equation}
 Substituting (\ref{XXII}) into (\ref {XVIII}), we get
 \begin{equation*}\begin{aligned}f\cdot\mathcal F(f)=\prod _{k=0}^{+\infty}\mathcal F^k(G(x^{\frac {1}{b}})^{\frac
 {1}{e}})^{(-1)^k}\prod _{k=0}^{+\infty}\mathcal F^{k+1}(G(x^{\frac {1}{b}})^{\frac
 {1}{e}})^{(-1)^k}=G(x^{\frac {1}{b}})^{\frac
 {1}{e}}.
  \end{aligned}
 \end{equation*}
 So, (\ref{XXII}) is really a solution of (\ref{XVI}).

 {\bf Uniqueness}: Assume $f^{*}\in \mathbb C[[x^{\Lambda }]]$ is
 another solution of (\ref{XVI}) with $f^{*}(0)=1$, then
 \begin{equation}\label{XXIII}
 \frac {f}{f^*}(x^{b_0})^{e_0} \frac {f}{f^*}(x^{b_1})^{e_1}\cdots  \frac
 {f}{f^*}(x^{b_m})^{e_m}=1.
 \end{equation}
Write $$\frac {\displaystyle f}{\displaystyle
f^*}=1+c_{\mu}x^{\mu}+\sum \limits _{\lambda
>\mu }c_{\lambda }x^{\lambda}$$ with $c_{\mu }\ne 0$. Then the right
hand side of Equation (\ref{XXIII}) is $$1+e_0c_{\mu
}x^{b_0\mu}+``{\rm higher\ order\ terms}".$$ A contradiction. So
$f^*=f$.
\end{proof}
\begin{theorem}\label{t2}
The solution (\ref{XVII}) is a power series if and only if for any
 any $\lambda \in \mathbb Q_b'$ satisfying
$\lambda \not \in \mathbb N'$, any $u\in \mathbb N,\ (u,b_0\cdots
b_m)=1$, and any $\beta \in \mathbb C^{*}$, the following equation
\begin{equation}\label{XXIV}
\begin{aligned}
\sum _{k=0}^{+\infty} (-1)^k\sum _{b^{-1}\theta _{i_1}\cdots \theta
_{i_k}\mid \lambda} &\frac {\nu _{i_1}\cdots \nu_{i_k}}{e}\frac
{\theta _{i_1}\cdots \theta_{i_k}} {b}\\&\times\sum _{\alpha
_i^{\lambda b \theta _{i_1}^{-1}\cdots  \theta _{i_k}^{-1}b_0\cdots
b_m}=\beta } n_i\alpha _i^{\lambda ub \theta _{i_1}^{-1}\cdots
\theta _{i_k}^{-1}}=0 \end{aligned}\end{equation}holds, where
$b^{-1}\theta _{i_1}\cdots \theta _{i_k}\mid \lambda$ means their
quotient is a positive integer.
\end{theorem}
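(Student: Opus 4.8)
The plan is to pass to the logarithmic derivative and apply Proposition \ref{p8}, which replaces ``$f\in\mathbb C[[x]]$'' by ``$xf'/f\in\mathbb C[[x]]$'', and then to identify exactly which coefficients of $xf'/f$ are forced to vanish.

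First I would compute $xf'/f$ directly from (\ref{XVII}). Taking the logarithmic derivative of that infinite product term by term — legitimate by Propositions \ref{p6} and \ref{p7} — and using that $G=\prod_i(1-\alpha_ix)^{n_i}$ gives $xG'/G=-\sum_{n\ge1}\bigl(\sum_i n_i\alpha_i^n\bigr)x^n$, one obtains
\[
\frac{xf'}{f}=-\sum_{k=0}^{\infty}(-1)^k\sum_{1\le i_1,\dots,i_k\le m}\frac{\nu_{i_1}\cdots\nu_{i_k}}{e}\,\frac{\theta_{i_1}\cdots\theta_{i_k}}{b}\sum_{n\ge1}\Bigl(\sum_i n_i\alpha_i^n\Bigr)x^{nb^{-1}\theta_{i_1}\cdots\theta_{i_k}} .
\]
Convergence in $\mathbb C[[x^{\Lambda}]]$ and finiteness of the contributions to any fixed exponent follow from Proposition \ref{p1}(1) together with $b^{-1}\theta_{i_1}\cdots\theta_{i_k}\ge b_0^{-1}(b_1/b_0)^k\to\infty$. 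By Proposition \ref{p8}, $f\in\mathbb C[[x]]$ if and only if, for every $\lambda\notin\mathbb N$, the coefficient of $x^{\lambda}$ vanishes, that is,
\[
\sum_{k=0}^{\infty}(-1)^k\sum_{b^{-1}\theta_{i_1}\cdots\theta_{i_k}\mid\lambda}\frac{\nu_{i_1}\cdots\nu_{i_k}}{e}\,\frac{\theta_{i_1}\cdots\theta_{i_k}}{b}\sum_i n_i\alpha_i^{\lambda b\theta_{i_1}^{-1}\cdots\theta_{i_k}^{-1}}=0 .
\]

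Next I would cut down the range of $\lambda$ and bring in the parameter $u$. Every exponent above is of the form $n\,b_{i_1}\cdots b_{i_k}/b_0^{k+1}$, so its reduced denominator involves only primes dividing $b_0$; factoring the numerator as $u\cdot(\text{rest})$ with $u$ the part coprime to $b_0\cdots b_m$ writes $\lambda=u\mu$ with $\mu\in\mathbb Q_b'$, and $\lambda\notin\mathbb N$ is equivalent to $\mu\notin\mathbb N'$. Since $(u,b_0\cdots b_m)=1$, for each multi-index the relation $b^{-1}\theta_{i_1}\cdots\theta_{i_k}\mid u\mu$ is equivalent to $b^{-1}\theta_{i_1}\cdots\theta_{i_k}\mid\mu$, and the exponent of $\alpha_i$ becomes $u\,n_k$ with $n_k:=\mu b\theta_{i_1}^{-1}\cdots\theta_{i_k}^{-1}\in\mathbb N$. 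Hence vanishing of all non-integral coefficients of $xf'/f$ is equivalent to $C(u)=0$ for all $\mu\in\mathbb Q_b'\setminus\mathbb N'$ and all $u\in\mathbb N$ with $(u,b_0\cdots b_m)=1$, where $C(u)$ is the finite sum $\sum_k(-1)^k\sum_{b^{-1}\theta_{i_1}\cdots\theta_{i_k}\mid\mu}\frac{\nu_{i_1}\cdots\nu_{i_k}}{e}\frac{\theta_{i_1}\cdots\theta_{i_k}}{b}\sum_i n_i\alpha_i^{un_k}$ (finitely many multi-indices satisfy $b^{-1}\theta_{i_1}\cdots\theta_{i_k}\le\mu$).

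Finally I would separate the $\beta$-components. Replacing $u$ by $u+r\,b_0\cdots b_m$ (again coprime to $b_0\cdots b_m$) multiplies each term $n_i\alpha_i^{un_k}$ by $(\alpha_i^{n_k b_0\cdots b_m})^r$, so $C(u+r\,b_0\cdots b_m)=\sum_{\beta}\beta^{r}D_{\beta}(u)$, where $D_{\beta}(u)$ is the sub-sum of $C(u)$ that, at level $k$, keeps only those $i$ with $\alpha_i^{n_k b_0\cdots b_m}=\beta$, and $\beta$ runs over the finite set of values so arising. As the $\beta$ are distinct and nonzero, a Vandermonde (equivalently, partial-fraction) argument shows that vanishing of $C(u+r\,b_0\cdots b_m)$ for all large $r$ forces $D_{\beta}(u)=0$ for every $\beta$; conversely $C(u)=\sum_\beta D_\beta(u)$. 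Since $D_{\beta}(u)=0$ is exactly (\ref{XXIV}) with $\lambda=\mu$, this yields both implications: if $f$ is a power series then $C\equiv0$, hence every $D_\beta\equiv0$, hence (\ref{XXIV}); and if (\ref{XXIV}) holds for all $(\mu,u,\beta)$ then $C\equiv0$, hence by the reductions above and Proposition \ref{p8}, $f\in\mathbb C[[x]]$. The routine parts are the bookkeeping in the first paragraph and the arithmetic of $\mathbb N'$, $\mathbb Q_b'$ and ``$\mid$'' in the second; the one genuinely delicate point is the last step, namely that shifting $u$ by multiples of $b_0\cdots b_m$ isolates the $\beta$-pieces via a nondegenerate linear system — this is precisely what forces the extra parameters $u$ and $\beta$ into the statement (\ref{XXIV}) rather than the cruder ``all non-integral coefficients vanish''.
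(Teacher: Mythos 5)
Your proposal is correct and follows essentially the same route as the paper: compute $xf'/f$ from (\ref{XVII}) via Propositions \ref{p6}--\ref{p7}, invoke Proposition \ref{p8}, reduce the non-integral exponents to the form $\lambda u$ with $\lambda\in\mathbb Q_b'-\mathbb N'$ and $(u,b_0\cdots b_m)=1$, then shift $u$ by multiples of $b_0\cdots b_m$ and separate the $\beta$-components by a Vandermonde argument, with the converse following since the full sum is the sum of its $\beta$-pieces. No substantive differences from the paper's proof.
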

\begin{proof}
We will compute the logarithmic derivative of $f$ by equation
(\ref{XVII}).

Since $$G(x)=\prod \limits_{i}(1-\alpha _ix)^{n_i},$$ we have
\begin{equation}\label{XXV}
\frac {xG(x)'}{G(x)}=\sum _i\frac {-n_i\alpha _ix}{1-\alpha
_ix}=-\sum _{n=1}^{+\infty}(\sum _in_i\alpha _i^{n})x^n.
\end{equation}

By (2) of Proposition \ref{p6}, for $1\le i_1,\cdots ,i_k\le m$,
\begin{equation}\label{XXVI}
\frac {xG(x^{b^{-1}\theta _{i_1}\cdots \theta _{i_k}})'}{G(x^{b^{-1}\theta _{i_1}\cdots \theta _{i_k}})}=
-\frac {\theta _{i_1}\cdots \theta _{i_k}}{b}\sum _{n=1}^{+\infty}(\sum _in_i\alpha _i^n)x^{nb^{-1}\theta _{i_1}\cdots \theta _{i_k}}.
\end{equation}

By Proposition \ref {p7}, Equation (\ref{XVII}) and Equation (\ref{XXVI}),
\begin{equation}\label{XXVII}\begin{aligned}
\frac {xf'}{f} =\sum_{k=0}^{+\infty} (-1)^k&\sum _{1\le i_1,\cdots
,i_k\le m}\frac {\nu _{i_1}\cdots \nu_{i_k}}{e}\frac {xG(x^{b^{-1}\theta _{i_1}\cdots \theta _{i_k}})'}{G(x^{b^{-1}\theta _{i_1}\cdots \theta _{i_k}})}
\\=- \sum_{k=0}^{+\infty} (-1)^k&\sum _{1\le i_1,\cdots
,i_k\le m}\frac {\nu _{i_1}\cdots \nu_{i_k}}{e}
\frac {\theta _{i_1}\cdots \theta_{i_k}} {b}\\&\times\sum _{n=1}^{+\infty}(\sum _in_i\alpha _i^n)x^{nb^{-1}\theta _{i_1}\cdots \theta _{i_k}}.\\
\end{aligned}\end{equation}
Let $\mu \in \mathbb Q$. The coefficient of $x^{\mu }$ in
$\frac {\displaystyle xf'}{\displaystyle f}$ is
\begin{equation}\label{XXVIII}
-\sum _{k=0}^{+\infty}(-1)^k\sum _{b^{-1}\theta _{i_1}\cdots \theta
_{i_k}\mid \mu} \frac {\nu _{i_1}\cdots \nu_{i_k}}{e}\frac
{\theta _{i_1}\cdots \theta_{i_k}} {b} \sum _{i } n_i\alpha
_i^{\mu b \theta _{i_1}^{-1}\cdots  \theta _{i_k}^{-1}}.
\end{equation}

By Proposition \ref{p8}, $f\in \mathbb C[[x]]$ if and only if $\frac
{\displaystyle xf'}{\displaystyle f}\in \mathbb C[[x]]$. Therefore,
$f\in \mathbb C[[x]]$ if and only if Equation (\ref{XXVIII}) is zero for all $\mu \in \mathbb Q-\mathbb N$.

If $\mu\not\in \mathbb Q_{b}$, Equation (\ref{XXVIII}) is automatically zero .

For $\mu \in \mathbb Q_{b}-\mathbb N$, it can be uniquely
written as
\begin{equation}\label{neweq1} \mu=\lambda\cdot u,\ {\rm where}\ \lambda\in \mathbb Q_b'-\mathbb N',\ {\rm and}\ u\in \mathbb N\
{\rm s.t.}\ (u,b_0\cdots b_m)=1.
\end{equation}
Substituting (\ref{neweq1}) into (\ref{XXVIII}), we get the coefficient of $x^{\mu }$ is
\begin{equation}\label{XXIXnew}
\begin{aligned}&\sum _{k=0}^{+\infty} (-1)^k\sum _{b^{-1}\theta _{i_1}\cdots \theta
_{i_k}\mid \lambda u} \frac {\nu _{i_1}\cdots \nu_{i_k}}{e}\frac
{\theta _{i_1}\cdots \theta_{i_k}} {b} \sum _{i} n_i\alpha
_i^{\lambda ub \theta _{i_1}^{-1}\cdots  \theta _{i_k}^{-1}}\\
=&\sum _{k=0}^{+\infty} (-1)^k\sum _{b^{-1}\theta _{i_1}\cdots \theta
_{i_k}\mid \lambda} \frac {\nu _{i_1}\cdots \nu_{i_k}}{e}\frac
{\theta _{i_1}\cdots \theta_{i_k}} {b} \sum _{i} n_i\alpha
_i^{\lambda ub \theta _{i_1}^{-1}\cdots  \theta _{i_k}^{-1}}
\end{aligned},
\end{equation}
since $\lambda\in \mathbb Q_b'$ and $u\in \mathbb N,\ {\rm s.t.}\
(u,b_0\cdots b_m)=1$.

Therefore,
$f\in \mathbb C[[x]]$ if and only if \begin{equation}\label{XXIX}
\sum _{k=0}^{+\infty} (-1)^k\sum _{b^{-1}\theta _{i_1}\cdots \theta
_{i_k}\mid \lambda} \frac {\nu _{i_1}\cdots \nu_{i_k}}{e}\frac
{\theta _{i_1}\cdots \theta_{i_k}} {b} \sum _{i} n_i\alpha
_i^{\lambda ub \theta _{i_1}^{-1}\cdots  \theta _{i_k}^{-1}}=0
\end{equation}
for all $\lambda \in \mathbb Q_b'-\mathbb N'$ and
all $u\in \mathbb N,\ {\rm s.t.}\ (u,b_0\cdots b_m)=1$.

In Equation (\ref{XXIX}), substituting $u$ with $u+nb_0\cdots b_m$,
we get
\begin{equation}\label{XXX}
\begin{aligned} &\sum _{k=0}^{+\infty} (-1)^k\sum _{b^{-1}\theta _{i_1}\cdots
\theta _{i_k}\mid \lambda} \frac {\nu _{i_1}\cdots
\nu_{i_k}}{e}\frac {\theta _{i_1}\cdots \theta_{i_k}} {b}
\\&\times\sum _{i} n_i\alpha _i^{\lambda ub \theta _{i_1}^{-1}\cdots
\theta _{i_k}^{-1}}(\alpha _i^{\lambda b \theta _{i_1}^{-1}\cdots
\theta _{i_k}^{-1}b_0\cdots b_m})^n=0.
\end{aligned}
\end{equation}

In Equation (\ref{XXX}), fixed $u,\ \lambda $ and letting $n$ vary,
we obtain a family of infinite equations indexed by $n\in \mathbb N$:
\begin{equation}\label{XXXI}\begin{aligned}
\sum _{\beta }\beta ^n\big(\sum _{k=0}^{+\infty} (-1)^k&\sum
_{b^{-1}\theta _{i_1}\cdots \theta _{i_k}\mid \lambda} \frac {\nu
_{i_1}\cdots \nu_{i_k}}{e}\frac {\theta _{i_1}\cdots \theta_{i_k}}
{b} \\ &\times\sum_{\alpha _i^{\lambda b \theta _{i_1}^{-1}\cdots
\theta _{i_k}^{-1}b_0\cdots b_m}=\beta }  n_i\alpha _i^{\lambda ub
\theta _{i_1}^{-1}\cdots \theta _{i_k}^{-1}}\big)=0.
\end{aligned}
\end{equation}

In (\ref{XXXI}), since $\beta$s are distinct, the Vandermonde
determinant $$\det (\beta ^n)_{\beta ,n}\ne 0.$$ Therefore, the
coefficient of $\beta^n$ in (\ref{XXXI}) should be zero, that is,
\begin{equation}\label{XXXII}\begin{aligned}
\sum _{k=0}^{+\infty} (-1)^k&\sum _{b^{-1}\theta _{i_1}\cdots \theta
_{i_k}\mid \lambda} \frac {\nu _{i_1}\cdots \nu_{i_k}}{e}\frac
{\theta _{i_1}\cdots \theta_{i_k}} {b} \\ &\times\sum_{\alpha
_i^{\lambda b \theta _{i_1}^{-1}\cdots \theta _{i_k}^{-1}b_0\cdots
b_m}=\beta }  n_i\alpha _i^{\lambda ub \theta _{i_1}^{-1}\cdots
\theta _{i_k}^{-1}}\big)=0,
\end{aligned}
\end{equation}
for all $\lambda \in \mathbb Q_b',\ \lambda \not \in \mathbb N';$
$u\in \mathbb N,\ (u,b_0\cdots b_m)=1;$ and $\beta \in \mathbb
C^{*}.$

It is easy to see that Equation (\ref{XXXII}) implies Equation (\ref{XXIX}). Thus, the proof is complete.
\end{proof}
\begin{corollary}\label{c2}
If Equation (\ref{XII}) has a solution $f(x)\in \mathbb C[[x]]$ with $f(0)=1$, then Equation (\ref{XIII})
has a solution $g(x)\in \mathbb C[[x]]$ with $g(0)=1$, where $$H(x)=\prod _{\alpha _i\in S}(1-\alpha _ix)^{n_i}$$ is the $\mathbb N'$-cyclotomic part
of $G(x)$.
\end{corollary}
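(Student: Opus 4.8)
The plan is to deduce the corollary directly from Theorem \ref{t2}, by comparing the solvability criterion (\ref{XXIV}) for $G$ with the one for $H$. Note first that $H(0)=\prod_{\alpha_i\in S}1^{n_i}=1$ and that $H$ is again a product $\prod(1-\alpha_i x)^{n_i}$ over distinct nonzero $\alpha_i$'s, so applying Theorem \ref{t1} with $H$ in place of $G$ already produces the unique $g\in\mathbb C[[x^{\Lambda}]]$ solving (\ref{XIII}) with $g(0)=1$ (namely the formula (\ref{XVII}) with $G$ replaced by $H$); the whole point is to prove $g\in\mathbb C[[x]]$. Since by hypothesis $f\in\mathbb C[[x]]$, Theorem \ref{t2} tells us that (\ref{XXIV}) holds for $G$ for every $\lambda\in\mathbb Q_b'\setminus\mathbb N'$, every $u\in\mathbb N$ with $(u,b_0\cdots b_m)=1$, and every $\beta\in\mathbb C^{*}$. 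Applying Theorem \ref{t2} with $H$ in place of $G$ amounts to the same criterion but with each inner sum restricted to those $i$ with $\alpha_i\in S$; so it suffices to show this restricted version of (\ref{XXIV}) still holds for all $\lambda,u,\beta$.

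The key point is an arithmetic observation about the exponents occurring in (\ref{XXIV}). Fix $\lambda\in\mathbb Q_b'\setminus\mathbb N'$ and indices $1\le i_1,\cdots,i_k\le m$ with $b^{-1}\theta_{i_1}\cdots\theta_{i_k}\mid\lambda$, and set
\[
N=\lambda\,b\,\theta_{i_1}^{-1}\cdots\theta_{i_k}^{-1}\,b_0\cdots b_m .
\]
Then $N\in\mathbb N'$: writing $\lambda=n/b^{s}$ with $n\in\mathbb N'$, $s\ge 0$, and $\theta_{i_j}^{-1}=b_0/b_{i_j}$, the number $\lambda b\,\theta_{i_1}^{-1}\cdots\theta_{i_k}^{-1}=n\,b_0^{\,k+1}/(b^{s}b_{i_1}\cdots b_{i_k})$ is a positive integer (that is exactly the meaning of $b^{-1}\theta_{i_1}\cdots\theta_{i_k}\mid\lambda$), all of whose prime factors divide $n\,b_0$ and hence divide $b_0b_1\cdots b_m$; so it lies in $\mathbb N'$, and multiplying by $b_0\cdots b_m\in\mathbb N'$ stays in $\mathbb N'$, which is closed under products and divisors. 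I shall also use that $\alpha_i\in S$ precisely when $\alpha_i$ is a root of unity with $\alpha_i^{r}=1$ for some $r\in\mathbb N'$ — that is, the $\alpha_i\in S$ are exactly the reciprocal roots and poles of $G$ of order in $\mathbb N'$, so that $H$ collects all cyclotomic factors of $G$ indexed by $\mathbb N'$.

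Now fix $\lambda,u$ and $\beta\in\mathbb C^{*}$, and split into two cases. If $\beta^{r}=1$ for some $r\in\mathbb N'$: then for every admissible $(i_1,\cdots,i_k)$, any $\alpha_i$ with $\alpha_i^{N}=\beta$ satisfies $\alpha_i^{Nr}=\beta^{r}=1$ with $Nr\in\mathbb N'$, so $\alpha_i\in S$; thus imposing ``$\alpha_i\in S$'' deletes no terms, and the restricted version of (\ref{XXIV}) agrees term by term with (\ref{XXIV}) for $G$, which holds. If $\beta^{r}\neq 1$ for every $r\in\mathbb N'$: then no $\alpha_i\in S$ can have $\alpha_i^{N}=\beta$ for an admissible $(i_1,\cdots,i_k)$, since choosing $r\in\mathbb N'$ with $\alpha_i^{r}=1$ would give $\beta^{r}=\alpha_i^{Nr}=1$, a contradiction; thus each inner sum in the restricted version of (\ref{XXIV}) is empty and the identity is $0=0$. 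Either way the restricted version of (\ref{XXIV}) holds for all $\lambda,u,\beta$, so by Theorem \ref{t2} (applied to $H$) we get $g\in\mathbb C[[x]]$, as required.

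The substance is entirely in the middle step: showing that every exponent $N$ arising in (\ref{XXIV}) lies in $\mathbb N'$, and drawing from this the dichotomy that, for a fixed $\beta\in\mathbb C^{*}$, either every $\alpha_i$ ``visible'' in (\ref{XXIV}) lies in $S$ (exactly when $\beta^{r}=1$ for some $r\in\mathbb N'$) or none of them does. Granting that dichotomy the corollary drops out, because the $H$-criterion of Theorem \ref{t2} differs from the $G$-criterion only by removing, block by block in $\beta$, a set of terms that is either empty or the whole block — so no equation is ever destroyed.
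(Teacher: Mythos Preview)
Your argument is correct and follows essentially the same route as the paper: apply the criterion of Theorem \ref{t2} to $H$, observe that each exponent $N=\lambda b\,\theta_{i_1}^{-1}\cdots\theta_{i_k}^{-1}b_0\cdots b_m$ lies in $\mathbb N'$, and split on whether $\beta$ is an $\mathbb N'$-th root of unity to see that the $S$-restricted inner sums either coincide with the full sums (which vanish by hypothesis) or are empty. The paper presents the same dichotomy in the same order, only sketching the step $N\in\mathbb N'$ that you wrote out in detail.
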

\begin{proof}
By Theorem \ref{t2}, it suffices to prove
\begin{equation}\label{XXXIII}
\begin{aligned}\sum _{k=0}^{+\infty} (-1)^k&\sum _{b^{-1}\theta _{i_1}\cdots \theta
_{i_k}\mid \lambda} \frac {\nu _{i_1}\cdots \nu_{i_k}}{e}\frac
{\theta _{i_1}\cdots \theta_{i_k}} {b} \\&\times\sum _{\substack
{\alpha _i\in S\\ \alpha _i^{\lambda b \theta _{i_1}^{-1}\cdots
\theta _{i_k}^{-1}b_0\cdots b_m}=\beta\\ }}  n_i\alpha _i^{\lambda
ub \theta _{i_1}^{-1}\cdots  \theta _{i_k}^{-1}}=0,
\end{aligned}\end{equation}
for all $\lambda \in \mathbb Q_b',\ \lambda \not \in \mathbb N';$
$u\in \mathbb N,\ (u,b_0\cdots b_m)=1;$ and $\beta \in \mathbb
C^{*}.$

Since all the elements of $S$ are $n$-th roots of unity, for some
$n\in \mathbb N'$, Equation (\ref{XXXIII}) trivially holds when
$\beta ^n\ne 1$, for all $n\in \mathbb N'$.

Otherwise, assume $\beta ^n=1$ for some $n\in \mathbb N'$. In
Equation (\ref{XXXIII}), the conditions
$$\ b^{-1}\theta _{i_1}\cdots \theta
_{i_k}\mid \lambda\ {\rm and}\ \lambda \in \mathbb Q_b'\
\Rightarrow\ \lambda b \theta _{i_1}^{-1}\cdots  \theta
_{i_k}^{-1}b_0\cdots b_m\in \mathbb N'.$$ So if
$$\alpha _i^{\lambda b \theta _{i_1}^{-1}\cdots
\theta _{i_k}^{-1}b_0\cdots b_m}=\beta,
$$
then $\alpha_i$ is an $n$-th root of unity for some $n\in \mathbb
N'$, hence $\alpha_i\in S$.

Sp we can drop the subscription $\alpha _i\in S$ in the summation of (\ref{XXXIII}):
\begin{equation}\label{XXXIIInew}
\begin{aligned}\sum _{k=0}^{+\infty} (-1)^k&\sum _{b^{-1}\theta _{i_1}\cdots \theta
_{i_k}\mid \lambda} \frac {\nu _{i_1}\cdots \nu_{i_k}}{e}\frac
{\theta _{i_1}\cdots \theta_{i_k}} {b} \\&\times\sum _{\substack
{\alpha _i^{\lambda b \theta _{i_1}^{-1}\cdots \theta
_{i_k}^{-1}b_0\cdots b_m}=\beta\\ }}  n_i\alpha _i^{\lambda ub
\theta _{i_1}^{-1}\cdots  \theta _{i_k}^{-1}}=0,
\end{aligned}\end{equation}
 Then (\ref{XXXIIInew}) holds by Theorem
\ref{t2} and the assumption $f\in\mathbb C[[x]].$
\end{proof}
Now assume \begin{equation}\label{XXXIVnew}H(x)=\prod \limits _{\alpha _i\in S}(1-\alpha
_ix)^{n_i}=\prod \limits_{d\in \mathbb N'}(1-x^d)^{m_d},\end{equation} where
$m_d=0$ for $d$ sufficiently large . Instead of Theorem \ref{t2}, we
have the following simple criterion.

For convenience, we always denote
\begin{equation}\label{XXXIV}
m_d=0 \ {\rm for\ } d\not \in \mathbb N'.
\end{equation}
\begin{theorem}\label{t3}
Let $$H(x)=\prod \limits _{d\in \mathbb N'}(1-x^d)^{m_d},\ {\rm
where}\ m_d=0\ {\rm for}\ d\gg 0.$$ Then Equation
(\ref{XIII}) has a solution $g(x)\in \mathbb C[[x]]$ with $g(0)=1$ if and only if for any $\lambda \in \mathbb Q_b'-\mathbb N'$,
the following equation holds
\begin{equation}\label{XXXV}
\sum _{k=0}^{+\infty}(-1)^k\sum _{1\le i_1,\cdots ,i_k\le m}\nu _{i_1}\cdots \nu _{i_k}m_{\lambda \theta _{i_1}^{-1}\cdots \theta _{i_k}^{-1}}=0.
\end{equation}
\end{theorem}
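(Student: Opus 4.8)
The plan is to apply Theorem \ref{t1} with $G=H$ and then decide when the resulting solution is an honest power series by inspecting its logarithmic derivative, as in the proof of Theorem \ref{t2}; the special shape $H=\prod_{d\in\mathbb N'}(1-x^d)^{m_d}$ should collapse the general criterion (\ref{XXIV}) down to (\ref{XXXV}). First I would invoke Theorem \ref{t1}: Equation (\ref{XIII}) has a unique solution $g\in\mathbb C[[x^{\Lambda}]]$ with $g(0)=1$, so the real question is whether this $g$ lies in $\mathbb C[[x]]$, which by Proposition \ref{p8} is equivalent to $xg'/g\in\mathbb C[[x]]$.

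Next I would specialize. Since $H=\prod_{d\in\mathbb N'}(1-x^d)^{m_d}$, formula (\ref{XXV}) gives $\frac{xH'}{H}=-\sum_{n\ge1}\widetilde m(n)\,x^n$ with $\widetilde m(n):=\sum_{d\mid n}d\,m_d$; i.e.\ the power sum $\sum_i n_i\alpha_i^{n}$ occurring in (\ref{XXVIII}) is just the divisor sum $\widetilde m(n)$. Substituting this into (\ref{XXVIII}) and reindexing the divisor sum — writing $\mu b\theta_{i_1}^{-1}\cdots\theta_{i_k}^{-1}=dj$ and summing over $j\ge1$ — I expect the coefficient of $x^{\mu}$ in $xg'/g$ to come out as
\[
-\frac{\mu}{e}\sum_{j\ge1}\frac1j\,U\!\left(\frac{\mu b}{j}\right),\qquad U(\lambda):=\sum_{k=0}^{+\infty}(-1)^k\sum_{1\le i_1,\dots,i_k\le m}\nu_{i_1}\cdots\nu_{i_k}\,m_{\lambda\theta_{i_1}^{-1}\cdots\theta_{i_k}^{-1}},
\]
where $U(\lambda)$ (using convention (\ref{XXXIV})) is exactly the left-hand side of (\ref{XXXV}). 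By Proposition \ref{p8}, then, Equation (\ref{XIII}) is solvable in $\mathbb C[[x]]$ iff $\sum_{j\ge1}\frac1j U(\mu b/j)=0$ for every $\mu\in\mathbb Q-\mathbb N$. The final step is to identify this with (\ref{XXXV}): the operator $w\mapsto\bigl(\mu\mapsto\sum_{j\ge1}\frac1j w(\mu b/j)\bigr)$ is invertible on suitably supported sequences, its inverse being a Möbius-type sum $v\mapsto\bigl(\lambda\mapsto\sum_{j\ge1}\frac{\mu_0(j)}{j}v(\lambda/(bj))\bigr)$. Feeding in the two elementary facts that $U$ is automatically supported on $\mathbb Q_b'$ (since $m_d\ne0$ forces $d\in\mathbb N'$) and that $\mathbb Q_b'\cap\mathbb N=\mathbb N'$ (so for $\lambda\in\mathbb Q_b'-\mathbb N'$ none of $\lambda/(bj)$ is an integer), the inversion should turn ``$\sum_j\frac1j U(\mu b/j)=0$ for all $\mu\notin\mathbb N$'' into ``$U(\lambda)=0$ for all $\lambda\in\mathbb Q_b'-\mathbb N'$'', i.e.\ into (\ref{XXXV}).

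I anticipate the implication ``(\ref{XIII}) solvable in $\mathbb C[[x]]$ $\Rightarrow$ (\ref{XXXV})'' to drop straight out of this inversion, and the converse to be the hard part. Assuming (\ref{XXXV}), i.e.\ $U$ supported on $\mathbb N'$, one must still show $\sum_j\frac1j U(\mu b/j)$ vanishes for $\mu\notin\mathbb N$, equivalently that the divisor sum $T(N):=\sum_{d\mid N}d\,U(d)$ vanishes whenever $b\nmid N$; since $T(N)$ can a priori involve the values of $U$ on integers in $\mathbb N'$, about which (\ref{XXXV}) is silent, one genuinely needs the hypothesis $H=\prod_{d\in\mathbb N'}(1-x^d)^{m_d}$. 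Concretely, that hypothesis says $m=(I+\Phi)U$ is supported on $\mathbb N'$, where $(\Phi U)(\lambda)=\sum_{i=1}^m\nu_i\,U(\lambda\theta_i^{-1})$, and this should force additional vanishing of $U$ (morally, $U$ is concentrated on the part of $\mathbb N'$ that is ``divisible enough by $b$''); from there $T(N)=0$ for $b\nmid N$ ought to follow by induction on $N$ using a recursion of the form $T(N)+\sum_{i=1}^m\nu_i\theta_i\,T(N\theta_i^{-1})=\widetilde m(N)$ together with the support constraint on $m$. Carrying out this last argument cleanly — juggling $\mathbb Q_b'$, $\mathbb N'$, divisibility by $b$, and the support constraint on $m$ — is where I expect the real work to lie; everything before it is bookkeeping plus the single combinatorial identity of Step 2.
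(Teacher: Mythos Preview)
Your plan is essentially the paper's own proof: invoke Theorem \ref{t1}, reduce via Proposition \ref{p8} to the logarithmic derivative, replace the power sums $\sum_i n_i\alpha_i^n$ by the divisor sums $\sum_{d\mid n}d\,m_d$ (your $\widetilde m(n)$), reindex, and M\"obius-invert. Your identity for the coefficient of $x^\mu$ as $-\tfrac{\mu}{e}\sum_{j\ge1}\tfrac1jU(\mu b/j)$ is correct and is exactly what the paper obtains (equations (\ref{e28})--(\ref{e30}) after the substitution $d=\mu b\theta_{i_1}^{-1}\cdots\theta_{i_k}^{-1}/j$).

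Where you diverge is in the handling of the two directions. You expect the converse to be the hard part and sketch an inductive argument involving $T(N)=\sum_{d\mid N}dU(d)$ and a recursion for $T$. This is unnecessary. The paper avoids the asymmetry you perceive by \emph{first} invoking the decomposition $\mu=\lambda u$ with $\lambda\in\mathbb Q_b'$ and $(u,b_0\cdots b_m)=1$, which was already carried out in the proof of Theorem \ref{t2} (see (\ref{XXIX})). Once the criterion is stated for $\lambda\in\mathbb Q_b'-\mathbb N'$ (the variable $u$ drops out immediately because $m_d\ne0$ forces $d\in\mathbb N'$), the remaining divisor sum is over $d\mid\lambda$ with $d\in\mathbb Q_b'-\mathbb N'$, and the modified M\"obius inversion (Lemma \ref{l4}) is a genuine if-and-only-if: both directions fall out at once. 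So your ``hard converse'' disappears as soon as you use the $\mu=\lambda u$ reduction rather than working with a general non-integer $\mu$; the induction you propose would work but is doing by hand what that reduction plus Lemma \ref{l4} give for free.
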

\begin{proof}
From the proof of Theorem \ref{t2} (see Equation (\ref{XXIX})), $g(x)\in \mathbb C [[x]]$ if and only if
the following equation
\begin{equation}\label{e25}
\sum _{k=0}^{+\infty} (-1)^k\sum _{b^{-1}\theta _{i_1}\cdots \theta
_{i_k}\mid \lambda} \frac {\nu _{i_1}\cdots \nu_{i_k}}{e}\frac
{\theta _{i_1}\cdots \theta_{i_k}} {b} \sum _{\alpha _i\in S }  n_i\alpha
_i^{\lambda ub \theta _{i_1}^{-1}\cdots  \theta _{i_k}^{-1}}=0
\end{equation} holds for any $\lambda \in \mathbb Q_b'-\mathbb N'$ and $u\in \mathbb N,\ {\rm s.t.}\ (u,\ b_0\cdots b_m)=1$.

Computing $\frac {\displaystyle xH'(x)}{\displaystyle H(x)}$ by two expressions of $H(x)$ in (\ref{XXXIVnew}), we get
\begin{equation*}
\begin{aligned}
\sum _{\alpha _i \in S}\frac {-n_i\alpha _ix}{1-\alpha _ix}&=\sum _{d\in \mathbb N'}\frac {-m_ddx^d}{1-x^d}\\
-\sum _{m=1}^{+\infty}(\sum _{\alpha _i \in  S}n_i\alpha _i ^m)x^m&=-\sum _{m=1}^{+\infty}(\sum _{\substack {d\mid m\\ d\in \mathbb N'}}dm_d)x^m.\\
\end{aligned}
\end{equation*}

So \begin{equation}\label{e26}
\sum _{\alpha _i\in S}n_i\alpha _i^m=\sum _{d\mid m, d\in \mathbb N'}dm_d=\sum _{d\mid m}dm_d
\end{equation} for any $m\in \mathbb N$. The last equality of (\ref{e26}) holds because of Equation (\ref{XXXIV}).

Substituting (\ref{e26}) into (\ref{e25}), we get
\begin{equation}\label{e27}
\begin{aligned}&\sum _{k=0}^{+\infty} (-1)^k\sum _{b^{-1}\theta _{i_1}\cdots \theta
_{i_k}\mid \lambda} \frac {\nu _{i_1}\cdots \nu_{i_k}}{e}\frac
{\theta _{i_1}\cdots \theta_{i_k}} {b} \sum _{d\mid \lambda ub\theta _{i_1}^{-1}\cdots \theta _{i_k}^{-1}}  dm_d=0.\end{aligned}
\end{equation}
We can omit $u$ from Equation (\ref{e27}):
\begin{equation}\label{e27new}
\begin{aligned}\sum _{k=0}^{+\infty} (-1)^k\sum _{b^{-1}\theta _{i_1}\cdots \theta
_{i_k}\mid \lambda} \frac {\nu _{i_1}\cdots \nu_{i_k}}{e}\frac
{\theta _{i_1}\cdots \theta_{i_k}} {b} \sum _{d\mid \lambda b\theta _{i_1}^{-1}\cdots \theta _{i_k}^{-1}}  dm_d=0
\end{aligned}
\end{equation}
since $d\in \mathbb N'$ (otherwise, $m_d=0$), $\lambda \in \mathbb Q_b'$ and
$(u,b_0\cdots b_m)=1$.

Equation (\ref{e27new}) is equivalent to
\begin{equation}\label{e28}
\sum _{k=0}^{+\infty} (-1)^k\sum _{1\le i_1,\cdots ,i_k\le m} \frac {\nu _{i_1}\cdots \nu_{i_k}}{e}\frac
{\theta _{i_1}\cdots \theta_{i_k}} {b} \sum _{d\mid \lambda b\theta _{i_1}^{-1}\cdots \theta _{i_k}^{-1}}  dm_d=0,
\end{equation} since $$d\mid \lambda b\theta
_{i_1}^{-1}\cdots \theta _{i_k}^{-1}\ {\rm and}\ d\in \mathbb N' \Rightarrow
b^{-1}\theta _{i_1}\cdots \theta
_{i_k}\mid \lambda.$$

From (\ref{XXXIV}), replacing $d$ by $db\theta _{i_1}^{-1}\cdots \theta _{i_k}^{-1}$ in Equation (\ref{e28}), we get
\begin{equation}\label{e29}
\sum _{k=0}^{+\infty} (-1)^k\sum _{1\le i_1,\cdots ,i_k\le m} \frac {\nu _{i_1}\cdots \nu_{i_k}}{e}
\sum _{d\mid \lambda}  dm_{db\theta _{i_1}^{-1}\cdots \theta _{i_k}^{-1}}=0.
\end{equation}
for any $\lambda \in \mathbb Q_b'-\mathbb N'$.

In Equation (\ref{e29}), since $$m_{db\theta _{i_1}^{-1}\cdots \theta _{i_k}^{-1}}\not =0\
\Rightarrow\
db\theta _{i_1}^{-1}\cdots \theta _{i_k}^{-1}\in \mathbb N',$$ we get $d\in \mathbb Q_b'$.
Also, the conditions: $$d\mid \lambda\ {\rm and}\ \lambda \in \mathbb Q_b'-\mathbb N'\ \Rightarrow\ d\not \in \mathbb N'.$$ Therefore, $d\in \mathbb Q_b'-\mathbb N'$, in Equation (\ref{e29}).

Changing the order of summation of (\ref{e29}), we have
\begin{equation}\label{e30}
\sum_{\substack{d\mid \lambda,\\d\in \mathbb Q_b'-\mathbb N'}}\sum _{k=0}^{+\infty} (-1)^k\sum _{1\le i_1,\cdots ,i_k\le m}\nu _{i_1}\cdots \nu_{i_k}
m_{db\theta _{i_1}^{-1}\cdots \theta _{i_k}^{-1}}=0
\end{equation}for any $\lambda \in \mathbb Q_b'-\mathbb N'$.

By the following modified version of
M\"{o}bius inversion formula (Lemma \ref{l4}), Equation (\ref{e30})
is equivalent to
\begin{equation}\label{e31}
\sum _{k=0}^{+\infty}(-1)^k\sum _{1\le i_1,\cdots ,i_k\le m}\nu _{i_1}\cdots \nu _{i_k}m_{db \theta _{i_1}^{-1}\cdots \theta _{i_k}^{-1}}=0
\end{equation}for any $d\in \mathbb Q_b'-\mathbb N'$.

Changing variable $d$ by $\lambda$ in Equation (\ref{e31}),
we get the formula (\ref{XXXV}).
\end{proof}
\begin{lemma}\label{l4} (Modified M\"{o}bius Inversion formula) Let $\{A_n\}\in \mathbb C$ be
a sequence indexed by $n\in \mathbb Q_b'-\mathbb N'$. For any $m\in \mathbb Q_b'-\mathbb N'$, define
\begin{equation}\label{e32}
B_m=\sum _{\substack {n\mid m\\ n\in \mathbb Q_b'-\mathbb N'}}A_n.
\end{equation}
We always assume (\ref{e32}) is a finite sum, i.e., there are only finitely many nonzero terms in the summation. Then
\begin{equation}\label{e33}
A_n=\sum _{\substack {m\mid n\\ m\in \mathbb Q_b'-\mathbb N'}}\mu(\frac {n}{m})B_m,
\end{equation}
where $\mu $ is the M\"{o}bius function.
\end{lemma}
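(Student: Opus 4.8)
The plan is to follow the classical proof of the M\"obius inversion formula, the only new ingredient being a structural fact about the divisibility relation on $\mathbb Q_b'-\mathbb N'$. Fix $n\in\mathbb Q_b'-\mathbb N'$. Substituting $B_m=\sum_{d\mid m,\,d\in\mathbb Q_b'-\mathbb N'}A_d$ into the right-hand side of (\ref{e33}) and interchanging the order of summation (using transitivity of divisibility: $d\mid m$ and $m\mid n$ imply $d\mid n$) gives
$$\sum_{\substack{m\mid n\\ m\in\mathbb Q_b'-\mathbb N'}}\mu(n/m)B_m=\sum_{\substack{d\mid n\\ d\in\mathbb Q_b'-\mathbb N'}}A_d\sum_{\substack{d\mid m\mid n\\ m\in\mathbb Q_b'-\mathbb N'}}\mu(n/m),$$
so the statement reduces to showing the inner sum equals $1$ when $d=n$ and $0$ otherwise, after which the whole expression collapses to $A_n$.

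The crux is the claim that whenever $d,n\in\mathbb Q_b'-\mathbb N'$ with $d\mid n$, \emph{every} positive rational $m$ with $d\mid m\mid n$ automatically lies in $\mathbb Q_b'-\mathbb N'$; granting this, the condition ``$m\in\mathbb Q_b'-\mathbb N'$'' in the inner sum is vacuous. To prove it, write $d=d_1/b^{s}$ and $n=n_1/b^{r}$ with $d_1,n_1\in\mathbb N'$ and $r,s\ge 0$, and put $N=n/d\in\mathbb N$, so that $N\,d_1 b^{r}=n_1 b^{s}$. If a prime $q$ divides $N$ then $q\mid n_1 b^{s}$; since $b=b_0$ and $n_1\in\mathbb N'$, a prime not dividing $b_0\cdots b_m$ divides neither $n_1$ nor $b$, so necessarily $q\mid b_0\cdots b_m$, i.e.\ $N\in\mathbb N'$. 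Writing $m=dk$ with $k\mid N$, closedness of $\mathbb N'$ under divisors yields $k\in\mathbb N'$, hence $m=d_1k/b^{s}\in\mathbb Q_b'$; moreover $n=m\cdot(N/k)$ with $N/k\in\mathbb N'$, so $m\in\mathbb N'$ would force $n\in\mathbb N'$, a contradiction, whence $m\notin\mathbb N'$. With the restriction removed, substituting $m=dk$ turns the inner sum into the ordinary divisor sum $\sum_{k\mid N}\mu(N/k)=\sum_{e\mid N}\mu(e)$, which is $1$ for $N=1$ (that is, $d=n$) and $0$ otherwise, as required.

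Finally I would verify that every rearrangement is legitimate by checking the sums are effectively finite. The same valuation argument shows $m\mid n$ with $m\in\mathbb Q_b'$ forces $n/m\in\mathbb N'$, so $\mu(n/m)\ne 0$ only when $n/m$ is a squarefree element of $\mathbb N'$, of which there are only finitely many; together with the standing assumption that each $B_m$ is a finite sum, this makes the double sum above a finite sum, so the interchange of summation is valid, and it also shows the right-hand side of (\ref{e33}) is well posed. I expect the only genuine obstacle to be the prime-by-prime valuation bookkeeping behind the structural claim about $\mathbb Q_b'-\mathbb N'$; the rest is the textbook M\"obius computation.
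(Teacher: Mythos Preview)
Your proof is correct and follows essentially the same route as the paper's: substitute the definition of $B_m$, interchange the order of summation, and show that for $d,n\in\mathbb Q_b'-\mathbb N'$ with $d\mid n$ the intermediate constraint $m\in\mathbb Q_b'-\mathbb N'$ is automatic, so the inner sum reduces to the classical $\sum_{e\mid N}\mu(e)$. The paper records this structural fact tersely as the chain of implications $n,l\in\mathbb Q_b'-\mathbb N'\Rightarrow n/l\in\mathbb N'\Rightarrow m/l\in\mathbb N'\Rightarrow m\in\mathbb Q_b'-\mathbb N'$; your prime-valuation argument spells out exactly the same reasoning, and your additional finiteness check (that only finitely many squarefree $n/m\in\mathbb N'$ contribute) is a welcome detail the paper omits.
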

\begin{proof}
\begin{equation*}
\begin{aligned}
\sum _{\substack {m|n\\ m\in \mathbb Q_b'-\mathbb N'}}\mu (\frac {n}{m})B_m&
=\sum _{\substack {m|n\\ m\in \mathbb Q_b'-\mathbb N'}}\mu (\frac {n}{m})\sum_{\substack {l|m,\\l\in \mathbb Q_b'-\mathbb N'}}A_l\\
&=\sum _{\substack {l|n\\ l\in \mathbb Q_b'-\mathbb N'}}A_l\sum _{\substack {l|m|n\\ m\in \mathbb Q_b'-\mathbb N'}}\mu (\frac {n}{m})\\
&=\sum _{\substack {l|n\\ l\in \mathbb Q_b'-\mathbb N'}}A_l\sum _{\substack {\frac {n}{m}\mid \frac {n}{l}\\ \frac {n}{m}\in \mathbb N}}\mu (\frac {n}{m})\\
&=A_n.
\end{aligned}
\end{equation*}
The second equality from the bottom is because: $$n,l\in \mathbb Q_b'-\mathbb N'\Rightarrow\frac
{\displaystyle n}{\displaystyle l} \in \mathbb N'\Rightarrow \frac
{\displaystyle m}{\displaystyle l}\in \mathbb N'\Rightarrow m\in
\mathbb Q_b'-\mathbb N'.$$
\end{proof}
\begin{theorem}\label{t4}
Let $$H(x)=\prod \limits _{d\in \mathbb N'}(1-x^d)^{m_d},\ {\rm
where}\ m_d=0\ {\rm for}\ d\gg 0.$$  Assume Equation (\ref{XIII}) has a solution
$g(x)\in \mathbb C[[x]]$ with $g(0)=1$. Then
\begin{equation}\label{e34}
g(x)=\prod _{d\in \mathbb N'}(1-x^d)^{g_d},
\end{equation}
where
\begin{equation}\label{e35}
g_d=\sum _{k=0}^{+\infty}(-1)^k\sum _{1\le i_1,\cdots ,i_k\le m}
\frac {1}{e}\nu _{i_1}\cdots \nu_{i_k}m_{bd\theta _{i_1}^{-1}\cdots
\theta _{i_k}^{-1}}.
\end{equation}
\end{theorem}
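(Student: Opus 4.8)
The plan is to pin $g$ down as the explicit infinite product of Theorem~\ref{t1}, expand that product using the given factorization of $H$, regroup by the value of the exponent, and then use the hypothesis $g\in\mathbb C[[x]]$ to discard all fractional — and all non-$\mathbb N'$ — exponents. To begin with, $H(x)=\prod_{\alpha_i\in S}(1-\alpha_i x)^{n_i}$ is again a finite product of distinct linear factors $1-\alpha x$ with $\alpha\ne0$, so Theorem~\ref{t1} applies verbatim with $G$ replaced by $H$: Equation~(\ref{XIII}) has a unique solution in $\mathbb C[[x^{\Lambda}]]$ with value $1$ at $0$, namely the right-hand side of~(\ref{XVII}) with $G=H$. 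Since by hypothesis $g\in\mathbb C[[x]]\subseteq\mathbb C[[x^{\Lambda}]]$ is such a solution, $g$ is that product.

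Next, substitute $H(x)=\prod_{d\in\mathbb N'}(1-x^d)^{m_d}$ — a finite product, as $m_d=0$ for $d\gg0$ — into~(\ref{XVII}). Using that each $x\mapsto x^{b^{-1}\theta_{i_1}\cdots\theta_{i_k}}$ is a continuous ring homomorphism and the power laws for $(1+h)^{\alpha}$, one rewrites $g$ as an infinite product of factors $\bigl(1-x^{b^{-1}\theta_{i_1}\cdots\theta_{i_k}d}\bigr)^{(-1)^k e^{-1}\nu_{i_1}\cdots\nu_{i_k} m_d}$ over all $k\ge0$, all $(i_1,\dots,i_k)$, and all $d\in\mathbb N'$ with $m_d\ne0$. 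Because $\theta_{i_a}=b_{i_a}/b_0\ge b_1/b_0>1$, the set of exponents $b^{-1}\theta_{i_1}\cdots\theta_{i_k}d$ is discrete: only finitely many are $\le M$ for any $M$, since $d$ is bounded while the product of the $\theta$'s grows at least like $(b_1/b_0)^k$. Hence the product converges in $\mathbb C[[x^{\Lambda}]]$ and — this is the routine bookkeeping — may be regrouped by the value $\rho$ of the exponent, equal exponents being combined via $(1-x^{\rho})^{a}(1-x^{\rho})^{b}=(1-x^{\rho})^{a+b}$. For a fixed positive rational $\rho$ the total exponent of $1-x^{\rho}$ is then
$$g_{\rho}:=\sum_{k=0}^{+\infty}(-1)^k\sum_{1\le i_1,\dots,i_k\le m}\frac1e\,\nu_{i_1}\cdots\nu_{i_k}\,m_{b\rho\theta_{i_1}^{-1}\cdots\theta_{i_k}^{-1}}$$
(with the convention $m_e=0$ for $e\notin\mathbb N'$; the sum is finite because $b\rho\theta_{i_1}^{-1}\cdots\theta_{i_k}^{-1}\le b\rho(b_0/b_1)^k\to0$), which for $\rho=d\in\mathbb N'$ is precisely~(\ref{e35}). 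Thus $g(x)=\prod_{\rho}(1-x^{\rho})^{g_{\rho}}$, a locally finite product over positive rationals $\rho$.

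It remains to show $g_{\rho}=0$ unless $\rho\in\mathbb N'$. Suppose $g_{\rho}\ne0$ for some $\rho\notin\mathbb N$ and let $\rho_0$ be the least such (it exists since $\{\rho:g_\rho\ne0\}$ is discrete). Expanding $\prod_{\rho}(1-x^{\rho})^{g_{\rho}}$ factor by factor, the coefficient of $x^{\rho_0}$ is a sum over families of nonnegative integers $n_\rho$, almost all zero, supported on exponents $\rho$ with $g_\rho\ne0$ and satisfying $\sum_\rho\rho\,n_\rho=\rho_0$; since $\rho_0\notin\mathbb N$, any such family uses at least one non-integral $\rho$, which then satisfies $\rho\le\rho_0$, hence $\rho=\rho_0$ by minimality, forcing $n_{\rho_0}=1$ and all other $n_\rho=0$. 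So the coefficient of $x^{\rho_0}$ in $g$ equals $-g_{\rho_0}\ne0$, contradicting $g\in\mathbb C[[x]]$. Hence $g_{\rho}=0$ for every $\rho\notin\mathbb N$, so $g(x)=\prod_{d\ge1}(1-x^d)^{g_d}$. Finally, if $d\in\mathbb N\setminus\mathbb N'$, choose a prime $q\mid d$ with $q\nmid b_0\cdots b_m$; for all $k$ and $(i_1,\dots,i_k)$, the rational $bd\theta_{i_1}^{-1}\cdots\theta_{i_k}^{-1}=db^{k+1}/(b_{i_1}\cdots b_{i_k})$, whenever it is a positive integer, has $q$-adic valuation $v_q(d)\ge1$ (as $q\nmid b_i$ for every $i$), so it lies outside $\mathbb N'$ and the corresponding $m$ vanishes; by~(\ref{e35}) this forces $g_d=0$. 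Therefore $g(x)=\prod_{d\in\mathbb N'}(1-x^d)^{g_d}$ with $g_d$ as in~(\ref{e35}), as desired.

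The main obstacle is exactly the third paragraph: converting the mere hypothesis that a genuine power-series solution exists into the vanishing of every fractional exponent of the infinite product. This rests on the discreteness of the exponent set and on a careful lowest-term analysis of $\prod_{\rho}(1-x^{\rho})^{g_{\rho}}$; by contrast the convergence and regrouping of the product in $\mathbb C[[x^{\Lambda}]]$, and the verification of the power-law identities used in the expansion, are routine given the machinery of Section~3.
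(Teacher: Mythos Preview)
Your proof is correct, but it proceeds by a genuinely different route from the paper's. The paper argues by ansatz: assuming $g=\prod_{d\in\mathbb N'}(1-x^d)^{g_d}$, it substitutes into~(\ref{XIII}), uses Proposition~\ref{p9} to extract the recursion $e_0g_{d/b_0}+\cdots+e_mg_{d/b_m}=m_d$, iterates this to obtain the closed form~(\ref{e35}) for all $d\in\mathbb Q'$, and then invokes Theorem~\ref{t3} (the logarithmic-derivative criterion, plus a modified M\"obius inversion) to show that the formula vanishes when $d\in\mathbb Q_b'\setminus\mathbb N'$; uniqueness from Theorem~\ref{t1} closes the loop. You instead start from the explicit product~(\ref{XVII}), substitute $H=\prod_d(1-x^d)^{m_d}$ directly, regroup by exponent, and kill the non-integral exponents with a self-contained lowest-term argument in $\mathbb C[[x^{\Lambda}]]$.

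What each approach buys: your argument is more elementary in that it completely bypasses Theorem~\ref{t3} and its machinery (logarithmic derivatives, the M\"obius-type inversion of Lemma~\ref{l4}); the minimal-exponent step is a clean extension of the idea behind Proposition~\ref{p9} to fractional exponents. The paper's approach, on the other hand, makes the recursive structure of the $g_d$ explicit and separates the two obstructions ($d\in\mathbb Q_b'\setminus\mathbb N'$ versus $d\notin\mathbb Q_b'$) in a way that mirrors the later use in Theorem~\ref{t5}. Both rely on the uniqueness in Theorem~\ref{t1}, and your Step~4 (vanishing for $d\in\mathbb N\setminus\mathbb N'$) is essentially identical to the paper's Case~2.
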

\begin{proof}
First assume that Equation (\ref{e34}) holds. Then
\begin{equation}\label{e36}
\begin{aligned}
&g(x^{b_0})^{e_0}g(x^{b_1})^{e_1}\cdots g(x^{b_m})^{e_m}\\
=&\prod _{d\in \mathbb N'}(1-x^{b_0d})^{e_0g_d}\prod _{d\in \mathbb N'}(1-x^{b_1d})^{e_1g_d}\cdots\prod _{d\in \mathbb N'}(1-x^{b_md})^{e_mg_d}\\
=&\prod _{d\in \mathbb N'}(1-x^d)^{e_0g_{d/b_0}+e_1g_{d/b_1}+\cdots +e_mg_{d/b_m}},\\
\end{aligned}
\end{equation}
where we make the convention:
\begin{equation}\label{e37}
g_d=0\ {\rm if}\ d\not \in \mathbb N'.
\end{equation}

From (\ref{e36}), Equation (\ref{XIII}) is equivalent to
\begin{equation}\label{e38}
\prod _{d\in \mathbb N'}(1-x^d)^{e_0g_d/b_0+e_1g_d/b_1+\cdots +e_mg_d/b_m}=\prod _{d\in \mathbb N'}(1-x^d)^{g_d}.
\end{equation}

By Proposition \ref{p9}, Equation (\ref{e38}) is equivalent to
\begin{equation}\label{e39}
e_0g_{d/b_0}+e_1g_{d/b_1}+\cdots +e_mg_{d/b_m}=m_d.
\end{equation}
Since for $d\in \mathbb Q'-\mathbb N',\ g_d=m_d=0$, Equation (\ref{e39}) holds for all
$d\in \mathbb Q'.$ Changing variable $d$ by $bd$ and multiplying $\frac {\displaystyle 1}{\displaystyle e}$ in
(\ref{e39}), we get
\begin{equation}\label{e40}
g_d+\nu _1g_{d\theta _1^{-1}}+\cdots +\nu _mg_{d\theta _m^{-1}}=\frac {1}{e}m_{bd},
\end{equation}
where $d\in \mathbb Q'$.

Now we will solve Equations (\ref{e37}) and (\ref{e40}) simultaneously.

Iterating (\ref{e40}), we get
\begin{equation}\label{e41}
g_d=\sum _{k=0}^{+\infty}(-1)^k\sum _{1\le i_1,\cdots ,i_k\le
m}\frac {1}{e}\nu _{i_1}\cdots \nu _{i_k}m_{bd\theta
_{i_1}^{-1}\cdots \theta _{i_k}^{-1}}
\end{equation}
for all $d\in \mathbb Q'$. Note that (\ref{e41}) is actually a finite sum.

Substituting (\ref{e41}) into (\ref{e40}), then
\begin{equation*}
\begin{aligned}
&\sum _{k=0}^{+\infty}(-1)^k\sum _{1\le i_1,\cdots ,i_k\le m}\frac {1}{e}\nu _{i_1}\cdots \nu _{i_k}m_{bd\theta _{i_1}^{-1}\cdots \theta _{i_k}^{-1}}\\
+&\sum _{k=0}^{+\infty}(-1)^k\sum _{1\le i_1,\cdots ,i_{k+1}\le m}\frac {1}{e}\nu _{i_1}\cdots \nu _{i_k}\nu _{i_{k+1}}m_{bd\theta _{i_1}^{-1}\cdots \theta _{i_{k+1}}^{-1}}\\
=&\frac {1}{e}m_{bd} .
\end{aligned}
\end{equation*}
So (\ref {e41}) is really a solution of (\ref{e40}).

Now we check the solutions (\ref {e41}) also satisfy (\ref{e37}).

Assume $d\not \in \mathbb N'$. We divide it into two cases.

\textbf{case 1:} $d\in \mathbb Q_b'-\mathbb N'$. From Theorem \ref{t3}, $g_d=0$.

\textbf{case 2:} $d\not\in \mathbb Q_b'$. Then $$m_{bd\theta _{i_1}^{-1}\cdots \theta _{i_k}^{-1}}=0\ {\rm since}\
bd\theta _{i_1}^{-1}\cdots \theta _{i_k}^{-1}=\frac {\displaystyle b^{k+1}d}{\displaystyle b_{i_1}\cdots b_{i_k}}\not \in \mathbb N.$$
By Equation (\ref{e41}), $g_d=0$, too.

Hence, $g_d=0\ {\rm if}\ d\not \in \mathbb N'$, which concludes the proof.
\end{proof}
Let $p$ be a prime. For $a\in \mathbb Z, a\ne 0$, let ${\rm
ord}_p(a)$ be the highest exponent $v$ such that $p^v$ divides $a$.
For $b\in \mathbb Z, b\ne0$, define ${\rm ord}_p(a/b) ={\rm
ord}_p(a)-{\rm ord}_p(b).$
\begin{theorem}\label{t5}
Let $$H(x)=\prod \limits _{d\in \mathbb N'}(1-x^d)^{m_d},\ {\rm
with}\ m_d=0\ {\rm for}\ d\gg 0.$$ Assume there exists a prime $p$
with ${\rm ord}_p(b_0)>{\rm ord}_p(b_i),\ {\rm for}\ 1\le i\le m.$
If Equation (\ref{XIII}) has a solution $g(x)\in \mathbb C[[x]]$
with $g(0)=1$, then
\begin{equation*}
g(x)=\prod _{d\in \mathbb N'}(1-x^d)^{g_d}
\end{equation*}
and $g_d=0$ for $d\gg 0$.
\end{theorem}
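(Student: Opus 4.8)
The plan is to deduce the statement directly from the explicit description of $g(x)$ furnished by Theorem \ref{t4}. Under the hypotheses, that theorem tells us that any power series solution of (\ref{XIII}) with $g(0)=1$ must be $g(x)=\prod_{d\in\mathbb N'}(1-x^d)^{g_d}$ with
\[
g_d=\sum_{k=0}^{+\infty}(-1)^k\sum_{1\le i_1,\cdots,i_k\le m}\frac{1}{e}\,\nu_{i_1}\cdots\nu_{i_k}\,m_{bd\theta_{i_1}^{-1}\cdots\theta_{i_k}^{-1}},
\]
a finite sum for each $d$. Since $\theta_i=b_i/b$, the subscript appearing here equals $bd\theta_{i_1}^{-1}\cdots\theta_{i_k}^{-1}=b^{k+1}d/(b_{i_1}\cdots b_{i_k})$. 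If $m_n=0$ for every $n$ then $H=1$, $g=1$, and there is nothing to prove, so set $D=\max\{n\in\mathbb N:m_n\ne0\}\ge1$; then the term indexed by $(k;i_1,\cdots,i_k)$ in $g_d$ can be nonzero only when $N:=b^{k+1}d/(b_{i_1}\cdots b_{i_k})$ is a positive integer with $N\le D$.

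The crucial step is a $p$-adic valuation count, and this is exactly where the hypothesis on $b_0$ is used. Put $t_0={\rm ord}_p(b_0)$; by assumption ${\rm ord}_p(b_i)\le t_0-1$ for $1\le i\le m$ (and $t_0\ge1$). Hence, whenever $N=b^{k+1}d/(b_{i_1}\cdots b_{i_k})$ is a positive integer,
\[
{\rm ord}_p(N)=(k+1)t_0+{\rm ord}_p(d)-\sum_{j=1}^{k}{\rm ord}_p(b_{i_j})\ge(k+1)t_0-k(t_0-1)=t_0+k,
\]
using ${\rm ord}_p(d)\ge0$. On the other hand, if $m_N\ne0$ then $1\le N\le D$, so $p^{{\rm ord}_p(N)}$ divides $N$ and therefore ${\rm ord}_p(N)\le\log_pD$. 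Comparing the two estimates, a nonzero summand forces $t_0+k\le\log_pD$, i.e. $k\le K:=\lfloor\log_pD\rfloor-t_0$. In particular, if $K<0$ then every term of $g_d$ vanishes for all $d\ge1$ and we are done; otherwise only the finitely many levels $0\le k\le K$ can contribute.

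It then remains to bound $d$. For $0\le k\le K$ and any admissible indices $i_1,\cdots,i_k$, a nonzero term requires $N=b^{k+1}d/(b_{i_1}\cdots b_{i_k})\le D$, hence
\[
d\le\frac{D\,b_{i_1}\cdots b_{i_k}}{b^{k+1}}\le\frac{D}{b}\Bigl(\frac{b_m}{b}\Bigr)^{k}\le\frac{D}{b}\Bigl(\frac{b_m}{b}\Bigr)^{K},
\]
the last step because $b_m/b>1$, so the middle bound is increasing in $k$. Consequently $g_d=0$ for every $d>\frac{D}{b}(b_m/b)^{K}$, which is precisely the assertion that $g_d=0$ for $d\gg0$; combined with the product formula from Theorem \ref{t4} this gives the stated shape of $g(x)$. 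I do not expect a genuine obstacle here: the only care needed is in the valuation bookkeeping (applying ${\rm ord}_p(b_{i_j})\le t_0-1$ at each index and keeping ${\rm ord}_p(d)\ge0$), and the inequality ${\rm ord}_p(b_0)>{\rm ord}_p(b_i)$ was tailor-made so that this count collapses the formula for $g_d$ to finitely many terms, each of which forces $d$ to be bounded.
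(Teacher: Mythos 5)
Your proposal is correct and takes essentially the same route as the paper: it starts from the explicit formula for $g_d$ in Theorem \ref{t4} and uses the hypothesis ${\rm ord}_p(b_0)>{\rm ord}_p(b_i)$ to force a large power of $p$ into any index $bd\theta_{i_1}^{-1}\cdots\theta_{i_k}^{-1}=b^{k+1}d/(b_{i_1}\cdots b_{i_k})$ that is a positive integer, so that $m$ vanishes there once $d$ is large. The only difference is organizational: the paper splits into two cases according to the size of $k$ relative to $\frac{1}{2}\log_{\rho}(bd)$ to show the index tends to infinity with $d$, whereas you cap $k\le\lfloor\log_p D\rfloor-{\rm ord}_p(b_0)$ outright via the valuation count and then bound $d$ explicitly, a slightly cleaner and more quantitative version of the same estimate.
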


\begin{proof}
By Theorem \ref{t4}, it suffices to prove
\begin{equation}\label{e42}
g_d=\sum _{k=0}^{+\infty}(-1)^k\sum _{1\le i_1,\cdots ,i_k\le m}\frac {1}{e}\nu _{i_1}\cdots \nu _{i_k}m_{bd\theta _{i_1}^{-1}\cdots \theta _{i_k}^{-1}}
\end{equation}
is zero for $d$ sufficiently large.

For $1\le i\le m$, let
\begin{equation}\label{e43}
\theta _i=\frac {b_i}{b_0}=\rho _ip^{-w_i}\ {\rm with}\ {\rm ord}_p(\rho _i)=0\ {\rm and}\ w_i\ge 1.
\end{equation}
Obviously, $\rho _i>1$. Let $\rho =\max \{\rho _1,\cdots ,\rho _m\}>1.$

Below, we will show that, for sufficiently large $d$,
if $bd\theta _{i_1}^{-1}\cdots \theta _{i_k}^{-1}\in \mathbb N$, then it is also large. As $m_d=0$ for $d\gg 0$, this will imply $$m_{bd\theta _{i_1}^{-1}\cdots \theta _{i_k}^{-1}}=0,\ {\rm for}\ d\gg 0.$$

Now assume \begin{equation}\label{e44new}bd\theta _{i_1}^{-1}\cdots
\theta _{i_k}^{-1}=bd\rho _{i_1}^{-1}\cdots \rho
_{i_k}^{-1}p^{w_1}\cdots p^{w_k}\in \mathbb N.\end{equation}
By (\ref{e43}), the
fractional part of (\ref{e44new}), $\rho _{i_1}^{-1}\cdots \rho
_{i_k}^{-1}$, has denominator which is not divided by $p,$ so
\begin{equation}\label{e44}
bd\rho _{i_1}^{-1}\cdots \rho _{i_k}^{-1}\in \mathbb N
\end{equation}

Then we divide the proof into two cases.

\textbf{case 1:} $k\le \frac {\displaystyle 1}{\displaystyle 2}\log
_{\rho }bd$. Then
\begin{equation}\label{e45}
bd\theta _{i_1}^{-1}\cdots \theta _{i_k}^{-1}\ge bd\rho _{i_1}^{-1}\cdots \rho _{i_k}^{-1}\ge bd\rho ^{-k}\ge \sqrt {bd}.
\end{equation}

\textbf{case 2:} $k> \frac {\displaystyle 1}{\displaystyle 2}\log
_{\rho }bd$. Then, from Equation (\ref{e44}),
\begin{equation}\label{e46}
bd\theta _{i_1}^{-1}\cdots \theta _{i_k}^{-1}\ge p^{w_{i_1}}\cdots p^{w_{i_k}}\ge p^k\ge p^{ \frac {1}{2}\log _{\rho }bd}.
\end{equation}
Combining (\ref{e45}) and (\ref{e46}), we get
\begin{equation}\label{e47}
bd\theta _{i_1}^{-1}\cdots \theta _{i_k}^{-1}\ge \min \{\sqrt {bd}, p^{ \frac {1}{2}\log _{\rho }bd}\}
\end{equation}
if $bd\theta _{i_1}^{-1}\cdots \theta _{i_k}^{-1}\in \mathbb N$.

As $m_d=0$ if either $d\not \in \mathbb N$ or $d\gg 0$, from (\ref{e47}), we get $$m_{bd\theta _{i_1}^{-1}\cdots \theta _{i_k}^{-1}}=0,\ {\rm for\ all}\ 1\leq i_1,\cdots, i_k\leq m,\ {\rm if}\ d\gg 0.$$
Then, from (\ref{e42}), we get $g_d=0$ for $d\gg 0$.
\end{proof}

\section{Contradiction}
The purpose of this section is to prove the following theorem.
\begin{theorem}\label{main}
 Assume
 \begin{equation*}
H(x)=\prod \limits _{d\in \mathbb N'}\Phi
_d(x)^{c_d}\ {\rm with}\ c_d\in
\mathbb Q\ {\rm s.t.}\ c_1=-1,\ c_d=0\ {\rm for}\ d\gg 0,
\end{equation*}where $\Phi _d(x)$ be the the cyclotomic polynomial of order $d$,
defined by Equation (\ref{d1new}) below. Also let $\gcd (b_0,\cdots,b_n)=1$. Then Equation
(\ref{XIII}) has no solution $g(x)$ such that
\begin{equation*}
g(x)=\prod _{d\in \mathbb N'}(1-x^d)^{g_d}
\end{equation*}
with $g_d\in \mathbb Q$ and $g_d=0$ for $d\gg 0$.
\end{theorem}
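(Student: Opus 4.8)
I would argue by contradiction, assuming $g(x)=\prod_{d\in\mathbb N'}(1-x^d)^{g_d}$ with $g_d\in\mathbb Q$ and $g_d=0$ for $d\gg 0$ is a solution of (\ref{XIII}).

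The first step is to rewrite both sides in the basis of cyclotomic polynomials. Using $1-x^n=\prod_{d\mid n}\Phi_d(x)$ (with the normalization $\Phi_1(x)=1-x$ of (\ref{d1new})), $g$ becomes $g(x)=\prod_{d\in\mathbb N'}\Phi_d(x)^{\gamma_d}$, where $\gamma_d=\sum_{d\mid e}g_e$ is a finite sum; so $(\gamma_d)$ is again finitely supported, $\gamma_d=0$ for $d\notin\mathbb N'$, and $g$ is recovered from $\gamma$ by M\"obius inversion. Next I would use the identity $\Phi_e(x^{b})=\prod_{d:\,d/\gcd(d,b)=e}\Phi_d(x)$ — checked either by comparing roots or, staying inside $\mathbb C[[x^{\Lambda}]]$, by a logarithmic-derivative computation of the type used in the proof of Theorem \ref{t2} (via Proposition \ref{p6}) — to expand each $g(x^{b_i})$. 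Collecting the exponent of each $\Phi_d$ then gives
\begin{equation*}
g(x^{b_0})^{e_0}g(x^{b_1})^{e_1}\cdots g(x^{b_m})^{e_m}=\prod_{d\in\mathbb N'}\Phi_d(x)^{\;\sum_{i=0}^{m}e_i\,\gamma_{d/\gcd(d,b_i)}},
\end{equation*}
and comparing exponents with $H(x)=\prod_{d\in\mathbb N'}\Phi_d(x)^{c_d}$ — legitimate because, translated back into the $(1-x^d)$-basis, it is exactly Proposition \ref{p9} — reduces (\ref{XIII}) to the linear system
\begin{equation*}
\sum_{i=0}^{m}e_i\,\gamma_{d/\gcd(d,b_i)}=c_d\qquad\text{for every }d\in\mathbb N'.
\end{equation*}

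Now I would squeeze this system. The equation at $d=1$ reads $(e_0+e_1+\cdots+e_m)\gamma_1=c_1=-1$, so $\gamma_1=-1/(e_0+\cdots+e_m)$ is a nonzero rational whose denominator, since $m\ge 1$, is an integer $\ge 2$ that in particular does not divide $e_m$. On the other hand the hypothesis $\gcd(b_0,\dots,b_m)=1$ implies that every $d\in\mathbb N'$ with $d>1$ satisfies $\gcd(d,b_i)>1$ for some $i$, hence $d/\gcd(d,b_i)<d$; using this one can run the equations indexed by multiples of $b_m$ from the top downward — finite support makes all $\gamma_d$ with $d$ large vanish — solving successively for each $\gamma_D$ from the equation at $d=b_mD$, in which $\gamma_D$ occurs with coefficient $e_m$ while every other $\gamma$ that occurs has strictly larger index. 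This pins $\gamma$ down uniquely and controls which primes (and to which powers) can appear in its denominators; feeding that information back into the value $\gamma_1=-1/(e_0+\cdots+e_m)$ forced above — or, if the conflict surfaces earlier, into one of the "mixed'' equations at an index $d$ divisible by primes coming from several different $b_i$, where two determinations of the same $\gamma$-value collide — produces the contradiction. It is here that the integrality of the $c_d$ is needed: in the situation in which this result will be used, $H$ is the $\mathbb N'$-cyclotomic part of $P(x)/(1-x)$ with $P\in\mathbb Z[x]$, so $c_d\in\mathbb Z$ (indeed $c_d\ge 0$ for $d>1$), and it is this integrality, together with $c_1=-1$, that cannot be reconciled with $\gcd(b_0,\dots,b_m)=1$.

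The step I expect to be the main obstacle is precisely this final arithmetic: organizing the recursion over the divisor lattice $\mathbb N'$ so that at each stage exactly one unknown is pinned down by a "pivot'' coefficient prime to the relevant prime, and then verifying that $\gcd(b_0,\dots,b_m)=1$ is exactly the condition forcing one of the remaining constraints — ultimately the one at $d=1$ — to be unsatisfiable. By contrast, the passage to cyclotomic coordinates, the substitution identity, and the reduction to the linear system are purely formal manipulations, resting only on the ring structure of $\mathbb C[[x^{\Lambda}]]$ and on Propositions \ref{p6} and \ref{p9}.
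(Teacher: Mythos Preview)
Your reduction to the linear system is exactly the paper's: the identity $\Phi_e(x^b)=\prod_{d:\,d/\gcd(d,b)=e}\Phi_d(x)$ is equivalent to the paper's Lemma~\ref{ll1} (indeed $d/\gcd(d,b)=[d/b]$ in the notation of (\ref{ee9})), and the resulting equations $\sum_i e_i\gamma_{d/\gcd(d,b_i)}=c_d$ are precisely (\ref{ee14}).

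The gap is in the endgame. Your top-down recursion along multiples of $b_m$ does determine the $\gamma_D$ uniquely, but the only denominator control it yields is $\gamma_D\in\mathbb Z[1/e_m]$ (assuming $c_d\in\mathbb Z$), and this does \emph{not} contradict $\gamma_1=-1/A$ with $A=\sum_i e_i$: for instance $m=1$, $e_0=e_1=2$ gives $A=4\in\mathbb Z[1/2]^{-1}$. The claim that $A$ ``does not divide $e_m$'' is true but irrelevant---what you would need is that $A$ has a prime factor not dividing $e_m$, which is false in general. The vaguer suggestion of finding a collision at a ``mixed'' index is not an argument; the system is consistent over~$\mathbb Q$ (Theorem~\ref{t4} already constructed the unique solution), so no two equations ever contradict each other---the only possible conflict is arithmetic, and you have not located it.

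What the paper does instead is to specialize to the prime-power tower $d=p^n$, where $p$ is a prime dividing some $b_i$ but not all of them (this is where $\gcd(b_0,\dots,b_m)=1$ enters). Writing $H_n'=\gamma_{p^n}$ gives a genuine constant-coefficient linear recurrence $a_0H_n'+a_1H_{n-1}'+\cdots+a_tH_{n-t}'=c_{p^n}$ with $t\ge 1$ and $a_0a_t\ne 0$. After subtracting the $n=0$ equation and passing to the generating series in a formal variable $z$, one obtains a polynomial identity $(a_0+\cdots+a_tz^t)\cdot(\text{something})=(\text{polynomial with coefficients summing to }1)$ in $\mathbb Q[z]$; Gauss's Lemma then forces $d/A\in\mathbb Z$ where $d=\gcd(a_0,\dots,a_t)$, contradicting $A=\sum a_i>a_0\ge d$. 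The two genuinely new ideas you are missing are (i) restricting to a single prime tower to get a clean one-variable recurrence, and (ii) using Gauss's Lemma on the resulting generating-function identity to extract the contradiction.
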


The conclusion of Theorem \ref{main} contradicts to that of Theorem
\ref{t5} under common conditions. The proof of Theorem \ref{main} makes use of cyclotomic polynomials and
Gauss's lemma. Note that cyclotomic polynomials first appear in the
work of Cilleruelo and Ru\'{e} ~\cite{Cilleruelo}.

 We call
\begin{equation}\label{d1new}
\Phi _n(x)=\prod _{u\in (\mathbb Z /n\mathbb Z)^*}(1-\exp (u\frac
{2\pi {\rm i}}{n})x)
\end{equation}
the cyclotomic polynomial of order $n$, where $(\mathbb Z /n\mathbb
Z)^*$ denotes the set of invertible classes modulo $n$, that is,
\begin{equation*}
(\mathbb Z /n\mathbb Z)^*=\{u\in \mathbb N \mid 1\le u\le n,\
(u,n)=1\}.
\end{equation*}

Note our setting is a little different from the traditional case, in
which
\begin{equation*}
\Phi _n(x)=\prod _{u\in (\mathbb Z /n\mathbb Z)^*}(x-\exp (u\frac
{2\pi {\rm i}}{n})).
\end{equation*}However, they differ up to multiplying by $\pm 1$. The
remarkable point is that $\Phi _n(0)=1$ in our setting.

The following facts about cyclotomic polynomials are well known.

(1) $\Phi _n(x)$ is irreducible in $\mathbb Z[x]$. As a consequence,
if a polynomial $P(x)\in \mathbb Z[x]$ vanishes at a primitive root
of unity of order $n$, then there exists a positive integer $s$ such
that $P(x)=\Phi _n(x)^sQ(x)$, where $Q(x)\in \mathbb Z[x]$ and
$Q(\xi )\ne 0$ for all $\xi $,  $n$-th primitive roots of unity.

(2) $\{\Phi _n(x)\mid n\in \mathbb N\}$ and $\{1-x^n\mid n\in
\mathbb N\}$ can represent each other:
\begin{equation}\label{ee1}
1-x^n=\prod _{d|n}\Phi _d(x),\ \Phi _n(x)=\prod _{d|n}(1-x^d)^{\mu
(\frac {n}{d})}
\end{equation}
where $\mu (\cdot )$ is the M\"{o}bius function. This also implies that $${\rm deg}\ \Phi _n(x)=n\prod _{p|n}(1-\frac
{\displaystyle 1}{\displaystyle p})=\varphi (n),$$ where $\varphi$ is the Euler
function.

To Theorem \ref{main}, we need the following lemma.
\begin{lemma}\label{ll1}
\begin{equation}\label{ee2}
 \Phi _d(x^a)=\prod _{d\langle a|d)|f|ad} \Phi _f(x),
 \end{equation}
 where, for $a,d\in \mathbb N,$
 \begin{equation}\label{ee3}
 \langle a|d)=\prod _{p|(a,d)}p^{{\rm ord}_p(a)}.
 \end{equation}
\end{lemma}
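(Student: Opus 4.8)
The plan is to prove the identity $\Phi_d(x^a)=\prod_{d\langle a|d)\,|\,f\,|\,ad}\Phi_f(x)$ by comparing the roots of both sides, exploiting the factorization of $\Phi$ into linear factors over $\mathbb{C}$ already recorded in Equation (\ref{d1new}). Since every polynomial appearing here has constant term $1$ (by the normalization convention adopted just before the lemma) and the same leading behaviour can be checked by degree count, it suffices to show the two sides have the same multiset of roots. A number $\zeta$ is a root of $\Phi_d(x^a)$ precisely when $\zeta^a$ is a primitive $d$-th root of unity, i.e. when $\zeta$ is a root of unity whose order $f$ satisfies $\operatorname{ord}(\zeta^a)=d$. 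So the whole proof reduces to the elementary number-theoretic fact: $\operatorname{ord}(\zeta^a)=d$ if and only if $f:=\operatorname{ord}(\zeta)$ is a multiple of $d$, a divisor of $ad$, and $f$ is divisible by $d\langle a|d)$ where $\langle a|d)=\prod_{p\mid(a,d)}p^{\operatorname{ord}_p(a)}$.

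**First I would** establish this order computation. Writing $f=\operatorname{ord}(\zeta)$, one has $\operatorname{ord}(\zeta^a)=f/\gcd(a,f)$, so the condition $\operatorname{ord}(\zeta^a)=d$ becomes $f=d\cdot\gcd(a,f)$. This forces $d\mid f$ and $f\mid ad$ (since $\gcd(a,f)\mid a$). The subtle part is the extra lower bound $d\langle a|d)\mid f$: prime by prime, if $p\nmid a$ then $p\nmid\gcd(a,f)$, so $\operatorname{ord}_p(f)=\operatorname{ord}_p(d)$; if $p\mid a$ one checks from $f=d\gcd(a,f)$ that $\operatorname{ord}_p(f)\ge\operatorname{ord}_p(d)+\operatorname{ord}_p(a)$ exactly when $p\mid d$ as well (if $p\nmid d$, then $\operatorname{ord}_p(\gcd(a,f))=0$ is forced, giving $\operatorname{ord}_p(f)=0$). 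This is precisely the statement that $f$ is a multiple of $d\langle a|d)$ and that $\operatorname{ord}_p(f)=\operatorname{ord}_p(d)$ for $p\mid d$, $p\nmid a$. I would package this as: the valid orders $f$ are exactly those with $d\langle a|d)\mid f\mid ad$, together with a routine verification that each such $f$ does arise (i.e. the inequalities above are also sufficient, which is a direct check that $f=d\gcd(a,f)$ holds for such $f$).

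**Then** I would assemble the count: for each admissible $f$, the roots $\zeta$ of order $f$ with $\zeta^a$ primitive of order $d$ appear in $\Phi_d(x^a)$ with total multiplicity equal to the number of primitive $f$-th roots of unity $\zeta$ with $\operatorname{ord}(\zeta^a)=d$, which — since ALL primitive $f$-th roots satisfy $\operatorname{ord}(\zeta^a)=f/\gcd(a,f)=d$ once $f$ is admissible — is exactly $\varphi(f)=\deg\Phi_f$. Hence the root multiset of $\Phi_d(x^a)$ is the disjoint union over admissible $f$ of the primitive $f$-th roots, each with multiplicity one, i.e. the root multiset of $\prod_{d\langle a|d)\mid f\mid ad}\Phi_f(x)$. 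A degree check ($\deg\Phi_d(x^a)=a\varphi(d)=\sum_{d\langle a|d)\mid f\mid ad}\varphi(f)$, which follows from the identity $\sum_{e\mid n}\varphi(e)=n$ applied after the substitution $f=d\langle a|d)\cdot e$) confirms nothing is lost, and since both sides are monic-normalized with constant term $1$, they are equal.

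**The main obstacle** I anticipate is bookkeeping the prime-by-prime valuation argument cleanly — in particular getting the quantity $\langle a|d)=\prod_{p\mid(a,d)}p^{\operatorname{ord}_p(a)}$ to emerge naturally rather than as a guess. The cleanest route is probably to note $\langle a|d)=\frac{a}{a/\langle a|d)}$ where $a/\langle a|d)$ is the part of $a$ coprime to $d$, and to rephrase the admissibility condition as: $d\mid f$, $f/d\mid a$, and $f/d$ is coprime to $a/(f/d)$ — or more symmetrically, $f/d\mid a$ and every prime of $f/d$ divides $d$. One should double-check the edge cases $a=1$ (where $\langle 1|d)=1$ and the identity degenerates to $\Phi_d(x)=\Phi_d(x)$) and $(a,d)=1$ (where $\langle a|d)=1$ and the product runs over $d\mid f\mid ad$); both fall out correctly from the order formula. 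Everything else is a standard comparison of cyclotomic factorizations.
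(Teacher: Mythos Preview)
Your approach is essentially the paper's: both factor $\Phi_d(x^a)$ into linear factors over $\mathbb{C}$, determine which orders $f$ the resulting roots of unity can have, show these are exactly the $f$ with $d\langle a|d)\mid f\mid ad$, and finish with the degree identity $\sum_{d\langle a|d)\mid f\mid ad}\varphi(f)=a\varphi(d)$. The paper parametrizes the roots explicitly as $\exp\bigl(\tfrac{u+kd}{ad}\,2\pi i\bigr)$ and argues divisibility directly, while you use the formula $\operatorname{ord}(\zeta^a)=f/\gcd(a,f)$ and a prime-by-prime analysis of $f=d\gcd(a,f)$; these are the same argument in slightly different clothing.

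One small slip worth correcting: your parenthetical claim that ``if $p\nmid d$, then $\operatorname{ord}_p(\gcd(a,f))=0$ is forced, giving $\operatorname{ord}_p(f)=0$'' is false. When $p\mid a$ and $p\nmid d$, the equation $\operatorname{ord}_p(f)=\operatorname{ord}_p(d)+\min(\operatorname{ord}_p(a),\operatorname{ord}_p(f))=\min(\operatorname{ord}_p(a),\operatorname{ord}_p(f))$ is satisfied by \emph{any} $0\le \operatorname{ord}_p(f)\le \operatorname{ord}_p(a)$, not only by $\operatorname{ord}_p(f)=0$. This does not damage your proof, since such primes do not divide $\langle a|d)$ and are already controlled by the upper bound $f\mid ad$; but the statement as written is incorrect and should be removed or amended.
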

\begin{proof}
\begin{equation}\label{ee4}
\begin{aligned}
\Phi _d(x^a)&=\prod _{u\in (\mathbb Z /d\mathbb Z)^*}(1-\exp (u\frac
{2\pi {\rm i}}{d})x^a)\\
&=\prod _{u\in (\mathbb Z /d\mathbb Z)^*}\prod _{1\le k\le a}(1-\exp
(u\frac {2\pi {\rm i}}{ad})\exp (k\frac {2\pi {\rm i}}{a})x)\\
&=\prod _{u\in (\mathbb Z /d\mathbb Z)^*}\prod _{1\le k\le a}(1-\exp (\frac {u+kd}{ad}2\pi {\rm i})x)\\
\end{aligned}
\end{equation}
Assume $$\xi =\exp (\frac {u+kd}{ad}2\pi {\rm i})$$ is a primitive
$f$-th root of unity. Then $f$ is the smallest positive integer such
that $\xi ^f=1$, i.e., $ad|f(u+kd)$.

Obviously, $f\mid ad$. Since $(u,d)=1$, we have $(u+kd,d)=1$. Then Equation
(\ref{ee3}) implies that $(u+kd,\langle a|d)d)=1$. Since $\langle a|d)d\mid
f(u+kd)$, we get $\langle a|d)d\mid f$. Thus $\langle a|d)d\mid
f\mid ad$.

Since each factor $$(1-\exp (\frac {\displaystyle
u+kd}{\displaystyle ad}2\pi {\rm i})x)$$ in Equation (\ref{ee4}) appears one
time, we get
\begin{equation}\label{ee5}
\Phi _d(x^a)\mid \prod _{d\langle a|d)\mid f\mid ad}\Phi _f(x).
\end{equation}

The degree of the left hand side of (\ref{ee5}) is $a\varphi (d)$.
The degree of the right hand side is
\begin{equation*}
\begin{aligned}
\sum _{d\langle a|d)\mid f\mid ad}\varphi (f)&=\sum _{f'\mid \frac
{a}{\langle a|d)}}\varphi (f'd\langle a|d))\\&=\varphi(d\langle
a|d))\sum _{f'\mid \frac
{a}{\langle a|d)}}\varphi (f')\ \ \ \ {\rm as}\ (\frac
{a}{\langle a|d)}, d\langle
a|d))=1\\
&=\varphi (d\langle a|d))\frac{a}{\langle a|d)}
\\&=\varphi (d)a\ \ \ \ {\rm as}\  p\mid\langle
a|d)\Rightarrow p\mid d.
\end{aligned}
\end{equation*}
Therefore,
$${\rm deg}\ \Phi _d (x^a) ={\rm deg}
\prod \limits_{d\langle a|d)|f|ad}\Phi _f(x).$$ Since their constant
terms both equal to 1, they must be equal.
\end{proof}
Now assume
\begin{equation}\label{ee6}
g(x)=\prod _{d\in \mathbb N}\Phi _d(x)^{h_d},\ {\rm with}\ h_d=0\
{\rm for}\ d\gg0.
\end{equation}

Let $a\in \mathbb N$, by Lemma \ref {ll1},
\begin{equation}\label{ee7}
\begin{aligned}
g(x^a)&=\prod _{d\in \mathbb N}\Phi _d(x^a)^{h_d}\\
&=\prod _{d\in \mathbb N}\prod _{\langle a|d)d\mid f\mid ad}\Phi
_f(x)^{h_d}\\
&=\prod _{f\in \mathbb N}\Phi _f(x)^{\sum_{\langle a|d)d\mid f\mid
ad}h_d}.\\
\end{aligned}
\end{equation}

For fixed $f$,
\begin{equation}\label{ee8new}\langle a|d)d\mid f\mid ad\Leftrightarrow \langle a|d)a^{-1}d\mid fa^{-1}\mid d. \end{equation}
Since $a/\langle a|d)$ and $d$ have no common divisor, we have
\begin{equation}\label{ee8}
{\rm ord} _p(fa^{-1})\left \{\begin{aligned}\ &={\rm ord}_p(d),
&{\rm
 if}\ p\mid d,\\
 &\le 0,\ &{\rm if}\ p\nmid d,\\
 \end{aligned}
 \right.
 \end{equation}
 where $p$ is any prime.

 For positive $y\in \mathbb Q$, denote
 \begin{equation}\label{ee9}
 [y]=\prod _{{\rm ord}_p(y)>0}p^{{\rm ord}_p(y)}.
 \end{equation}

 From (\ref{ee8new}) and (\ref{ee8}), we have, for fixed $f$,
 \begin{equation}\label{ee10}
 \langle a|d)d\mid f\mid ad\Rightarrow d=[\frac {f}{a}].
 \end{equation}

 Combining (\ref{ee7}) and (\ref{ee10}), we get
 \begin{equation}\label{ee11}
 g(x^a)=\prod _{f\in \mathbb N}\Phi _f(x)^{h_{[f/a]}}.
 \end{equation}

From Equation (\ref{ee11}), we get the following formula.
\begin{lemma}\label{ll2}
Assume \begin{equation*}\label{ee6}
g(x)=\prod _{d\in \mathbb N}\Phi _d(x)^{h_d},\ {\rm with}\ h_d=0\
{\rm for}\ d\gg0.
\end{equation*}
 Then
\begin{equation}\label{ee12}
g(x^{b_0})^{e_0}g(x^{b_1})^{e_1}\cdots g(x^{b_m})^{e_m}=\prod _{d\in
\mathbb N}\Phi _d(x)^{\sum _{i}e_ih_{[d/b_i]}}.
\end{equation}
\end{lemma}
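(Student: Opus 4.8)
The plan is to deduce the identity directly from the single‑exponent formula (\ref{ee11}), which has already been established, so that the lemma reduces to a purely formal manipulation of exponents. First I would apply (\ref{ee11}) with $a=b_i$ for each $i=0,1,\dots ,m$, obtaining
\begin{equation*}
g(x^{b_i})=\prod _{f\in \mathbb N}\Phi _f(x)^{h_{[f/b_i]}}.
\end{equation*}
Since $h_d=0$ for $d\gg 0$ and, from the definition (\ref{ee9}) of $[\cdot]$, one checks that $[f/b_i]$ grows without bound as $f\to\infty$ (indeed $[f/b_i]\ge f/b_i^2$), each of these is a finite product, so raising it to the $e_i$‑th power simply multiplies the exponents:
\begin{equation*}
g(x^{b_i})^{e_i}=\prod _{f\in \mathbb N}\Phi _f(x)^{e_ih_{[f/b_i]}}.
\end{equation*}

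Next I would multiply these $m+1$ identities together. Because only finitely many exponents are nonzero in each factor, and there are only finitely many factors, the resulting product of products may be regrouped according to the index $f$:
\begin{equation*}
g(x^{b_0})^{e_0}\cdots g(x^{b_m})^{e_m}=\prod _{i=0}^{m}\prod _{f\in \mathbb N}\Phi _f(x)^{e_ih_{[f/b_i]}}=\prod _{f\in \mathbb N}\Phi _f(x)^{\sum _{i=0}^{m}e_ih_{[f/b_i]}}.
\end{equation*}
Renaming the dummy index $f$ as $d$ yields exactly (\ref{ee12}).

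In fact there is no substantive obstacle at this point: the real content has already been carried out in Lemma \ref{ll1} and in the passage from (\ref{ee7}) to (\ref{ee11}) — in particular in the observation (\ref{ee10}) that for fixed $f$ the condition $\langle a|d)d\mid f\mid ad$ pins down $d=[f/a]$, which rests on the coprimality of $a/\langle a|d)$ and $d$ together with the $p$‑adic valuation computation (\ref{ee8}). Granting all that, the present lemma is only the bookkeeping step that packages the one‑exponent formula into the multi‑exponent product occurring on the left‑hand side of (\ref{XIII}); the one thing to keep track of is which of the products involved are finite, and that is immediate from $h_d=0$ for $d\gg0$.
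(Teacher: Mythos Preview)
Your proof is correct and follows exactly the paper's approach: the paper simply states that Lemma~\ref{ll2} follows from equation~(\ref{ee11}), and your argument spells out precisely this deduction by applying~(\ref{ee11}) with $a=b_i$, raising to the $e_i$-th power, and multiplying. Your added remark on finiteness (via $[f/b_i]\to\infty$) is a welcome detail that the paper leaves implicit; in fact one even has $[f/b_i]=f/\gcd(f,b_i)\ge f/b_i$, slightly sharper than your stated bound.
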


To prove the main result of this section, we also need Gauss's
Lemma. Now we recall it.

Let $p(x)=a_0+a_1x+\cdots +a_nx^n$ be a non-zero polynomial in
$\mathbb Z[x]$. If the greatest common divisor of $a_0,a_1,\cdots ,a_n$ is 1,
then $p(x)$ is called a primitive polynomial.

Every non-zero polynomial $q(x)\in \mathbb Q [x]$ can be uniquely
written as
\begin{equation*}
q(x)=cq_1(x)
\end{equation*}
with $c>0$ and $q_1(x)\in \mathbb Z [x]$ being primitive. We call
$c$ the content of $q(x)$ and denote it by ${\rm cont} (q)$. The
following version of Gauss's Lemma will be found in page 181 of Lang
\cite{Lang}, Theorem 2.1 of Chapter IV.
\begin{theorem}\label{Gauss} (Gauss's Lemma) Let $p,q\in \mathbb Q[x]$ be
non-zero polynomials. Then
\begin{equation*}
{\rm cont}(p\cdot q)={\rm cont}(p)\cdot {\rm cont}(q).
\end{equation*}
\end{theorem}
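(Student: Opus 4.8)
The plan is to deduce Gauss's Lemma from the single case of primitive polynomials over $\mathbb Z$, using the unique content decomposition $q=c\,q_1$ (with $c>0$ and $q_1\in\mathbb Z[x]$ primitive) that is recorded just above the statement. So the entire argument reduces to one key lemma: the product of two primitive polynomials in $\mathbb Z[x]$ is again primitive.

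To prove that key lemma, I would argue by contradiction. Suppose $p_1,q_1\in\mathbb Z[x]$ are primitive but $p_1q_1$ is not; then some prime $\ell$ divides every coefficient of $p_1q_1$, so reducing all coefficients modulo $\ell$ gives $\bar p_1\,\bar q_1=0$ in $(\mathbb Z/\ell\mathbb Z)[x]$. Since $\ell$ is prime, $\mathbb Z/\ell\mathbb Z$ is a field, hence $(\mathbb Z/\ell\mathbb Z)[x]$ is an integral domain, and therefore $\bar p_1=0$ or $\bar q_1=0$; that is, $\ell$ divides all coefficients of $p_1$ or all coefficients of $q_1$, contradicting primitivity. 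This reduction modulo a prime is the crux of the proof, and it is exactly the step where primality — equivalently, the absence of zero divisors in $\mathbb Z/\ell\mathbb Z$ — is indispensable; no purely "size-counting" estimate on the coefficients of $p_1q_1$ would give it.

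To finish, I would take arbitrary nonzero $p,q\in\mathbb Q[x]$, write $p={\rm cont}(p)\,p_1$ and $q={\rm cont}(q)\,q_1$ with $p_1,q_1\in\mathbb Z[x]$ primitive, and observe $pq=\bigl({\rm cont}(p)\,{\rm cont}(q)\bigr)(p_1q_1)$, where ${\rm cont}(p)\,{\rm cont}(q)>0$ and, by the key lemma, $p_1q_1$ is primitive. By the uniqueness of the content decomposition applied to $pq$, this forces ${\rm cont}(pq)={\rm cont}(p)\,{\rm cont}(q)$ (and incidentally that $p_1q_1$ is the primitive part of $pq$), which is the asserted identity. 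An equivalent packaging, if one prefers to avoid explicitly invoking uniqueness, is to fix a prime $\ell$, set $v_\ell(q)=\min_i {\rm ord}_\ell(a_i)$ for $q=\sum_i a_ix^i$, divide $p$ and $q$ by $\ell^{v_\ell(p)}$ and $\ell^{v_\ell(q)}$ to make them $\ell$-primitive, and run the same mod-$\ell$ argument to get $v_\ell(pq)=v_\ell(p)+v_\ell(q)$ for every $\ell$; since ${\rm ord}_\ell({\rm cont}(q))=v_\ell(q)$ for all $\ell$, multiplicativity of ${\rm cont}$ then follows prime by prime.
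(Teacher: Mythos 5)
Your proof is correct and complete: the reduction-mod-$\ell$ argument that a product of primitive polynomials in $\mathbb Z[x]$ is primitive, combined with the uniqueness of the content decomposition (or, equivalently, your prime-by-prime valuation identity $v_\ell(pq)=v_\ell(p)+v_\ell(q)$), is exactly the standard proof of Gauss's Lemma. The paper itself does not prove this statement but cites Lang (Theorem 2.1 of Chapter IV), and your argument is essentially the one found there, so there is nothing to correct or compare beyond noting that you have supplied the proof the paper outsources.
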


Finally, we can prove Theorem \ref{main}.

\begin{proof}
Assume Equation (\ref{XIII}) has a solution $g(x)$ such that
 \begin{equation}\label{ee6new}
g(x)=\prod _{d\in \mathbb N'}(1-x^d)^{g_d},\ {\rm with}\ g_d\in
\mathbb Q\ {\rm and}\ g_d=0\
{\rm for}\ d\gg0.
\end{equation} From (\ref{ee1}), we have
\begin{equation*}
g(x)=\prod _{d\in \mathbb N'}\Phi _d(x)^{h_d}\ {\rm with}\ h_d\in
\mathbb Q\ {\rm and}\ h_d=0\ {\rm for}\ d\gg 0.
\end{equation*}

From Lemma \ref{ll2}, we get
\begin{equation}\label{ee13}
\prod _{d\in \mathbb N'}\Phi _d(x)^{\sum
_{i=0}^{m}e_ih_{[d/b_i]}}=\prod _{d\in \mathbb N'}\Phi _d(x)^{c_d}.
\end{equation}

Since $\Phi _d(x)$ is irreducible in $\mathbb Z[x]$, taking some power of Equation (\ref{ee13}) if necessary,  we get the following equations by the uniqueness
factorization property of $\mathbb Z[x]$:
\begin{equation}\label{ee14}
\sum _{i=0}^{m}e_ih_{[d/b_i]}=c_d\ {\rm for\ all}\ d\in \mathbb N'.
\end{equation}

Since $\gcd(b_0,\cdots ,b_m)=1$, there exists a prime $p$ such that
$p\mid b_0$ but $p\nmid b_i$ for some $1\le i\le m$. Taking $d=p^n\
(n\ge 0)$ in (\ref{ee14}), we get the following equations
\begin{equation}\label{ee15}
a_0h_{[p^{n}]}+a_1h_{[p^{n-1}]}+\cdots +a_th_{[p^{n-t}]}=c_{p^n}\
(n\ge 0)
\end{equation}
where
\begin{equation}\label{15++} t\ge 1\  {\rm and}\  a_0a_t\ne 0.
\end{equation}

To simplify the notations, let $h_{p^n}=H_n'$ and $c_{p^n}=C_n'$.
Then Equation (\ref{ee15}) can be written explicitly as
\begin{equation}\label{ee16}
\left \{ \begin{aligned} &a_0H_0'&+&a_1H_0'&+\cdots &+a_tH_0'&=&C_0'=-1\\
&a_0H_1'&+&a_1H_0'&+\cdots &+a_tH_0'&=&C_1'&\\
&a_0H_2'&+&a_1H_1'&+\cdots &+a_tH_0'&=&C_2'&\\
&&\cdots \ &\cdots \\
&a_0H_t'&+&a_1H_{t-1}'&+\cdots &+a_tH_0'&=&C_t'&\\
&&\cdots \ &\cdots  \\
&a_0H_{l+1}'&+&a_1H_{l}'&+\cdots &+a_tH_{l+1-t}'&=&C_{l+1}'.&\\
&&\cdots \ &\cdots  \\
\end{aligned}
\right.
\end{equation}So
\begin{equation}\label{ee17}
H_0'=-\frac{1}{A},\ {\rm where}\ A=\sum \limits_{i=0}^{t}a_i=\sum \limits_{i=0}^{m}e_i.
\end{equation}

Substracting the other equations of (\ref{ee16}) by the first
equation, and letting $H_{i-1}=H_i'-H_0',\ C_{i-1}=C_i'-C_0'\ (i\ge
1)$, we get
\begin{equation}\label{ee18}
\left \{ \begin{aligned} &a_0H_0&=C_0\\
&a_0H_1+a_1H_0&=C_1\\
&\cdots \ \cdots &\\
&a_0H_t+a_1H_{t-1}+\cdots +a_tH_0&=C_t\\
&\cdots \ \cdots &\\
&a_0H_l+a_1H_{l-1}+\cdots +a_tH_{l-t}&=C_l.
\\&\cdots \ \cdots  &\\
\end{aligned} \right.
\end{equation}

Note that \begin{equation}\label{ee19} H_k=-H_0'=\frac {1}{A},\ C_l=-C_0'=1
\end{equation}
for $k,\ l$ sufficiently large.

The Equation (\ref{ee18}) is
equivalent to the following identity in $\mathbb Q[[z]]$.
\begin{equation}\label{ee20}\begin{aligned}
&(a_0+a_1z+\cdots +a_tz^t)(H_0+H_1z+\cdots
+H_kz^k+\cdots)\\=&C_0+C_1z+C_2z^2+\cdots+C_lz^l+\cdots
\end{aligned}\end{equation}

Let $r$ (resp. $s$) be the largest $k$ (resp. $l$) such that
(\ref{ee19}) does not hold. Substituting (\ref{ee19}) into (\ref{ee20}), we get
\begin{equation}\label{ee21}
\begin{aligned}
&(a_0+a_1z+\cdots +a_tz^t)(H_0+H_1z+\cdots +H_rz^r+\frac {1}{A}\frac {z^{r+1}}{1-z})\\
=&C_0+C_1z+\cdots +C_sz^s+\frac {z^{s+1}}{1-z}
\end{aligned}
\end{equation}

Multiplying both sides of Equation (\ref{ee21}) by $1-z$, we get
\begin{equation}\label{ee22}
\begin{aligned}&\ \ \ (a_0+\cdots +a_tz^t)((H_0+\cdots
+H_rz^r)(1-z)+\frac {z^{r+1}}{A})\\
=&C_0+(C_1-C_0)z+\cdots +(C_s-C_{s-1})z^n+(1-C_s)z^{s+1}.
\end{aligned}
\end{equation}

The right hand side of (\ref{ee22}) is a primitive polynomial, since
their coefficients sum to 1. Let $d=\gcd(a_0,a_1,\cdots ,a_t)$.
From (\ref{ee22}) and Gauss's
Lemma, the content of
\begin{equation}\label{neweq2}
(H_0+\cdots
+H_rz^r)(1-z)+\frac {z^{r+1}}{A}=\frac{1}{d}.
\end{equation} So the following polynomial
\begin{equation}\label{neweq3}
d(H_0+\cdots
+H_rz^r)(1-z)+\frac {dz^{r+1}}{A}\in
\mathbb Z[z],
\end{equation} is primitive.

Evaluating (\ref{neweq3}) at $z=1$, we get
\begin{equation}\label{ee23}
\frac {d}{A}=\frac {d}{\sum _{i=0}^{t}a_i}\in \mathbb Z.
\end{equation}
From (\ref{15++}), $$\sum _{i=0}^{t}a_i>a_0\ge d>0.$$ A contradiction!
\end{proof}

\section{Proof of Theorem \ref{II}}
\vskip 0.3cm
\begin{proof}
Since $P(x)\in \mathbb Z[x]$, with $P(0)=1$ and $P(1)\ne 0$, it can be factored uniquely as
\begin{equation}
P(x)=\prod _{d\in \mathbb N'-\{1\}}\Phi _d(x)^{c_d}R(x)=Q(x)R(x),
\end{equation}
with $R(x)\in \mathbb Z[x]$ and $R(\xi)\ne 0$ if $\xi^n=1$, for some $n\in \mathbb N'$.

Let
\begin{equation*}
G(x)=\frac {P(x)}{1-x}.
\end{equation*}
Then $H(x)$, the $\mathbb N'$-th cyclotomic part of $G(x)$, can be written as
\begin{equation}\label{eee1}
H(x)=\frac {Q(x)}{1-x}=\prod _{d\in \mathbb N'}\Phi _d(x)^{c_d}=\prod _{d\in \mathbb N'}(1-x^d)^{m_d}
\end{equation}
with $c_d,\ m_d\in \mathbb Z$ such that $c_1=-1$ and $c_d=m_d=0$ for $d\gg 0$.

Assume the equation
\begin{equation}\label{eee2}
f(x^{b_0})^{e_0}f(x^{b_1})^{e_1}\cdots f(x^{b_m})^{e_m}=G(x)
\end{equation}
 has a solution $f(x)\in \mathbb C[[x]]$ with $f(0)=1.$

By Corollary \ref{c2}, the equation
\begin{equation}\label{eee3}
g(x^{b_0})^{e_0}g(x^{b_1})^{e_1}\cdots g(x^{b_m})^{e_m}=H(x)
\end{equation}
has a solution $g(x)\in \mathbb C[[x]]$ with $g(0)=1$.

From Theorem \ref{t4}, we get
\begin{equation}\label{eee4}
g(x)=\prod _{d\in \mathbb N'}(1-x^d)^{g_d}
\end{equation}
with $g_d\in\mathbb Q$.

By the assumption and Theorem \ref{t5}, we have
\begin{equation}\label{eee5}
g_d=0\ {\rm for}\ d\gg 0.
\end{equation}

If $\gcd (b_0,b_1,\cdots ,b_m)=1$, from Equation (\ref{eee1}) and Theorem \ref{main}, we know (\ref{eee5}) is impossible.
A contradiction!

Let $\gcd(b_0,b_1,\cdots ,b_m)=d>1$. Then the left hand side of Equation (\ref{eee2}) is a power series with indeterminate $x^d$, but the coefficient of
$x^n$ of the right hand side is a nonzero constant for $n$ large enough. So we still get a contradiction.

Summing up, Equation (\ref{eee2}) has no solution $f(x)\in \mathbb C[[x]]$ with $f(0)=1$, which concludes the proof.
\end{proof}

\section{Conjectures and Remarks}
In this section, we will give a conjectural answer to the question of S\'{a}rkozy and S\'{o}s in the case that all the coefficients of
linear forms are positive.

Let $k$ be an integer greater than 1. For $m\ge 1$, let
\begin{equation*}
{\rm M}=\{(1,1),(k,1),\cdots ,(k^{m-1},1),(k^m,1)\}.
\end{equation*}

Motivated by Ruzsa's example, let

\begin{equation*}
\mathcal A=\{\sum _{i=0}^{+\infty}\varepsilon _ik^{(m+1)i},\ \varepsilon _i\in \{0,1,\cdots ,k-1\}\}.
\end{equation*}
By the uniqueness of $k$-adic representation, we get the representation function
\begin{equation*}
r_{\rm M}(n,\mathcal A)=\#\{(a_0,a_1,\cdots ,a_m)\mid a_0+ka_1+\cdots +k^ma_m=n,\ a_i\in \mathcal A\}
\end{equation*}
is 1 for all $n\ge 0$.

We conjecture that these are the complete answers to S\'{a}rkozy and S\'{o}s's question.\\
{\bf Conjecture.}
For $m\ge 1$, let
\begin{equation*}
{\rm M}=\{(b_0,e_0),(b_1,e_1),\cdots ,(b_m,e_m)\}
\end{equation*}
with $1\le b_0 < b_1<\cdots <b_m$. There exists an infinite subset $\mathcal A$ such that $r_{\rm M}(n, \mathcal A)$ is constant for $n$
large enough only if
\begin{equation*}
{\rm M}=\{(1,1),(k,1),\cdots ,(k^{m-1},1),(k^m,1)\}
\end{equation*}
for some $k>1$.

Our initial plan is to prove the above conjecture for the case $b_0\ge 2$. But it is not successful.
The only problem happens in Theorem \ref{t5}. If it can be improved, the case of $b_0=2$ is done by our rest arguments.

Note that Theorem \ref{t5} does not hold for general $b_0,b_1,\cdots ,b_m$ with $b_0\ge 2$ and general $H(x)$.
For example, take $\mathcal A$ to be the set in Ruzsa's example, that is,
\begin{equation*}
\mathcal A=\{\sum _{i=0}^{+\infty}\varepsilon _i2^{2i},\ \varepsilon _i\in \{0,1\}\}.
\end{equation*}
Let $g(x)$ be the generating function of $\mathcal A$.
From Page 4, we have
\begin{equation*}
g(x)g(x^2)=\frac {1}{1-x}\ \ {\rm and}\ \ g(x)=\prod _{n=1}^{+\infty}(\frac {1}{1-x^{2^n}})^{(-1)^n}.
\end{equation*}
Take $(b_0,b_1,b_2,b_3)=(2,3,4,6)$, we get
\begin{equation*}
g(x^2)g(x^3)g(x^4)g(x^6)=\frac {1}{1-x^2}\frac {1}{1-x^3}.
\end{equation*}
This shows that Theorem \ref{t5} is not true in general.

For the case $b_0=1$, the equation
\begin{equation*}
f(x^{b_0})^{e_0}f(x^{b_1})^{e_1}\cdots f(x^{b_m})^{e_m}=\frac {P(x)}{1-x}=G(x)
\end{equation*}
always has a power series solution
\begin{equation*}
f(x)=\prod _{k=0}^{+\infty }\left (\prod _{1\le i_1,\cdots ,i_k\le m}G(x^{b _{i_1}\cdots ,b _{i_k}})^{e^{-1}\nu _{i_1}\cdots \nu _{i_k}}\right )^{(-1)^k}.
\end{equation*}
To solve S\'{a}kozy and S\'{o}s's question, we need to decide whether all the coefficients of $f(x)$ belong to $\{0,1\}$.

It seems difficult to treat the coefficients of infinite products.
For example, the Ramanujan tau function $\tau:\ \mathbb
N\rightarrow\mathbb Z$ is defined by the following identity in $\mathbb C[[q]]$:
\begin{equation*}
q\prod _{n=1}^{+\infty}(1-q^n)^{24}=\sum _{n=1}^{+\infty}\tau (n)q^n.
\end{equation*}
Lehmer conjectured that $\tau(n)\neq0$ for all $n$, an assertion
sometimes known as Lehmer's conjecture. Lehmer verified the
conjecture for $n<214928639999$ (See page 22 of \cite{Apostol}).
This conjecture is still open now.

The above arguments suggest that the case of $b_0=1$ is more difficult.

\textbf{Acknowledgement:}  This work is partially supported by
National Natural Science Foundation of China (Grant No. 11101424
 and
 Grant No. 11071277).
\vskip 2cm

\end{document}